\tikzset{
    circ/.style={draw, circle,inner sep=0pt,minimum size=8mm, font=\scriptsize},
    triangle/.tip={Computer Modern Rightarrow[open,angle=120:3pt]}
}
\newtheorem{theorem}{Theorem}
\newtheorem{corollary}[theorem]{Corollary}
\newtheorem{lemma}[theorem]{Lemma}
\newtheorem{proposition}[theorem]{Proposition}
\newtheorem{remark}[theorem]{Remark}
\def\P{{\mathbb P}}
\newcommand{\E}{\mathbb{E}}
\newcommand{\N}{\mathcal{A}_N}
\newcommand{\W}{\mathcal{W}_N}
\newcommand{\st}{\mathcal{S}_N}
\newcommand{\auxst}{\tilde{\mathcal{S}}_N}
\newcommand{\ladder}{\mathcal{L}_N}
\newcommand{\nlist}{\vec{0}_N}
\title{\textbf{\Large{ Metastability in a Stochastic  System of \\ Spiking Neurons with Leakage}}}
\author{Kádmo de S. Laxa}
\date{November 16, 2022}
\begin{document}

\maketitle
\begin{abstract}
We consider a finite system of interacting point processes with memory of variable length modeling a finite but large network of spiking neurons with two different leakage mechanisms. Associated to each neuron there are two point processes, describing its successive spiking and leakage times. For each neuron, the rate of the spiking point process is an exponential function of its membrane potential, with the restriction that the rate takes the value $0$ when the membrane potential is $0$. At each spiking time, the membrane potential of the neuron resets to $0$, and simultaneously, the membrane potentials of the other neurons increase by one unit. 
The leakage can be modeled in two different ways. In the first way,
at each occurrence time of the leakage point process associated to a neuron, the membrane potential of that neuron is reset to $0$, with no effect on the other neurons.  In the second way, if the membrane potential of the neuron is strictly positive, at each occurrence time of the leakage point process associated to that neuron, its membrane potential decreases by one unit, with no effect on the other neurons. In both cases, the leakage point process of the neurons has constant rate.
For both models, we prove that the system has a metastable behavior as the population size diverges.
This means that the time at which the system gets trapped by the list of null membrane
potentials suitably re-scaled converges to a mean one exponential random time.
\end{abstract}

\vspace{0.2cm}
 
\textit{Keywords}: Neuronal networks, interacting point processes with memory of variable length, metastability. 
 
 \vspace{0.2cm}
 
 \textit{AMS MSC}: 60K35, 60G55,  	82C22.

\section{Introduction} \label{sec:intro}

We study a system of interacting point processes with memory of variable length modeling a finite but large network of spiking neurons with two different ways to model the leakage effect. We prove that when the population size diverges the system has a metastable behavior.

The system we consider can be informally described as follows.
Each neuron is associated to two point processes. The first point process indicates the successive spiking times of the neuron. The rate of this point process is an exponential function of the membrane potential of the neuron, with the restriction that the rate takes the value $0$ when the membrane potential is $0$. When a neuron spikes, its membrane potential resets to $0$, and simultaneously, the membrane potentials of the other neurons increase by one unit.

The second point process associated to each neuron indicates its successive leakage times. The leakage effect can be modeled in two different ways. In the first way, at each leakage time of the neuron, its membrane potential is reset to $0$, with no effect on the other neurons membrane potentials. In the second way, if the membrane potential of the neuron is strictly positive, at each leakage time of this neuron, its membrane potential decreases by one unit, with no effect on the other neurons membrane potentials. For both models, this point process has a fixed constant rate.

The first way to model the leakage effect was considered in  \cite{gl2} with the important difference that, besides considering only binary spiking rates, it also consider that the set of neurons is represented by the set of all integers, with each neuron interacting only with its two neighbors.

The second way to model the leakage effect was considered in \cite{amarcos} with the important difference that, besides considering only binary spiking rates, it also only consider interaction graphs that are regular trees.
We thank an anonymous reviewer for suggest to consider also the second way to model the leakage effect.

Let us now informally present our results. 
For any initial configuration of membrane potentials, 
the number of spiking and leakage times of the system is finite. Moreover, 
the process gets trapped after a finite time in the configuration in which the membrane potentials of all neurons are $0$. This is the content of Theorem \ref{teo: extinction} for the system with the first leakage mechanism and the content of Theorem \ref{teo: extinction2} for the system with the second leakage mechanism. 

Let us suppose that the system starts with a configuration in which a sufficiently large set of neurons have strictly positive membrane potential. With such a starting point, as the number of neurons of the system diverges, the system instantaneously reaches a set of configurations in which all neurons but one have strictly positive membrane potentials and these membrane potentials are all different. 
The system are in this set with probability approaching to $1$, for any instant before it gets trapped as the number of neurons of the system diverges. 
This is the content of Theorem \ref{massconcentration} for the system with the first leakage mechanism and the content of Theorem \ref{massconcentration2} for the system with the second leakage mechanism.

The system has a metastable behavior, namely the time at which it gets trapped in the null membrane potentials configuration re-normalized by its mean value converges in distribution to a mean $1$ exponential random time as the population size diverges. 
This is the content of Theorem \ref{metastable} for the system with the first leakage mechanism and the content of Theorem \ref{metastable2} for the system with the second leakage mechanism.
Theorems \ref{metastable} and \ref{metastable2} assume that the system starts with the same type of initial configuration considered in Theorems \ref{massconcentration} and \ref{massconcentration2}. This initial configuration condition prevents the system to be immediately  attracted by the null configuration.



To put our article in perspective, let us briefly recall some results recently published in other articles.
In article \cite{gl2} it was proven that there exists a critical value for the leakage rate such that the system has either one or two extremal invariant measures when the leakage rate is either greater or
smaller than the critical value, respectively. For the same model considered in article \cite{gl2},
it was proven by \cite{mandre1} that for a finite system with a sufficiently small leakage rate, the system displays a \text{metastable} behavior when the number of the neurons diverges (see also \cite{mandre2} and \cite{mandre3}). 


In article \cite{amarcos} it was proven that there exist two critical values for the leakage rate such that the system exhibit three different behaviors. In the first case, each fixed neuron has positive probability to spike infinitely many times. In the second case, the system has a positive probability of never goes extinct but each neuron eventually stops spiking. In the third case, the neural spiking activity goes extinct with probability one.

Both models considered here belong to the class of systems of interacting point process with memory of variable length that was introduced in discrete time 
by \cite{glmodel} 
and in continuous time by 
\cite{gl4} to model systems of spiking neurons.
The metastable behavior of systems of interacting point processes with memory of variable length was also analyzed by \cite{taille,evamonm} and \cite{galveslaxa}. Other aspects of systems of interacting point processes with memory of variable length in this class of models was considered in several articles, including \cite{ ostduarte2,lud,ostduarte1, evafour,glarticle, karina,baccelli1,ostduarte3,gl3,baccelli2,amarcos,baccelli3,lebow} and \cite{desantis}. For a self-contained and neurobiological motivated presentation of this class of variable length memory models for system of spiking neurons, both
in discrete and continuous time, we refer the reader to \cite{gb}.
 
The notion of metastability considered here is inspired by the so called \textit{pathwise approach to metastability}
introduced by \cite{cassandro}. 
For more references and an introduction to the topic, we refer the reader to \cite{metaest2,metaest3} and \cite{metaest1}.

This article is organized as follows. In Section \ref{cap:model} we present the definitions, basic and extra notation and state the main results.
In Section \ref{sec:teoextinction} we prove Theorem \ref{teo: extinction}.
In Section \ref{sec:auxiliary} 
we present a coupling construction and prove some auxiliary results. 
In Sections \ref{sec:teo1} and \ref{sec:teo2} we prove Theorems \ref{massconcentration} and \ref{metastable}, respectively. In Section \ref{sec:hatprocess} we prove Theorems \ref{teo: extinction2}, \ref{massconcentration2} and \ref{metastable2}.

\section{Definitions, notation and main results}
\label{cap:model}

Let $\N=\{1,2,...,N\}$ be the set of neurons, with $N \geq 2$ and denote
$$
\st=\big\{u=(u(a): a \in \N) \in \{0,1,2,\ldots\}^N:\min\{u(a):a \in \N\}=0 \big\}
$$
the set of lists of membrane potentials. 

We want to describe the time evolution of the list of membrane potentials of a system of spiking neurons.
To do this, for any neuron $a \in \N$, we define the maps $\pi^{a,*}$, $\pi^{a,\dagger}$ and $\hat{\pi}^{a,\dagger}$ on $\st$
as follows. For any $u\in \st$,  
$$
\pi^{a,*}(u)(b)=
\begin{cases}
u(b)+
1&\text{, if } b \neq a, \\ 
0 &\text{, if } b  = a,
\end{cases}
$$
$$
\pi^{a,\dagger}(u)(b)=
\begin{cases}
u(b) &\text{, if } b\neq a, \\
0 &\text{, if } b  = a,
\end{cases}
$$
$$
\hat{\pi}^{a,\dagger}(u)(b)=
\begin{cases}
u(b) &\text{, if } b\neq a, \\
u(b)-1 &\text{, if } b  = a \text{ and } u(b) \geq 1, \\
0 &\text{, if } b  = a \text{ and } u(b) =0.
\end{cases}
$$

The map $\pi^{a,*}$ represents the effect of a spike of neuron $a$ in the system. When we apply the map $\pi^{a,*}$, the membrane potential of neuron $a$ resets to $0$ and the membrane potentials of all the other neurons increase by one unit. 

The map $\pi^{a,\dagger}$ represents the leakage on the membrane potential of neuron $a$ following the first way to model the leakage effect. When we apply the map  $\pi^{a,\dagger}$, the membrane potential of neuron $a$ resets to $0$ and the membrane potentials of all the other neurons remain the same. 

The map $\hat{\pi}^{a,\dagger}$ represents the leakage on the membrane potential of neuron $a$ following the second way to model the leakage effect. When we apply the map  $\hat{\pi}^{a,\dagger}$, if neuron $a$ has a strictly positive membrane potential its membrane potential decreases by one unit and the membrane potentials of all the other neurons remain the same. 





The time evolution of the system of neurons with the first leakage mechanism can be described as follows.
Denote the initial list of membrane potentials  $U_0^{N,u}=u \in \st$. 
The list of membrane potentials $(U_t^{N,u})_{t\in [0,+\infty)}$ evolves as a Markov jump process taking values in the set $\st$
and with infinitesimal generator $\mathcal{G}$ defined as follows
\begin{equation*} 
\mathcal{G}f(u)= \sum_{b\in \N}
e^{  u(b)}\mathbf{1}\{u(b)>0\}\left[f(\pi^{b,*}(u))-f(u)\right]+\sum_{b\in \N}\left[f(\pi^{b,\dagger}(u))-f(u)\right],
\end{equation*}
for any  bounded function $f:\st \to \mathbb{R}$.

The time evolution of the system of neurons with the second leakage mechanism can be described as follows.
The list of membrane potentials $(\hat{U}_t^{N,u})_{t\in [0,+\infty)}$ evolves as a Markov jump process taking values in the set $\st$ with initial list $u \in \st$
and with infinitesimal generator $\hat{\mathcal{G}}$ defined as follows
\begin{equation*} 
\hat{\mathcal{G}}f(u)= \sum_{b\in \N}
e^{u(b)}\mathbf{1}\{u(b)>0\}\left[f(\pi^{b,*}(u))-f(u)\right]+\sum_{b\in \N}\left[f(\hat{\pi}^{b,\dagger}(u))-f(u)\right],
\end{equation*}
for any  bounded function $f:\st \to \mathbb{R}$.

Observe that the null list $\nlist \in \st$,  defined as
$$
\nlist(a)=0, \text{ for any } a \in \N
$$
is a trap for both processes.
The goal of this article is to study the time the process takes to get trapped and its behavior before get trapped. 

To state our main results, we need to introduce some notation.
Let 
$$
\tau^{N,u}= \inf\{t>0: U_t^{N,u} = \nlist\}
$$
and define $\mathcal{N}^{N,u}$ as the number of spikes and leakages of membrane potential of the process, namely
$$
\mathcal{N}^{N,u}=\left\lvert\left\{s \in (0,\tau^{N,u}]:U_s^{N,u}\neq \lim_{t \to s^-}U_t^{N,u}\right\}\right\rvert.
$$
Analogously, let
$$
\hat{\tau}^{N,u}= \inf\{t>0: \hat{U}_t^{N,u} = \nlist\}
$$
and
$$
\hat{\mathcal{N}}
^{N,u}=\left\lvert\left\{s \in (0,\hat{\tau}^{N,u}]:\hat{U}_s^{N,u}\neq \lim_{t \to s^-}\hat{U}_t^{N,u}\right\}\right\rvert.
$$
We consider also the set
$$
S_N^{(0)}=\{u \in \st: \lvert\{a \in \N: u(a)>0\}\rvert \geq \lfloor N^{1/2} \rfloor \}
$$
and the set
$$
\W=
\left\{u \in \st: I_N\subset \{u(a): a \in \N\}
,\ \bigcap_{a \in \N} \bigcap_{b \neq a} \{u(a) \neq u(b)\}
\right\},
$$
where $I_N=\left\{1,\ldots, N - \lfloor N^{1/2} \rfloor \right\}$.

$S_N^{(0)}$ and $\W$ are the sets described in the informal presentation of Theorems \ref{massconcentration} and \ref{massconcentration2}. $S_N^{(0)}$ is the set of configurations in which a sufficiently large set of neurons have strictly positive membrane potential. This sufficiently large set of neurons has size greater or equal $\lfloor N^{1/2} \rfloor$. $\W$ is the set in which the process is with probability $1$ as $N\to +\infty$ before it gets trapped.

Informally, starting from $S_N^{(0)}$, with probability $1$ as $N\to +\infty$ the process will have a sequence of spikes and the difference between the membrane potential of the neuron which spikes and the greatest membrane potential at each time will be small. More specifically, this difference will be at most $N^{1/2}$. This sequence of spikes will lead the process to a situation in which all neurons have different membrane potentials and the membrane potential of the neurons forms a part of a ladder. This means that there is one neuron with membrane potential equal to $1$, one neuron with membrane potential equal to $2$ and so on until one neuron with membrane potential equal to $N-\lfloor N^{1/2} \rfloor$. This is exactly the configurations contained in $\W$.

We can now state our main results.

\begin{theorem} \label{teo: extinction}
For any $N \geq 2$ and for any initial list $u \in \st$, it follows that
$$
\P(\mathcal{N}^{N,u}<+\infty)=1
$$
and
$$
\P(\tau^{N,u}<+\infty)=1.
$$
\end{theorem}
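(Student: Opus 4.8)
The plan is to show that the process almost surely makes only finitely many jumps and, consequently, reaches the trap $\nlist$ in finite time. The natural quantity to track is the total membrane potential $\Sigma_t := \sum_{a \in \N} U_t^{N,u}(a)$, or rather a suitable Lyapunov-type modification of it. First I would observe that a leakage event of type $\pi^{b,\dagger}$ strictly decreases $\Sigma$ (by $U_t^{N,u}(b)$, which is positive whenever the event is ``effective'', i.e. whenever the firing-rate indicator structure allows progress) and never increases it, while a spike of neuron $b$ changes $\Sigma$ by $(N-1) - U_t^{N,u}(b)$, which can be positive. So $\Sigma$ alone is not monotone; the key structural fact to exploit is that a neuron spikes at rate $e^{u(b)}\mathbf 1\{u(b)>0\}$, so large membrane potentials spike very fast, whereas leakage always occurs at rate $1$ per neuron, hence at total rate $N$.

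The main step is a stability/return argument. I would argue that from any configuration, after a bounded-in-expectation number of steps the process returns to the set of configurations with total potential below some fixed threshold, because whenever some $u(b)$ is large the spike of $b$ (which happens essentially immediately) can raise $\Sigma$ by at most $N-1$, but it also resets $u(b)$ to $0$, and the leakage mechanism, acting at rate $N$, steadily erodes $\Sigma$; more precisely I would bound the probability that, between consecutive ``low-$\Sigma$'' visits, the process fires rather than leaks, and show the embedded chain on the finite set $\{u : \Sigma(u) \le C\}$ is an absorbing chain with $\nlist$ the unique absorbing state reachable from everywhere. Concretely: from any $u \neq \nlist$, there is a positive-probability finite sequence of jumps (fire down the high neurons, then apply leakage $\pi^{a,\dagger}$ to each neuron in turn) leading to $\nlist$, and the length of this sequence is bounded by a function of $\Sigma(u)$ and $N$ only. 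Combining a uniform lower bound on this hitting probability from the (finite) low-$\Sigma$ set with the return estimate gives $\P(\mathcal N^{N,u} < \infty) = 1$.

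For the second assertion, once $\mathcal N^{N,u} < \infty$ a.s., on the event $\{\mathcal N^{N,u} = n\}$ the trapping time $\tau^{N,u}$ is a finite sum of $n$ holding times, each of which is an exponential random variable with a (strictly positive, state-dependent) rate; since $n < \infty$ a.s. and each summand is a.s. finite, $\tau^{N,u} < \infty$ a.s. One only needs that no holding rate is zero before absorption, which holds because at any $u \neq \nlist$ the total leakage rate is $N > 0$, so the process cannot be ``stuck'' at a non-trap state.

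The hard part will be making the return-to-bounded-$\Sigma$ estimate quantitative and uniform enough: a priori $\Sigma$ can grow if many spikes happen in quick succession before leakage catches up, and one must rule out a transient escape to infinity. I would handle this by a careful comparison of the spike and leakage mechanisms — e.g. showing that the number of neurons with potential exceeding any level $k$ is itself a supermartingale-like quantity because such neurons spike at rate $\ge e^k$ and are thereby removed from that level faster than they can be replenished (each spike promotes at most all $N-1$ other neurons, but does so at a controlled rate), so the process is tight and the Lyapunov function $V(u) = \sum_a (\text{something growing in } u(a))$ has negative drift outside a finite set. This is the standard Foster–Lyapunov route, and the only genuine subtlety is choosing $V$ so that the positive contribution of a spike (at most $N-1$ per unit of $V$ spread over other coordinates) is dominated by the negative contribution extracted from the high coordinate that just fired.
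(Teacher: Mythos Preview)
Your plan is a genuine route (Foster--Lyapunov with return to a bounded $\Sigma$-set, then a finite-state absorption argument), but it is considerably heavier than what the problem requires, and the hard step you flag --- actually producing a function $V$ whose drift is negative outside a bounded set --- is not carried out. Until that is done the argument is incomplete: the spike map can raise $\Sigma$ by $N-1$, and the candidate $V$'s you hint at are not shown to satisfy a drift inequality.

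The paper bypasses all of this by observing that no return-to-a-finite-set estimate is needed, because there is a \emph{uniform-in-state} lower bound on the probability of hitting $\nlist$ in a fixed number $2(N-1)$ of jumps. The mechanism is: from any $u\neq\nlist$, the neuron with the largest membrane potential spikes next with probability at least $\tfrac{1}{2(N-1)}$ (this is where the exponential rate helps --- the maximum always dominates the total rate to within a factor depending only on $N$, no matter how large the potentials are). If this happens $N-1$ times in a row the process lands in a ladder configuration $\ladder$; from $\ladder$ there is a fixed positive probability $\epsilon'$ (depending only on $N$) of $N-1$ consecutive leakages reaching $\nlist$. Hence $\P(\mathcal{N}^{N,u}\le n+2(N-1)\mid U^{N,u}_{T_n}=u')\ge [2(N-1)]^{-(N-1)}\epsilon'$ uniformly in $u'$, and a geometric bound on $\mathcal{N}^{N,u}$ follows immediately. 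The point your approach misses is that the ``spike-the-maximum'' event has probability bounded below \emph{independently of how large the current potentials are}, so one never has to control $\Sigma$ or prove tightness.

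Your argument for $\tau^{N,u}<\infty$ once $\mathcal{N}^{N,u}<\infty$ is fine and matches the paper's: the jump rate from any non-trap state is bounded below (by $e+1$, in fact), so the finitely many holding times are a.s.\ finite.
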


\begin{theorem} \label{massconcentration}
For any $t>0$, it follows that
$$
\inf_{u \in S_N^{(0)}}\P\left(U_{t }^{N,u}\in \W \ \vert \  \tau^{N,u}>t \right) \to 1, \text{ as } N \to +\infty.
$$
\end{theorem}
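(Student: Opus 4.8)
The plan is to fix $t>0$, to introduce the hitting time $\sigma^{N,u}=\inf\{s\ge0:U^{N,u}_s\in\W\}$, and to establish two facts, each uniform in the initial configuration $u$. The first, ``entering $\W$'', is that there is a deterministic $\delta_N\to0$ with $\inf_{u\in S_N^{(0)}}\P(\sigma^{N,u}\le\delta_N)\to1$; note that $\{U^{N,u}_{\sigma^{N,u}}\in\W\}$ entails $\tau^{N,u}>\sigma^{N,u}$ whenever $\sigma^{N,u}<\infty$, because a configuration in $\W$ is not $\nlist$ and $\nlist$ is a trap. The second, ``staying in $\W$'', is that for every fixed $s>0$ one has $\sup_{w\in\W}\P(U^{N,w}_s\notin\W,\ \tau^{N,w}>s)\to0$ and $\sup_{w\in\W}\P(\tau^{N,w}\le s)\to0$. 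Granting these, I would finish by writing $\P(U^{N,u}_t\notin\W\mid\tau^{N,u}>t)=\P(U^{N,u}_t\notin\W,\tau^{N,u}>t)/\P(\tau^{N,u}>t)$, bounding the numerator, through the strong Markov property at $\sigma^{N,u}$, by $\P(\sigma^{N,u}>\delta_N)+\sup_{w\in\W,\,s\in[t-\delta_N,t]}\P(U^{N,w}_s\notin\W,\tau^{N,w}>s)$, and bounding the denominator below by $\P(\sigma^{N,u}\le\delta_N)\,\inf_{w\in\W,\,s\in[t-\delta_N,t]}\P(\tau^{N,w}>s)$, again using $\{U^{N,w}_s\in\W\}\subseteq\{\tau^{N,w}>s\}$. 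Both facts then force the numerator to $0$ and the denominator to stay bounded away from $0$, and since $\nlist\notin S_N^{(0)}$ the process is not trapped at time $0$.

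For ``staying in $\W$'' the key point is that $\W$ is almost invariant. If $w\in\W$ then $I_N\subseteq\{w(a):a\in\N\}$ forces the largest potential to be at least $N-\lfloor N^{1/2}\rfloor$, so the top neuron spikes at rate at least $e^{N-\lfloor N^{1/2}\rfloor}$, which dwarfs both the total leakage rate $N$ and the total rate $\sum_{j<N-\lfloor N^{1/2}\rfloor}e^{j}<e^{N-\lfloor N^{1/2}\rfloor}$ of ``deep'' spikes (spikes of neurons with potential below $N-\lfloor N^{1/2}\rfloor$). A spike of the top neuron, or of any neuron with potential above $N-\lfloor N^{1/2}\rfloor$, keeps the configuration in $\W$: the ladder below the spiker shifts up by one unit and absorbs the recycled neuron at $0$, distinctness is preserved, and the missing value so created lies above $N-\lfloor N^{1/2}\rfloor$. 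Hence the process can leave $\W$ only through a leakage (creating a missing value plus a double at $0$) or a deep spike (creating a missing value below $N-\lfloor N^{1/2}\rfloor$); in either case the defect amounts to a single missing value, plus one surplus neuron for a leakage, and each subsequent top spike carries it one position closer to the top of the sorted profile, so after $O(N)$ top spikes the missing value reaches the topmost gap and is erased — and then the surplus neuron, if any, climbs to the top and splits into the recycled neuron and a new top rung, returning the configuration to $\W$. A bootstrap at the first time more than $\log N$ defects coexist shows that with overwhelming probability this never occurs on $[0,t]$, so the top spike rate stays at least $e^{N-O(\log N)}$ and every excursion out of $\W$ lasts at most $\varepsilon_N:=e^{-N+O(\log N)}\to0$; consequently $\{U^{N,w}_t\notin\W,\ \tau^{N,w}>t\}$ forces a departure-causing event in $(t-\varepsilon_N,t]$, of probability at most $(N+e^{N-\lfloor N^{1/2}\rfloor})\,\varepsilon_N=e^{-\lfloor N^{1/2}\rfloor+O(\log N)}\to0$, while $\{\tau^{N,w}\le t\}$ would demand an atypically long run of leakages essentially uninterrupted by the much faster spikes, of probability also tending to $0$.

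For ``entering $\W$'' I would use the coupling construction and auxiliary estimates of Section \ref{sec:auxiliary}. From $u\in S_N^{(0)}$ there are at least $\lfloor N^{1/2}\rfloor$ positive neurons, so with probability tending to $1$ a spike occurs within a time tending to $0$; it brings the number of positive neurons to $N-1$ (the former zeros all jump to $1$) and sets the maximum climbing, and the total spike rate is then of order $Ne^{v}$ when most neurons have potential $v$, so the remaining $\approx N$ spikes of the burst take a further time of order $N^{-1}\sum_{v\ge1}e^{-v}=O(N^{-1})$; hence $\sigma^{N,u}\le\delta_N$ with probability tending to $1$ for any $\delta_N\to0$ chosen slowly enough. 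The auxiliary results should then show that, with probability tending to $1$, every neuron spiking during the burst has potential within $\lfloor N^{1/2}\rfloor$ of the current maximum — the total rate of deeper spikes being at most $N\,e^{(\text{maximum})-\lfloor N^{1/2}\rfloor}$, negligible beside the top rate — so each spike perturbs only the uppermost $\lfloor N^{1/2}\rfloor$ potentials, while the shifts from the other spikes together with the recycled neurons progressively fill in the ladder $1,2,\dots,N-\lfloor N^{1/2}\rfloor$; the $\lfloor N^{1/2}\rfloor$ units of slack in the definition of $\W$ are precisely what accommodates the residual disorder near the top. I would also check that the $O(1)$ leakages occurring during the burst do not obstruct this and that the burst cannot end in $\nlist$ — in the least favourable configuration that would require all positive neurons to leak consecutively before any of them spikes, of probability at most $(1+e)^{-\lfloor N^{1/2}\rfloor}\to0$.

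I expect ``entering $\W$'' to be the main obstacle: it asserts that a fast, strongly interacting jump process on $\{0,1,2,\dots\}^{N}$ sorts about $N$ coordinates into the exact ladder pattern defining $\W$ within a vanishing time, uniformly over the large family $S_N^{(0)}$, and the delicate part is the bookkeeping of how the ladder is assembled rung by rung while the disorder stays confined to the uppermost $\lfloor N^{1/2}\rfloor$ coordinates — uniformly in the starting point. The coupling of Section \ref{sec:auxiliary} is presumably built to make exactly this tracking feasible, for instance by dominating the configuration by a more regular, autonomous reference process whose sorted profile can be analysed directly.
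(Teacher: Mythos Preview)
Your two-step strategy (enter $\W$ fast, then stay there) matches the paper's, but the mechanism you propose for ``staying in $\W$'' is quite different. The paper does not track excursions or defects at all. Instead it introduces an auxiliary process $(\tilde U_t^{N,u})$ on $\auxst=\st\setminus\{\nlist\}$, identical to $U$ except forbidden to jump to the trap, so that conditioning on $\{\tau^{N,u}>t\}$ amounts to looking at $\tilde U$ (Remark~\ref{remark1}); it proves $\tilde U$ is ergodic, shows its invariant measure $\mu^N$ satisfies $\mu^N(\W)\to1$ via a chain of sets $S_N^{(1)}\supset S_N^{(2)}\supset S_N^{(3)}\supset\W$ (Lemma~\ref{massconcentration1}), and then transfers this equilibrium statement to a fixed-time statement $\P(\tilde U_s^{N,l}\notin\W)\to0$ for $l\in\ladder$ by the two-copy coupling of Section~\ref{sec:auxiliary} (Lemma~\ref{propcoupling}). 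So that coupling is used for ``staying'', not for ``entering'' as you surmise, and its role is to compare two evolutions started at different points of $\W$ so as to pull the invariant-measure bound down to finite time. For ``entering'', the paper reuses the same chain of sets (in fact driving the process all the way into $\ladder$, Corollary~\ref{coro3}); your burst picture is the right heuristic for why those sets are reached in turn.

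Your direct excursion analysis could in principle be made to work, but it is more delicate than the sketch acknowledges. The departure rate from $\W$ is of order $e^{N-\lfloor N^{1/2}\rfloor}$ (dominated by deep spikes, not leakages), so on $[0,t]$ there are order $t\,e^{N-\lfloor N^{1/2}\rfloor}$ excursions, and a union bound needs each to be short with failure probability $o(e^{-(N-\lfloor N^{1/2}\rfloor)})$; your ``bootstrap at $\log N$ defects'' would have to deliver exactly that. Note also that the duplicate-at-$0$ created by a leakage does not dissolve until it has climbed to the top of the sorted profile and one copy spikes, so repair genuinely costs $\Theta(N)$ top spikes during which further defects can arrive. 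The invariant-measure route sidesteps all this bookkeeping: once $\mu^N(\W)\to1$ is established, no excursion control is needed.
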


\begin{theorem} \label{metastable}
For any sequence $(u_N \in S_N^{(0)}: N \geq 2)$,
$$
\frac{\tau^{N,u_N}}{\mathbb{E}[\tau^{N,u_N}]}\to \text{Exp}(1) \text{ in distribution, as } N \to +\infty,
$$
where $\text{Exp}(1)$ denotes a mean $1$ exponential distributed random variable.

\end{theorem}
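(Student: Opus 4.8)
The plan is to follow the pathwise approach to metastability: reduce the statement to an asymptotic lack-of-memory property of the absorption time together with a uniform exponential tail bound, and then conclude by a soft argument. Set $F_N(t)=\P(\tau^{N,u_N}>t\,\E[\tau^{N,u_N}])$ for $t\ge 0$, and note $F_N(0)=1$ since $u_N\in S_N^{(0)}$ is not the null list. I will establish three facts: (A) $\E[\tau^{N,u_N}]\to\infty$; (B) a tail bound $F_N(t)\le C e^{-ct}$ for all large $N$, with $c,C>0$ independent of $N$; and (C) asymptotic multiplicativity, $F_N(t+s)-F_N(s)\,F_N(t)\to 0$ for all fixed $s,t\ge 0$. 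Granting these, Helly's selection theorem yields, along any subsequence, a further subsequence along which $F_{N_k}$ converges at continuity points to a non-increasing right-continuous $F$ with $F(0)=1$; by (C) this $F$ solves $F(t+s)=F(s)\,F(t)$, hence $F(t)=e^{-\lambda t}$ for some $\lambda\in[0,\infty]$; and since $\int_0^\infty F_N(t)\,dt=\E[\tau^{N,u_N}/\E[\tau^{N,u_N}]]=1$ for every $N$, bound (B) allows passage to the limit by dominated convergence, giving $\int_0^\infty F=1$ and therefore $\lambda=1$. As the limit is independent of the subsequence, $\tau^{N,u_N}/\E[\tau^{N,u_N}]\to\mathrm{Exp}(1)$ in distribution. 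This final reduction is the general exponential-approximation mechanism used throughout this circle of problems; see, e.g., \cite{galveslaxa} and \cite{mandre1}.

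The two structural inputs are Theorem \ref{massconcentration} and the coupling constructed in Section \ref{sec:auxiliary}. Observe first that $\W\subset S_N^{(0)}$ for all $N\ge 2$: a configuration in $\W$ has exactly $N-1$ neurons with strictly positive potential (the ladder $I_N$ together with $\lfloor N^{1/2}\rfloor-1$ high values, and one null neuron), and $N-1\ge\lfloor N^{1/2}\rfloor$. Hence Theorem \ref{massconcentration} applies verbatim to initial conditions lying in $\W$. The crucial step is a loss-of-memory estimate from $\W$: I will exhibit a sequence $r_N\to\infty$ with $r_N=o(\E[\tau^{N,u_N}])$ such that, for any two initial configurations $u,v\in\W$, the graphical construction of Section \ref{sec:auxiliary} couples the two copies of the process, each conditioned to survive up to time $r_N$, so that they coincide as $\st$-valued trajectories from time $r_N$ on with probability tending to $1$; moreover, conditionally on survival, the process lies in $\W$ at any such (possibly $N$-dependent) time with probability tending to $1$, strengthening Theorem \ref{massconcentration} from fixed to growing times. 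The mechanism is that from $\W$ the lowest nonzero neuron spikes at rate at least $e$, so on the relevant time scale the process performs many spikes which merely ``rotate'' the ladder $\{1,\dots,N-\lfloor N^{1/2}\rfloor\}$; under the coupling the identities of the $\lfloor N^{1/2}\rfloor-1$ high neurons and of the unique null neuron get synchronized, and one checks this happens before any extinction-type cascade, the elapsed time $r_N$ being negligible against $\E[\tau^{N,u_N}]$. In particular $\P(\tau^{N,v}>t\,\E[\tau^{N,u_N}])$ is the same, up to $o(1)$, for every $v\in\W$, and equals $F_N(t)+o(1)$, since starting from $u_N\in S_N^{(0)}$ the process enters $\W$ within time $r_N$ while surviving, with probability $1-o(1)$.

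Granting the loss-of-memory estimate, (C) follows by conditioning at the deterministic time $s\,\E[\tau^{N,u_N}]$: by the Markov property, writing $h_N(v)=\P(\tau^{N,v}>t\,\E[\tau^{N,u_N}])$,
\begin{equation*}
F_N(t+s)=\E\!\left[\mathbf{1}\{\tau^{N,u_N}>s\,\E[\tau^{N,u_N}]\}\;h_N\!\big(U^{N,u_N}_{s\,\E[\tau^{N,u_N}]}\big)\right];
\end{equation*}
on the survival event $U^{N,u_N}_{s\,\E[\tau^{N,u_N}]}\in\W$ with probability $1-o(1)$, where $h_N\equiv F_N(t)+o(1)$ uniformly on $\W$, so $F_N(t+s)=F_N(s)\,F_N(t)+o(1)$. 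For (B), partition $[0,t\,\E[\tau^{N,u_N}]]$ into of order $t$ blocks of length $\E[\tau^{N,u_N}]$; conditionally on reaching a block, the process is in $\W$ at its start with probability $1-o(1)$ and is then absorbed within that block with probability bounded below by a constant $c_0>0$ — otherwise, iterating and using the loss-of-memory identification would force $\E[\tau^{N,u_N}/\E[\tau^{N,u_N}]]$ to exceed any fixed constant, contradicting its value $1$ — so each per-block survival probability is at most $1-c_0+o(1)$, and multiplying them yields $F_N(t)\le C e^{-ct}$ for $N$ large. Finally, (A) is obtained from the same block estimate (once in $\W$, the process survives one block with probability bounded away from $0$) together with the mass-concentration bound, and is in any case needed only to make the choice $r_N\to\infty$, $r_N=o(\E[\tau^{N,u_N}])$ consistent, so it is verified directly along the way.

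I expect the main obstacle to be the loss-of-memory/coupling estimate from $\W$: constructing, via the graphical representation of Section \ref{sec:auxiliary}, a coupling of two survival-conditioned copies started at different configurations of $\W$, and proving coalescence within $o(\E[\tau^{N,u_N}])$ time with probability tending to $1$, all while controlling the rare extinction cascades carefully enough that conditioning on survival does not distort the coupling. Everything downstream — the functional-equation bookkeeping, the block tail bound, the passage to the exponential law, and the control of $\E[\tau^{N,u_N}]$ — is routine once this coupling is available, and parallels the corresponding arguments for the related models in \cite{mandre1} and \cite{amarcos}.
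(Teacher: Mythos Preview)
Your overall architecture is the paper's: establish asymptotic loss of memory via the Markov property at the intermediate time (using Theorem~\ref{massconcentration}/Remark~\ref{remark3} to land in $\W$ and Proposition~\ref{unifconvergence} to make the tail from $\W$ start-independent), pair this with a uniform exponential tail, and conclude by the functional equation.

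The genuine gap is (A): your argument that $\E[\tau^{N,u_N}]\to\infty$ is circular. You deduce it from the block estimate underlying (B), but those blocks have length $\E[\tau^{N,u_N}]$, and the estimate already presupposes that this length dwarfs the time needed to reach $\W$ from an arbitrary state (of order $N^{1/2}$, cf.\ Corollary~\ref{coro2}). The paper breaks this circularity by working first with an auxiliary normalizer $c_N$ defined through $\P(\tau^{N,l}>c_N)=e^{-1}$ for $l\in\ladder$, and proving the concrete lower bound $c_N\ge (N-1+e^{N-2})/(N-1)^3$ (Proposition~\ref{cnbound}); the key observation is that from a ladder, extinction requires two leakages separated by fewer than $N-1$ spikes, an event whose rate is exponentially small in $N$. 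With $c_N\to\infty$ in hand the paper proves the exponential limit for $\tau^{N,l}/c_N$ (Proposition~\ref{explim2}), then obtains the uniform tail via $\sup_{u\neq\nlist}\P(\tau^{N,u}/c_N>1)\le\alpha<1$ (Lemmas~\ref{lemmasup}--\ref{prop7}, which again use $N^{1/2}=o(c_N)$), and only at the very end shows $\E[\tau^{N,l_N}]/c_N\to 1$ by dominated convergence. A secondary issue: the coupling is driven by the \emph{highest}-potential neuron (rate $\ge e^{N-1}$), so coalescence from $\W$ happens within time $e^{-(N-N^{1/2})}\to 0$ (Corollary~\ref{coro}), not on a growing scale $r_N\to\infty$; Proposition~\ref{unifconvergence} already delivers $\sup_{t\ge0}\sup_{w,w'\in\W}|\P(\tau^{N,w}>t)-\P(\tau^{N,w'}>t)|\to 0$ unconditionally, with no need to condition on survival.
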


\begin{theorem} \label{teo: extinction2}
For any $N \geq 2$ and for any initial list $u \in \st$, it follows that
$$
\P(\hat{\mathcal{N}}^{N,u}<+\infty)=1
$$
and
$$
\P(\hat{\tau}^{N,u}<+\infty)=1.
$$
\end{theorem}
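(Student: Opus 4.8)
The plan is to follow the proof of Theorem~\ref{teo: extinction}, exploiting the fact that the only structural difference between the two models is the leakage term of the generator and that this term is harmless (indeed only helpful) for the Lyapunov argument. I would work with the height function $V(u)=\sum_{a\in\N}u(a)$ and prove a Foster--Lyapunov drift inequality: there is a finite set $F\subset\st$ with $\nlist\in F$ and a constant $b_1<\infty$ such that $\hat{\mathcal{G}}V(u)\le -1$ for every $u\notin F$ and $\hat{\mathcal{G}}V(u)\le b_1$ for every $u\in\st$. The verification rests on the identity
\begin{equation*}
\hat{\mathcal{G}}V(u)=\sum_{b\in\N:\,u(b)>0}e^{u(b)}\bigl(N-1-u(b)\bigr)\;-\;\bigl\lvert\{b\in\N:u(b)\ge 1\}\bigr\rvert ,
\end{equation*}
in which the sum over neurons with $u(b)\le N-1$ is bounded by a constant depending only on $N$, while every neuron with $u(b)\ge N$ contributes a term at most $-e^{u(b)}$; hence, as soon as $\max_{a}u(a)$ exceeds a threshold $C'$ depending only on $N$, the term coming from the neuron of maximal potential alone makes the whole expression at most $-1$, and one takes $F=\{u\in\st:\max_{a}u(a)\le C'\}$. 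The reason this transfers from the first model is that the leakage contribution here, $\sum_{b}\bigl[V(\hat{\pi}^{b,\dagger}(u))-V(u)\bigr]=-\lvert\{b:u(b)\ge 1\}\rvert$, is nonpositive, exactly as the leakage contribution $-\sum_{b}u(b)$ of the first model is; the negative drift outside $F$ comes entirely from the (identical) spiking part.

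From this drift condition the standard machinery for Markov jump processes yields two things: first, $(\hat{U}_t^{N,u})_{t\ge 0}$ is non-explosive; second, writing $\sigma_F=\inf\{t\ge 0:\hat{U}_t^{N,u}\in F\}$, one has $\E_{u}[\sigma_F]\le V(u)<\infty$, so the process reaches $F$ in finite time a.s., and by the strong Markov property it does so again after any excursion away from $F$. I would then show that from every $u\in F$ the process is absorbed at $\nlist$ within a bounded number of jumps with probability bounded below: since each neuron leaks at rate $1$ at every instant, the path that performs only leakage events, one neuron at a time, until each coordinate has been driven down to $0$ has length exactly $\sum_{a}u(a)\le NC'$ and, because leakages only decrease potentials, stays in $F$ throughout and keeps the total jump rate below $Ne^{C'}+N$; hence its probability is at least $(Ne^{C'}+N)^{-NC'}=:\delta>0$, uniformly in $u\in F$. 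This is the one place the second leakage mechanism is used differently from the first --- the path now spends one jump per unit of potential instead of resetting a coordinate --- but it still ends at $\nlist$.

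Finally I would splice the two ingredients together in the usual way: partition time into successive rounds, each consisting of the (a.s.\ finite) return time to $F$ followed by a block of $\lceil NC'\rceil$ jumps; by the strong Markov property each round terminates in absorption with probability at least $\delta$, independently of the past, so the number of rounds before absorption is stochastically dominated by a geometric variable and is therefore a.s.\ finite. Each round contains finitely many jumps (non-explosion together with $\sigma_F<\infty$) and takes finite time, which gives $\hat{\tau}^{N,u}<\infty$ a.s.; then $\hat{\mathcal{N}}^{N,u}<\infty$ a.s.\ follows because a non-explosive process has only finitely many jumps in the finite interval $[0,\hat{\tau}^{N,u}]$. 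I do not expect a genuine obstacle here: the only delicate points are the drift computation above and the bookkeeping in this last step needed to upgrade a uniform lower bound on the absorption probability into almost-sure absorption rather than merely positive-probability absorption.
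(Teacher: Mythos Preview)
Your argument is correct, but it is \emph{not} the paper's argument; in particular your opening line is misleading, because the paper's proof of Theorem~\ref{teo: extinction} does not use a Lyapunov function at all.

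The paper proceeds purely combinatorially. Lemma~\ref{ladderreaching} shows that from \emph{any} $u\neq\nlist$ the event ``the neuron with maximal potential spikes'' has probability at least $(2(N-1))^{-1}$ at each jump, and $N-1$ such events in a row send the chain into $\ladder$. Hence from any state the chain lands in $\ladder$ after $N-1$ jumps with probability at least $(2(N-1))^{-(N-1)}$, uniformly in $u$. For the second leakage mechanism one then notes that from a ladder list the chain reaches $\nlist$ after exactly $N(N-1)/2$ consecutive leakages (one per unit of potential), which happens with some $\hat\epsilon>0$. A single geometric--trials argument on blocks of length $N-1+N(N-1)/2$ jumps finishes the proof; the bound is uniform in the starting state and yields directly that $\hat{\mathcal N}^{N,u}$ is stochastically dominated by a fixed multiple of a geometric variable.

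Your route---Foster--Lyapunov drift with $V(u)=\sum_a u(a)$ to a finite set $F$, then a pure--leakage path from $F$ to $\nlist$, then geometric trials over successive returns to $F$---works and is the standard recurrence machinery, and it would generalise more easily to other spiking rate functions satisfying a super-linear growth condition. What you lose is the uniform control: your return time to $F$ depends on $V(u)$, so you do not get the paper's uniform geometric tail for $\hat{\mathcal N}^{N,u}$, only a.s.\ finiteness. The paper's key observation---that the probability of the maximal-potential neuron firing next is bounded below \emph{independently of the current configuration}---is exactly what lets it bypass the Lyapunov step and work directly on the jump chain.
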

\begin{theorem} \label{massconcentration2}
For any $t>0$, it follows that
$$
\inf_{u \in S_N^{(0)}}\P\left(\hat{U}_{t }^{N,u}\in \W \ \vert \  \hat{\tau}^{N,u}>t \right) \to 1, \text{ as } N \to +\infty.
$$
\end{theorem}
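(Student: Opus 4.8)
The plan is to mirror the proof of Theorem~\ref{massconcentration}, since the spiking part of the dynamics --- the maps $\pi^{b,*}$ together with the rates $e^{u(b)}\mathbf{1}\{u(b)>0\}$ --- is literally the same for $\hat{U}^{N,u}$ and for $U^{N,u}$; the only change is that the leakage map $\pi^{b,\dagger}$ is replaced by the milder map $\hat{\pi}^{b,\dagger}$, which lowers one coordinate by a single unit instead of resetting it to $0$. A preliminary remark removes the conditioning: since $I_N\neq\emptyset$ for $N\geq2$ we have $\nlist\notin\W$, and as $\nlist$ is a trap, $\{\hat{U}_t^{N,u}\in\W\}\subseteq\{\hat{U}_t^{N,u}\neq\nlist\}=\{\hat{\tau}^{N,u}>t\}$; hence $\P(\hat{U}_t^{N,u}\in\W\mid\hat{\tau}^{N,u}>t)\geq\P(\hat{U}_t^{N,u}\in\W)$, and it suffices to prove $\inf_{u\in S_N^{(0)}}\P(\hat{U}_t^{N,u}\in\W)\to1$.

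The argument has two parts. \emph{(i) Fast entry into $\W$.} Using the coupling construction of Section~\ref{sec:auxiliary}, I would show that, uniformly over $u\in S_N^{(0)}$, with probability tending to $1$ the process reaches $\W$ before a time $\varepsilon_N\to0$, and hence without being trapped first. The mechanism is the one sketched after the statements: from a configuration with at least $\lfloor N^{1/2}\rfloor$ strictly positive coordinates, a short cascade of spikes by the currently maximal neuron spreads the coordinates out along the ladder $I_N$ while keeping all values distinct. The key point is that once the maximum $M$ exceeds $c\log N$ the maximal neuron spikes at rate $e^M\geq N^c$, which dominates the total leakage rate $N$ and also the combined spiking rate $\leq Ne^{M-\lfloor N^{1/2}\rfloor}$ of all neurons lying more than $\lfloor N^{1/2}\rfloor$ below the maximum; thus, with probability $1-o(1)$, neither a leakage nor a spike of such a ``low'' neuron interferes before the ladder is completed. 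One must also rule out early extinction, but from $\lfloor N^{1/2}\rfloor$ coordinates of value at least $1$ the probability that every one of them is driven to $0$ before any of them spikes is at most $(1+e)^{-\lfloor N^{1/2}\rfloor}\to0$; since replacing $\pi^{b,\dagger}$ by $\hat{\pi}^{b,\dagger}$ only makes leakage less harmful, this estimate is inherited from the proof of Theorem~\ref{massconcentration}.

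\emph{(ii) Occupation of $\W$ at the fixed time $t$.} Here I would show that, started from $\W$, the process stays in $\W$ up to time $t$ apart from short ``repair'' excursions. In any element of $\W$ the maximum is at least $N-1$, and a spike by the maximal neuron --- or by the top-of-ladder neuron, of value $N-\lfloor N^{1/2}\rfloor$ --- keeps the configuration in $\W$ (the contiguous ladder simply grows by one at the top while one of the few extra high values is consumed). So inside $\W$ the evolution is driven by these spikes, which fire at rate at least $e^{N-1}$, while the only events that can leave $\W$ are a leakage $\hat{\pi}^{b,\dagger}$ (total rate $N$) or a spike by a neuron of value at most $N-\lfloor N^{1/2}\rfloor-1$ (total rate $O(e^{N-\lfloor N^{1/2}\rfloor})$). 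Each such event creates a single defect --- a repeated value together with a displaced hole --- and each subsequent maximal-neuron spike carries the hole up by one unit and the repeated value toward the maximum, so that within real time $\delta_N=O(Ne^{-N})$ the hole has risen above $N-\lfloor N^{1/2}\rfloor$ and the repeated value has been resolved by a spike of one of the two coinciding neurons, returning the configuration to $\W$. Consequently the ``defect load'' $\rho_N:=\big(N+O(e^{N-\lfloor N^{1/2}\rfloor})\big)\,\delta_N=O(Ne^{-\lfloor N^{1/2}\rfloor})$ tends to $0$, so the process spends only a vanishing fraction of $[0,t]$ outside $\W$; more precisely, the event $\hat{U}_t^{N,u}\notin\W$ forces a disrupting event to have occurred in a window of length $O(\delta_N)$ ending at $t$ --- unless the process has not yet entered $\W$, or an unlikely pile-up of disruptions delayed a repair, both of $o(1)$ probability --- so $\P(\hat{U}_t^{N,u}\notin\W)=O(\rho_N)+o(1)\to0$ uniformly in $u\in S_N^{(0)}$. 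Combining (i) and (ii) gives $\inf_{u\in S_N^{(0)}}\P(\hat{U}_t^{N,u}\in\W)\to1$.

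The main obstacle is step (ii), specifically the combinatorial bookkeeping of how a defect travels and heals under the cascade dynamics. Unlike the reset-to-zero leakage of the first model, $\hat{\pi}^{b,\dagger}$ can plant a defect at an arbitrary height, so one must verify that \emph{every} such defect is absorbed on the fast cascade timescale --- in particular, that resolving a repeated value near the maximum does not create a fresh one --- and control, uniformly over the window $[0,t]$ and over all starting configurations in $S_N^{(0)}$, the event that several defects coexist or that a repair is delayed. Once this ``defects heal quickly'' estimate is established, passing to the limit is the same routine first-moment / Borel--Cantelli computation as in the proof of Theorem~\ref{massconcentration}.
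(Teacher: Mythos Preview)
Your approach is genuinely different from the paper's, and the difference is exactly at the step you flag as the obstacle.

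The paper does \emph{not} attempt a pathwise ``defects heal quickly'' argument for part~(ii). Instead it introduces an auxiliary process that has the same generator as $\hat{U}^{N,u}$ except that the transition to $\nlist$ is suppressed; this process is ergodic on $\auxst$ with invariant measure $\hat{\mu}^N$. The analogue of Lemma~\ref{massconcentration1} --- whose proof is the cascading-sets argument $S_N^{(1)}\supset S_N^{(2)}\supset S_N^{(3)}\supset\W$ you essentially reproduce in your part~(i) --- shows $\hat{\mu}^N(\W)\to1$. Then the analogue of Lemma~\ref{propcoupling} turns this static statement into the dynamic one you need: by the fast coupling of Section~\ref{sec:auxiliary}, starting from any $l\in\ladder$ the law at time $s$ is within $\delta(N,s)\to0$ of $\hat{\mu}^N$, so $\P(\hat{U}_s^{N,l}\notin\W)\leq \hat{\mu}^N(\W^c)/\hat{\mu}^N(\W)+\delta(N,s)$. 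This completely bypasses the defect bookkeeping: one never tracks how a hole or a repeated value travels, only that the invariant measure puts vanishing mass outside $\W$ and that the process forgets its starting point before time $s$.

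Your route could in principle be completed, but the combinatorics you worry about is real: under $\hat{\pi}^{b,\dagger}$ a leakage at height $k$ creates a repeated value at $k-1$ together with a hole at $k$, and you must show that the hole rises above $N-\lfloor N^{1/2}\rfloor$ \emph{and} the repeat is resolved before a second disruption occurs, uniformly over where the defect sits and over $[0,t]$. The invariant-measure trick buys you exactly the freedom not to carry this out. Your preliminary reduction $\P(\hat{U}_t\in\W\mid\hat{\tau}>t)\geq\P(\hat{U}_t\in\W)$ is correct and slightly slicker than the paper's handling of the conditioning, but it does not by itself simplify part~(ii); the paper instead uses the coincidence of $\hat{U}$ and the auxiliary process on $\{\hat{\tau}>t\}$ (Remark~\ref{remark1}) to move the whole question to the non-absorbing chain.
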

\begin{theorem} \label{metastable2}
For any sequence $(u_N \in S_N^{(0)}: N \geq 2)$,
$$
\frac{\hat{\tau}^{N,u_N}}{\mathbb{E}[\hat{\tau}^{N,u_N}]}\to \text{Exp}(1) \text{ in distribution, as } N \to +\infty,
$$
where $\text{Exp}(1)$ denotes a mean $1$ exponential distributed random variable.

\end{theorem}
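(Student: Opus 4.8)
The plan is to follow the pathwise approach to metastability (\cite{cassandro}), running exactly parallel to the proof of Theorem \ref{metastable} with the leakage map $\pi^{b,\dagger}$ systematically replaced by $\hat{\pi}^{b,\dagger}$; only the excursion estimates need to be redone. The strategy is to represent the absorption time $\hat{\tau}^{N,u_N}$ as a geometric sum of asymptotically independent excursion times and then to invoke the general lemma (the same one used for Theorem \ref{metastable}; see also \cite{metaest1,galveslaxa}) stating that such a normalised geometric sum, with success parameter $p_N\to 0$, converges in distribution to $\text{Exp}(1)$.

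First I would reduce the class of initial conditions. By Theorem \ref{teo: extinction2} the process is absorbed in finite time almost surely, and by Theorem \ref{massconcentration2}, for any fixed $t>0$ and any $u_N\in S_N^{(0)}$ one has $\hat{U}^{N,u_N}_t\in\W$ with conditional probability, given $\hat{\tau}^{N,u_N}>t$, tending to $1$. Applying the strong Markov property at time $t$ on this event shows that, up to an event of vanishing probability, $\hat{\tau}^{N,u_N}=t+\hat{\tau}^{N,V}$ with $V\in\W$; since the additive constant $t$ is negligible after normalisation, the theorem reduces to proving the convergence for an arbitrary family $(u_N\in\W)$.

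Next I would analyse the dynamics started from $\W$. A configuration in $\W$ has maximal potential $M\ge N-\lfloor N^{1/2}\rfloor$, so its total jump rate is of order $e^{M}$ and is dominated by the spike of the highest neuron; with overwhelming probability that spike occurs first, shifting the contiguous block $\{0,1,\ldots,M\}$ upward by one and refilling level $0$, so that the process is instantly back in $\W$. The process therefore performs a long run of such ``ladder shifts'', and it can only be absorbed through an atypical excursion in which, before the block is restored, leakage drives enough neurons down to $0$ without their having spiked. I would then take the successive returns to $\W$ as approximate regeneration times and establish: (i) the probability $p_N$ that the process is absorbed at $\nlist$ before its next return to $\W$ tends to $0$ and, in the limit, does not depend on the entrance configuration; (ii) the successive inter-return times are asymptotically independent, via the strong Markov property at entrances to $\W$, and satisfy the uniform moment bound required by the lemma; (iii) consequently the number of returns before absorption is geometric with parameter $p_N$, giving the geometric-sum representation. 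Feeding (i)--(iii) into the general lemma yields the claimed exponential limit.

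The main obstacle will be establishing (i)--(ii) for the second leakage mechanism. Unlike in Theorem \ref{metastable}, where one leakage event removes a neuron from the ladder entirely, here leakage only lowers a potential by one unit, so two neurons can collide on the same level and the route to $\nlist$ is combinatorially more delicate; one has to show that these collisions neither create a shortcut to the null configuration nor inflate the excursion lengths, so that $p_N$ is still exponentially small in $N$ and the excursion-time moments are still controlled. A cleaner route I would try first is to build, in the spirit of the coupling of Section \ref{sec:auxiliary}, a coupling of $\hat U$ with $U$ that is valid on the high-potential part of the configuration — where both leakage effects are negligible against spiking — and thereby transfer the excursion estimates already obtained for $U$ in the proof of Theorem \ref{metastable} directly to $\hat U$; if such a coupling can be made tight enough, Theorem \ref{metastable2} would reduce to Theorem \ref{metastable} with only routine bookkeeping.
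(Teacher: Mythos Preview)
Your proposal takes a genuinely different route from the paper, and it also misidentifies the method used for Theorem~\ref{metastable}. The paper does \emph{not} represent $\tau^{N,l}$ as a geometric sum of excursion times and does not invoke any ``general lemma'' about normalised geometric sums. Instead, it fixes $c_N$ by $\P(\tau^{N,l}>c_N)=e^{-1}$ and proves directly that the limit is memoryless: using Theorem~\ref{massconcentration} (the process, conditioned to be alive, sits in $\W$) together with Proposition~\ref{unifconvergence} (uniform closeness of survival probabilities over $\W$), one gets
\[
\Bigl|\P(\tau^{N,l}/c_N>s+t)-\P(\tau^{N,l}/c_N>s)\,\P(\tau^{N,l}/c_N>t)\Bigr|\to 0,
\]
which forces the exponential limit; dominated convergence (via Lemmas~\ref{lemmasup} and~\ref{prop7}) then replaces $c_N$ by the mean, and Corollary~\ref{coro3} handles general $u_N\in S_N^{(0)}$. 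For Theorem~\ref{metastable2} the paper simply reruns this argument for $\hat U$: the analogues of Proposition~\ref{unifconvergence} and Theorem~\ref{massconcentration} are Proposition~\ref{unifconvergence2} and Theorem~\ref{massconcentration2}, and every estimate used (bounds on the leakage rate by $N$, the events $E^{(j)}_{N,k}$, Proposition~\ref{cnbound}) depends only on the leakage rate, not on whether a leak resets to $0$ or decrements by $1$, so the proofs carry over verbatim.

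Your excursion/regeneration plan is not wrong in spirit, but it is both harder and looser than what is needed here. Two concrete difficulties: first, ``returns to $\W$'' are not regeneration times, since by Theorem~\ref{massconcentration2} the process is in $\W$ essentially all the time before absorption; you would need returns to a single state (e.g.\ a fixed ladder), which is exactly what the paper achieves through the coupling of Section~\ref{sec:auxiliary} rather than through an excursion decomposition. Second, you leave the key estimates (i)--(ii) open and propose a $\hat U$--$U$ coupling only as a hope. The paper sidesteps all of this: once you know the survival probability is asymptotically constant over $\W$ (Proposition~\ref{unifconvergence2}), the Markov property at time $c_N s$ plus mass concentration give the memoryless identity immediately, with no need to identify or control individual excursions.
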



In Sections \ref{sec:teoextinction}, \ref{sec:auxiliary}, \ref{sec:teo1} and \ref{sec:teo2} we prove Theorems \ref{teo: extinction}, \ref{massconcentration} and \ref{metastable} concerning the  process $(U_t^{N,u})_{t\in [0,+\infty)}$. In Section \ref{sec:hatprocess} we show how the proofs presented before can be modified in order to prove Theorems \ref{teo: extinction2}, \ref{massconcentration2} and \ref{metastable2} concerning the  process $(\hat{U}_t^{N,u})_{t\in [0,+\infty)}$.




To prove our results it is convenient to extend the notation introduced before.

\vspace{0.2cm}

\noindent \textbf{Extra notation}
\begin{itemize}
\item
Let $T_0=0$ and for $n =1,\ldots, \mathcal{N}^{N,u}$, let $T_n$ denote  the successive jumping times of the process $(U_t^{N,u})_{t\in [0,+\infty)}$, namely 
$$
T_n= \inf\left\{t>T_{n-1}:U_t^{N,u} \neq U^{N,u}_{T_{n-1}}\right\}.
$$

\item For $n =1,\ldots, \mathcal{N}^{N,u}$, we define $A_n \in \N$ and $O_n \in \{*,\dagger\}$ as the pair such that
$$
U_{T_n}^{N,u}=\pi^{A_n,O_n}\left(U^{N,u}_{T_{n-1}}\right).
$$



\item The leakage times are defined as $T_0^{\dagger}=0$ and for $n\geq 1$,
    $$
   T_n^{\dagger}=\inf\{T_{m}>T_{n-1}^{\dagger}:O_{m}=\dagger\}.
    $$
    
\item The spiking times are defined as $T_0^{*}=0$ and for $n\geq 1$,
    $$
    T_n^{*}=\inf\{T_{m}>T_{n-1}^{*}:O_{m}=*\}.
    $$
    
\item For any time interval $I \subset [0,+\infty)$, the counting measures indicating the number of leakage times  and spiking times that occurred during the time interval $I$ are defined as
$$
Z^{\dagger}(I)= \sum_{m=1}^{+\infty}\mathbf{1}\{T_m^{\dagger} \in I\} \ \ \ \ \text{ and } \ \ \ \ 
Z^{*}(I)= \sum_{m=1}^{+\infty}\mathbf{1}\{T_m^{*} \in I\}.
$$

\item 
For any $u \in \st$, we define $a_1^u, \ldots, a_N^u \in \N$ in the following way
$$
a_1^u\in \text{argmin}\{u(a):a \in \N\},
$$
$$
a_2^u \in \text{argmin}\{u(a):a \in \N\setminus\{a_1^u\}\},
$$
$$
...
$$
$$
a_N^u \in \text{argmin}\{u(a):a \in \N\setminus\{a_1^u,a_2^u,...,a_{N-1}^u\}\}.
$$
To avoid ambiguity, we use the following convention: if $u(a_j^u)=u(a_{j+1}^u)$, then $a_j^u <a_{j+1}^u$. 

$a_1^u$ is the neuron with smallest membrane potential in configuration $u$ (it will always satisfy $u(a_1^u)=0$) and so on until $a_N^u$, the neuron with greatest membrane potential in configuration $u$. Since it is possible to have neurons with the same membrane potential in configuration $u$, we need to define the convention above.

\item The set of ladder lists is defined as
$$
\ladder=\big\{u \in \st: \{u(a): a \in \N\}=\{0,1,...,N-1\}\big\}.
$$

 
 

\item Let $\sigma: \N \to \N$ be a bijective map. For any $u\in \st$, the permuted list $\sigma(u)\in \st$ is defined as
$$
\sigma (u)(a)=u(\sigma(a))  \text{, for all } a \in \N.
$$

\item For any $\lambda>0$,  $\xi^{\{\lambda\}}$ and $(\xi_j^{\{\lambda\}}: j=1,2,\ldots)$  will always be, respectively, a random variable exponentially distributed with mean $\lambda^{-1}$ and a sequence of independent  random variables exponentially distributed with mean $\lambda^{-1}$.

\end{itemize}


\section{Proof of Theorem \ref{teo: extinction}} \label{sec:teoextinction}

In this section we will prove Theorem \ref{teo: extinction}. To prove Theorem \ref{teo: extinction}, we first show that starting from any non null list, the process has a positive and bounded probability to reach the set of ladder lists after $N-1$ jumps of the process. This follows from the fact that a ladder list is obtained when for $N-1$ consecutive instants, there is the spiking of a neuron in the process and the neuron that spiked is the neuron with maximum membrane potential at each instant.
This is the content of Lemma \ref{ladderreaching}. 
With this Lemma we are able to prove Theorem \ref{teo: extinction}.

\begin{lemma} \label{ladderreaching}
For any $u \in \st \setminus \{\nlist\}$, it follows that
$$
\P(U_{T_{N-1}}^{N,u} \in \ladder) \geq \left(\frac{1}{2(N-1)}\right)^{N-1}.
$$
\end{lemma}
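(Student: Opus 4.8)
The plan is to exhibit one explicit sequence of $N-1$ jumps that deterministically produces a ladder list, and then to bound the probability of that particular trajectory from below by $\left(\tfrac{1}{2(N-1)}\right)^{N-1}$.

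First I would do the deterministic part. Given any $w \in \st \setminus \{\nlist\}$, let $a_N^w$ be a neuron attaining $\max\{w(a):a\in\N\}$ and look at the effect of applying $\pi^{a_N^w,*}$. Writing the sorted membrane potentials of a configuration as $0=v_1\le v_2\le\cdots\le v_N$, one checks that the sorted values $v_1',\ldots,v_N'$ of $\pi^{a_N^w,*}(w)$ satisfy $v_1'=0$ and $v_j'=v_{j-1}+1$ for $j=2,\ldots,N$: neuron $a_N^w$ resets to $0$, the remaining $N-1$ potentials (which as a multiset are $v_1,\ldots,v_{N-1}$) each increase by one, and since $v_1+1=1>0$ the value $0$ remains the strict minimum (so $\pi^{a_N^w,*}(w)\in\st$ again). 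Iterating this recursion $N-1$ times, starting from $u\in\st$ so that $v_1^{(0)}=0$, gives $v_j^{(N-1)}=j-1$ for every $j\in\{1,\ldots,N\}$, i.e.\ the configuration after these $N-1$ spikes has value set $\{0,1,\ldots,N-1\}$ and hence lies in $\ladder$. Along the way, after each spike the maximum equals (the previous second-largest value) $+1\ge 1$, so the spike of the current maximum is always an allowed transition (its rate $e^{\max}\mathbf{1}\{\max>0\}$ is positive) and $\nlist$ is never visited; in particular $\mathcal{N}^{N,u}\ge N-1$ on this event and $T_{N-1}$ is well defined there.

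Next I would bound the one-step probability in the embedded jump chain. From a non-null state $w$ with maximum value $v=w(a_N^w)\ge 1$, the total jump rate is $R(w)=\sum_{b\in\N}e^{w(b)}\mathbf{1}\{w(b)>0\}+N$, and the probability that the next jump is the spike of $a_N^w$ is $e^{v}/R(w)$. Because $w\in\st$ forces $\min_a w(a)=0$, at most $N-1$ neurons have positive potential, so $\sum_{b}e^{w(b)}\mathbf{1}\{w(b)>0\}\le (N-1)e^{v}$; moreover $N\le (N-1)e\le (N-1)e^{v}$ for every $N\ge 2$. Hence $R(w)\le 2(N-1)e^{v}$, and the one-step probability is at least $\tfrac{1}{2(N-1)}$, uniformly over all non-null states $w$ (and, if the top value is attained by several neurons, the probability that some maximizer spikes is only larger, so choosing $a_N^w$ is a legitimate lower bound).

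Finally I would combine the two pieces: by the Markov property of the embedded chain, the probability of the trajectory in which, at each of the first $N-1$ jumps, the current-maximum neuron spikes, factorizes as a product of $N-1$ one-step probabilities, each $\ge \tfrac{1}{2(N-1)}$; and this trajectory ends in $\ladder$ by the first step. Therefore $\P(U_{T_{N-1}}^{N,u}\in\ladder)\ge\left(\tfrac{1}{2(N-1)}\right)^{N-1}$. The only mildly delicate point is the bookkeeping in the deterministic step — verifying that $0$ stays the minimum at every stage so that the sorted values evolve as claimed and the maximum never drops to $0$; the probabilistic estimate is a routine ``winning the race'' bound once the constraint $\min_a w(a)=0$ coming from membership in $\st$ is used.
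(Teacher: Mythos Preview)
Your proof is correct and follows essentially the same approach as the paper: exhibit the event ``the current maximum spikes at each of the first $N-1$ jumps,'' show it forces a ladder, and bound each one-step probability by $\tfrac{1}{2(N-1)}$ via the Markov property. Your sorted-values recursion for the deterministic part and your explicit rate estimate $R(w)\le 2(N-1)e^{v}$ (using $\min_a w(a)=0$ and $N\le (N-1)e$) are slightly more detailed than the paper's corresponding arguments, but the strategy is identical.
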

\begin{proof}
For any initial list $u \in \st \setminus \{\nlist\}$, the occurrence of the event $\{A_1=a_N^{u},O_1=*\}$ implies that $U_{T_1}^{N,u} \in \ladder$ in the case $N=2$, and implies that
$$
a_1^{U_{T_1}^{N,u}}=0, \ a_2^{U_{T_1}^{N,u}}=1,\ a_j^{U_{T_1}^{N,u}}\geq 1 \text{, for } j=3,\ldots, N,
$$
in the case $N\geq 3$.
As a consequence, the occurrence of the event $$
\left\{A_1=a_N^{u},O_1=*, A_2=a_N^{U_{T_1}^{N,u}},O_2=*\right\}
$$ 
implies that implies that $U_{T_2}^{N,u} \in \ladder$ in the case $N=3$, and it implies that
$$
a_1^{U_{T_2}^{N,u}}=0, \ a_2^{U_{T_2}^{N,u}}=1,\ a_3^{U_{T_2}^{N,u}}=2, \ a_j^{U_{T_2}^{N,u}}\geq 2 \text{, for } j=4,\ldots, N.
$$ 
in the case $N\geq 4$. Iterating this, we conclude that
the occurrence of the event 
$$
\bigcap_{j=1}^{N-1}\{A_j=a_N^{U_{T_{j-1}}^{N,u}}, O_j=*\}
$$ 
implies that $U_{T_{N-1}}^{N,u} \in \ladder$.
Therefore,
\begin{equation} \label{eqlemma1}
\P(U_{T_{N-1}}^{N,u} \in \ladder) \geq \P\left(\bigcap_{j=1}^{N-1}\{A_j=a_N^{U_{T_{j-1}}^{N,u}}, O_j=*\}\right).
\end{equation}

The smallest value for
$$
\P(A_1=a_N^{u},O_1=*)
$$
is obtained for any initial list $u$ in which all neurons, except one, have membrane potential equal $1$. This implies that
$$
\inf\left\{\P(A_1=a_N^{v},O_1=*\ \vert \ U_0^{N,v}=v):v \in \st \setminus\{\nlist\}\right\}\geq \frac{1}{2(N-1)}.
$$
We conclude the proof by using Markov property and applying this lower bound $N-1$ times in Equation \eqref{eqlemma1}.
\end{proof}

\begin{proof}
We will now prove Theorem \ref{teo: extinction}.

For any $N\geq 2$ and for any $u,u' \in \st \setminus \{\nlist\}$, we have that
\begin{align*}
    &\P\Big(
    \mathcal{N}^{N,u}\leq n+2(N-1)\ \Big\vert\ U^{N,u}_{T_n}=u'\Big) \geq \\ 
    &\P\left(U_{T_{n+N-1}}^{N,u} \in \ladder \ \Big\vert \ U_{T_n}^{N,u}=u'\right)\P\left(\mathcal{N}^{N,u}\leq n+2(N-1)\ \Big\vert\ U_{T_{n+N-1}}^{N,u} \in \ladder\right).
\end{align*}
Using together the Markov property and Lemma \ref{ladderreaching}, we get 
$$
\P\left(U_{T_{n+N-1}}^{N,u} \in \ladder \ \Big\vert \ U_{T_n}^{N,u}=u'\right) \geq [2(N-1)]^{-(N-1)}.
$$
The invariance by permutation of the process implies that
$$
\P\left(\mathcal{N}^{N,u}\leq m+(N-1) \ \Big\vert\ U_{T_{m}}^{N,u}=l\right)=\P\left(\mathcal{N}^{N,u}\leq m+(N-1) \ \Big\vert\ U_{T_{m}}^{N,u}=l'\right),
$$
for any $l,l' \in \ladder$ and for any $m\geq 1$. Calling
$$
\epsilon'=\P\left(\mathcal{N}^{N,u}\leq n+2(N-1) \ \Big\vert\ U_{T_{n+N-1}}^{N,u}=l\right),
$$
we conclude that
$$
\P\left(\mathcal{N}^{N,u}\leq n+2(N-1) \ \Big\vert \ U_{T_{n+N-1}}^{N,u} \in \ladder\right) = \epsilon' >0. 
$$

Therefore, for any $u' \in \st$,
$$
\P\Big(\mathcal{N}^{N,u}\leq n+2(N-1)\ \Big\vert\ U^{N,u}_{T_n}=u'\Big) \geq [2(N-1)]^{-(N-1)}\epsilon'.
$$
The last inequality implies that for any $n\geq 1$,
$$
\P(\mathcal{N}^{N,u} \geq n) \leq \P\big(2(N-1) \times \text{Geom}\big([2(N-1)]^{-(N-1)}\epsilon'\big) \geq n\big),
$$
where $\text{Geom}(r)$ denotes a random variable with geometric distribution assuming values in $\{1,2,\ldots\}$ and with mean $1/r$. 
This implies that $\P(\mathcal{N}^{N,u}< +\infty)=1$, concluding the first part of the proof.

The jump rate of the process $(U_t^{N,u})_{t\in [0,+\infty)}$ satisfies
$$
\sum_{a \in \N}\mathbf{1}\{u'(a)>0\}(e^{ u'(a)}+1) \geq e+1,
$$
for any $u' \in \st \setminus \{\nlist\}$. Putting all this together we conclude that $\P(\tau^{N,u}< +\infty)=1$.

\end{proof}

\section{A coupling construction} \label{sec:auxiliary}

In this section we will prove the following proposition.

\begin{proposition} \label{unifconvergence}
The following holds
$$
\lim_{N \to +\infty}\sup_{t\geq 0}\sup_{w,w' \in \W}\left\lvert\P(\tau^{N,w}>t)-\P(\tau^{N,w'}>t) \right\rvert=0.
$$
\end{proposition}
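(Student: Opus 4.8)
The plan is to build, for every pair $w,w'\in\W$, a coupling of $(U^{N,w}_t)_{t\ge0}$ and $(U^{N,w'}_t)_{t\ge0}$ under which the two processes reach a common configuration before either is absorbed at $\nlist$, with probability tending to $1$ as $N\to\infty$ uniformly in $w,w'$; from that configuration on they will be driven by the same clocks, hence coincide, so their absorption times will differ only by the small random amount accumulated during the transient, and the statement will follow from a soft comparison. Throughout I use that, by the invariance of the dynamics under relabeling of the neurons, $\P(\tau^{N,w}>t)$ depends on $w$ only through the sorted multiset $\{w(a):a\in\N\}$. Relabeling, I may thus assume that $w$ and $w'$ assign the potentials $0,1,\dots,N-\lfloor N^{1/2}\rfloor$ to the same set $\mathcal L$ of $N-\lfloor N^{1/2}\rfloor+1$ ``low'' neurons and differ only on the remaining set $\mathcal H$ of $\lfloor N^{1/2}\rfloor-1$ ``high'' neurons, each of whose potentials exceeds $N-\lfloor N^{1/2}\rfloor$.

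The coupling resolves the high neurons first. As long as a high neuron has neither spiked nor leaked, its potential is at least $N-\lfloor N^{1/2}\rfloor+1$, so it spikes at rate at least $e^{\,N-\lfloor N^{1/2}\rfloor+1}$; hence, setting $\varepsilon_N:=\exp\!\big(-(N-\lfloor N^{1/2}\rfloor)/2\big)$, with probability tending to $1$ all $\lfloor N^{1/2}\rfloor-1$ high neurons of $w$ (and of $w'$) have spiked within time $\varepsilon_N$, no leakage has occurred in $[0,\varepsilon_N]$ (the total leakage rate is at most $N$ and $N\varepsilon_N\to0$), no high neuron has spiked twice, and — since reaching $\nlist$ requires at least one leakage — neither process has been absorbed. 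The point of this ordering is the elementary observation that the effect on the list of membrane potentials of \emph{any} spike, whether of a high or of a low neuron, is the same operation up to a permutation of the neurons: the spiking neuron drops to $0$ and every other potential increases by one. Using this, together with the tie-breaking convention defining $a_1^u,\dots,a_N^u$, one steers the two embedded jump chains so that, once the transient is over (at times $\theta_w\le\varepsilon_N$ and $\theta_{w'}\le\varepsilon_N$ respectively), both processes occupy one and the same configuration $C$. From $\theta_w$ and $\theta_{w'}$ on I drive $U^{N,w}$ and $U^{N,w'}$ by the same fresh clocks, so by the strong Markov property both follow the same trajectory of $U^{N,C}$; in particular $\tau^{N,w}=\theta_w+\rho$ and $\tau^{N,w'}=\theta_{w'}+\rho$ for the same $\rho$, whence $|\tau^{N,w}-\tau^{N,w'}|=|\theta_w-\theta_{w'}|\le\varepsilon_N$ on the event $G$ that all of the above occurs. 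By the estimates just indicated, $\sup_{w,w'\in\W}\P(G^c)\to0$.

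The last ingredient is a uniform bound on absorption over short intervals. The only transitions that produce $\nlist$ are leakages of a neuron which sits at potential $1$ while all others sit at $0$, so the instantaneous rate into $\nlist$ never exceeds $1$; hence $\P(\tau^{N,v}\le\varepsilon)\le 1-e^{-\varepsilon}\le\varepsilon$ for every $v\in\st$ and every $\varepsilon>0$, and by the Markov property $\P\big(s<\tau^{N,v}\le s+\varepsilon_N\big)\le\varepsilon_N$ for all $s\ge0$ and all $v\in\st$. Combining everything, for every $t\ge0$ and every $w,w'\in\W$,
\begin{align*}
\P(\tau^{N,w}>t)
&\le\P(G^c)+\P\big(\tau^{N,w}>t,\,G\big)
\le\P(G^c)+\P\big(\tau^{N,w'}>t-\varepsilon_N\big)\\
&=\P(G^c)+\P(\tau^{N,w'}>t)+\P\big(t-\varepsilon_N<\tau^{N,w'}\le t\big)
\le\P(G^c)+\P(\tau^{N,w'}>t)+\varepsilon_N ,
\end{align*}
and by symmetry the reverse inequality holds as well; therefore
\[
\sup_{t\ge0}\,\sup_{w,w'\in\W}\big|\P(\tau^{N,w}>t)-\P(\tau^{N,w'}>t)\big|\;\le\;\sup_{w,w'\in\W}\P(G^c)+\varepsilon_N\;\xrightarrow[N\to\infty]{}\;0 .
\]

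The main obstacle is the coupling of the second paragraph, namely forcing the two jump chains into the \emph{same} configuration: the high neurons of $w$ and of $w'$ carry different potentials, so they fire at different rates and in a different order, their firing times cannot be made to coincide, and ordinary ladder spikes occur in between and perturb the two configurations differently; making precise that the chains can nonetheless be steered to a common state requires a careful step-by-step bookkeeping of the resolution of the high neurons — in essence an induction on their number, exploiting that, because they are incremented along with the ladder, their rates dominate everything else at every step of the transient. The timing estimate and the absorption estimate, by contrast, are soft and rely only on bounds of the type already used for Theorem \ref{teo: extinction}.
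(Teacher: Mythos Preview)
Your overall plan—couple the two chains so they coincide after a short transient, then compare—matches the paper in spirit. The gap is exactly where you locate it: the second paragraph asserts that both chains can be steered to one and the same configuration $C$ once all high neurons have fired, but this is never carried out, and as literally stated it is false. Low-ladder neurons \emph{do} fire during the transient (their total spike rate in $\W$ is of order $e^{N-\lfloor N^{1/2}\rfloor}$, and $\varepsilon_N\, e^{N-\lfloor N^{1/2}\rfloor}\to\infty$), and once they do, two different spike sequences issued from $w$ and from $w'$ need not land on the same multiset of potentials. Your observation that ``the effect of any spike is the same up to a permutation'' concerns a single spike; it says nothing about composing two different sequences of spikes from two different initial lists. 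So the argument is incomplete at precisely the step that carries all the weight, and the promised ``careful step-by-step bookkeeping'' would have to supply a genuinely new idea, not just accounting.

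The paper closes this gap with an explicit \emph{simultaneous} coupling that pairs order statistics: the $j$-th-ranked neurons of the two lists share a common spiking clock at rate $e^{\min\{u(a_j^u),v(a_j^v)\}}$ and a common leakage clock, with an extra clock firing only the larger of the two. Under this coupling one shows (Lemmas~\ref{lemmaE1}--\ref{couplingtime}) that from any $w,w'\in\W$, with probability $1-o(1)$ both chains reach $\ladder$ before any leakage occurs; they are then permutations of one another and evolve identically thereafter. In particular $\tau^{N,w}=\tau^{N,w'}$ on the good event, giving directly
\[
\big|\P(\tau^{N,w}>t)-\P(\tau^{N,w'}>t)\big|\le\P\big(\mathcal N_C^N(w,w')>\mathcal N_\dagger^N(w,w')\big),
\]
with no $\varepsilon_N$ discrepancy and no need for your absorption-rate estimate (which is correct, though the leaking neuron need not sit at potential $1$: under $\pi^{a,\dagger}$ any positive value is reset to $0$). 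If you wish to keep your two-stage scheme, what you actually need is a direct proof that a single chain started from any $w\in\W$ hits $\ladder$ within time $\varepsilon_N$ with probability $1-o(1)$ uniformly in $w$; but establishing that is essentially the same work as the paper's coupling lemmas.
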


To prove Proposition \ref{unifconvergence}, we need to introduce a coupling construction of the processes $(U_t^{N,u'})_{t\in [0,+\infty)}$ and $(U_t^{N,v'})_{t\in [0,+\infty)}$ starting from two different lists $u',v' \in \st$. 

We want to describe the time evolution of  $(U_t^{N,u'},U_t^{N,v'})_{t\in [0,+\infty)}$.
To do this, for any index $j \in \{1,\ldots,N\}$, we define the maps $\pi^{j,\min}$, $\pi^{j,\max}$ and $\pi^{j,\dagger}$ on $\st^2$
as follows. For any $(u,v)\in \st^2$, 
$$
\pi^{j,\min}(u,v)=(\pi^{a_j^u,*}(u), \pi^{a_j^v,*}(v)),
$$
$$
\pi^{j,\max}(u,v)=
\begin{cases}
(\pi^{a_j^u,*}(u), v) &\text{, if } u(a_j^u)>v(a_j^v), \\
(u, \pi^{a_j^v,*}(v)) &\text{, if } v(a_j^v)>u(a_j^u),
\end{cases}
$$
$$
\pi^{j,\dagger}(u,v)=(\pi^{a_j^u,\dagger}(u), \pi^{a_j^v,\dagger}(v)).
$$

The map $\pi^{j,\min}(u,v)$ represents the simultaneous effect of a spike of neuron $a_j^u$ in the system $(U_t^{N,u'})_{t\geq0}$ and a spike of neuron $a_j^v$ in the system $(U_t^{N,v'})_{t\in [0,+\infty)}$. 

The map $\pi^{j,\max}(u,v)$ represents the effect of either a spike of neuron $a_j^u$ in the system $(U_t^{N,u'})_{t\geq0}$ in the case in which $u(a_j^u)>v(a_j^v)$, or a spike of neuron $a_j^v$ in the system $(U_t^{N,v'})_{t\in [0,+\infty)}$ in the case in which $v(a_j^v)>u(a_j^u)$.


The map $\pi^{j,\dagger}(u,v)$ represents the  simultaneous leakage effect on the membrane potential of neuron $a_j^u$ in the system $(U_t^{N,u'})_{t\geq0}$ and on the membrane potential of neuron $a_j^v$ in the system $(U_t^{N,v'})_{t\in [0,+\infty)}$. 


The pair of lists of membrane potentials $(U_t^{N,u'},U_t^{N,v'})_{t\in [0,+\infty)}$ evolves as a Markov jump process taking values in the set $\st^2$
and with infinitesimal generator $\mathcal{G}_C$ defined as follows
$$
\mathcal{G}_Cf(u,v)= \sum_{j=1}^N
e^{\vert u(a_j^u)-v(a_j^v)\vert}\mathbf{1}\{u(a_j^u)\neq v(a_j^v)\}\left[f(\pi^{j,\max}(u,v))-f(u,v)\right]+
$$
$$
\sum_{j=1}^N
e^{\min\{u(a_j^u),v(a_j^v)\}}\mathbf{1}\{\min\{u(a_j^u),v(a_j^v)\}>0\}\left[f(\pi^{j,\min}(u,v))-f(u,v)\right]+
$$
$$
\sum_{j=1}^N\left[f(\pi^{j,\dagger}(u,v))-f(u,v)\right],
$$
for any  bounded function $f:\st^2 \to \mathbb{R}$.

\vspace{0.2cm}
 
For the coupling construction we introduce some extra notation.

\vspace{0.2cm}

\noindent \textbf{Extra notation - coupling construction}
\begin{itemize}
\item Define
$$
\tau^{N}(u,v)=\inf\{s>0:(U_s^{N,u},U_s^{N,v})=(\nlist,\nlist)\}.
$$
\item Define $\mathcal{N}^{N}(u,v)$ as the number of spikes and leakages of membrane potential of the coupling process, namely
$$
\mathcal{N}^{N}(u,v)=\left\lvert\left\{
s>0
:(U_s^{N,u},U_s^{N,v})\neq \left(\lim_{t \to s^-} U_t^{N,u}, \lim_{t \to s^-} U_t^{N,v}\right)\right\}\right\rvert.
$$
\item
Let $T_0(u,v)=0$ and for $n =1,\ldots, \mathcal{N}^{N}(u,v)$ denote $T_n(u,v)$ the successive jumping times of the process $(U_t^{N,u},U_t^{N,v})_{t\in [0,+\infty)}$, namely 
$$
T_n(u,v)= \inf\left\{t>T_{n-1}(u,v):(U_t^{N,u},U_t^{N,v}) \neq \big(U^{N,u}_{T_{n-1}(u,v)},U^{N,v}_{T_{n-1}(u,v)}\big)\right\}.
$$

\item For each  $n =1,\ldots, \mathcal{N}^{N}(u,v)$, we define $J_n(u,v) \in \{1,\ldots,N\}$ and $K_n(u,v) \in \{\min,\max,\dagger\}$ as the pair such that
$$
(U_{T_n(u,v)}^{N,u},U_{T_n(u,v)}^{N,v})=\pi^{J_n(u,v),K_n(u,v)}\left(U^{N,u}_{T_{n-1}(u,v)},U^{N,v}_{T_{n-1}(u,v)}\right).
$$
The pair $(J_n(u,v),K_n(u,v))$ is exactly the index of the map used to transform the list of membrane potentials at time $T_n(u,v)$
describing the change in the coupling process at this time.

\item For any $j\geq 1$, we define the event
$$
E_j(u,v)=\bigcap_{n=2(j-1)\lceil N^{1/2} \rceil+1}^{2j \lceil N^{1/2} \rceil}\left\{J_n(u,v)=N, K_n(u,v) \neq \dagger\right\}.
$$
$E_j(u,v)$ is the event in which the neuron with greatest membrane potential spikes (either simultaneously on both evolutions or not) at the jump times $T_n(u,v)$, for $n=2(j-1)\lceil N^{1/2} \rceil+1, \ldots, 2j \lceil N^{1/2} \rceil$.

\item The number of jumping times of the process $(U_t^{N,u},U_t^{N,v})_{t\in [0,+\infty)}$ until the first leakage time is defined as
$$
\mathcal{N}_{\dagger}^N(u,v)=\inf\{
n:K_n(u,v)=\dagger\}.
$$

\item The number of jumping times of the process $(U_t^{N,u},U_t^{N,v})_{t\in [0,+\infty)}$ until the coupling time is defined as
$$
\mathcal{N}^N_C(u,v)=\inf\left\{
n: 
\exists\  \sigma: \N \to \N \text{ bijective s.t. } U_{T_n(u,v)}^{N,u}=\sigma\left(U_{T_n(u,v)}^{N,v}\right)\right\}.
$$
\end{itemize}

\begin{remark} \label{remark0}
There exists a bijective map $\sigma: \N \to \N $ such that 
$$
U_s^{N,u}=\sigma\left(U_s^{N,v}\right) \text{, for all $s \geq T_{\mathcal{N}^N_C(u,v)}(u,v)$.}
$$
Moreover, if there exists $t\geq 0$ such that $U_t^{N,u} \in \ladder$ and $U_t^{N,v} \in \ladder$ , then $t \geq T_{\mathcal{N}^N_C(u,v)}(u,v)$.
\end{remark}



The proof of Proposition \ref{unifconvergence} is based on the three following lemmas about the coupling construction. In Lemma \ref{lemmaE1} we prove that starting from two lists $w, w' \in \W$, if the event $E_1(u,v)$ occurs then the number of jumps before the coupling time is smaller or equal $2\lceil N^{1/2} \rceil$. This is based on the fact that a sequence of spikes of neurons with greatest membrane potential lead the process to a ladder list and the fact that the coupling time occurs before or at the same time in which both processes reach simultaneously on the set of ladder lists. In Lemmas \ref{lemmageombound} and \ref{couplingtime} we obtain a bound for the number of jumping times of the process before the coupling time and shows that with probability $1$ as $N\to +\infty$, the coupling time occurs before the process gets trapped. As a consequence, in Corollary \ref{coro} we prove that the coupling time occurs instantaneously as $N\to +\infty$. Putting all this together we are able to prove Proposition \ref{unifconvergence}.

\begin{lemma} \label{lemmaE1} For any lists $w, w' \in \W$, if the event $E_1(w,w')$ occurs, then
$$
\mathcal{N}^N_C(w,w') \leq 2\lceil N^{1/2} \rceil.
$$
\end{lemma}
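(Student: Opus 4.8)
The plan is to extract from the event $E_1(w,w')$ that, within its first $2\lceil N^{1/2}\rceil$ jumps, the coupled process brings both marginals into the set of ladder lists $\ladder$, and then to invoke Remark~\ref{remark0}: two ladder lists are permutations of one another, so if $U^{N,w}_t\in\ladder$ and $U^{N,w'}_t\in\ladder$ for some $t\ge 0$ then the coupling time is at most $t$. In fact the argument gives the sharper bound $\mathcal{N}^N_C(w,w')\le 2(\lfloor N^{1/2}\rfloor-1)$.

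The elementary fact underlying everything is the effect of a spike of the maximal neuron on the sorted list of potentials. Call a configuration of $\st$ a \emph{partial ladder with $r$ excess values} if its distinct potentials, listed increasingly, have the form $0,1,\dots,L,c_1,\dots,c_r$ with $L+1+r=N$ and $c_1>L+1$; the ladder lists are precisely the partial ladders with $r=0$. A direct computation shows that spiking the maximal neuron (the one with potential $c_r$, or with potential $L$ when $r=0$) sends such a list to $0,1,\dots,L+1,c_1+1,\dots,c_{r-1}+1$, which is again a partial ladder, with $r-1$ excess values when $r\ge 1$ (the strict gap is reproduced since $c_1\ge L+2$ forces $c_1+1\ge L+3$) and with $0$ excess values again when $r=0$. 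I would record two consequences: (i) every list of $\W$ is a partial ladder whose value set contains $\{0,1,\dots,N-\lfloor N^{1/2}\rfloor\}$, hence has at most $\lfloor N^{1/2}\rfloor-1$ excess values; (ii) for a partial ladder the maximal potential equals $N-1$ when it is a ladder list and is at least $N$ otherwise.

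Next I would unfold what $E_1(w,w')$ forces on the coupled trajectory. Along its first $2\lceil N^{1/2}\rceil$ jumps the only maps applied are $\pi^{N,\min}$ and $\pi^{N,\max}$: the former applies a maximal-neuron spike to both marginals at once, while the latter (whose rate vanishes when the two maximal potentials agree) applies a maximal-neuron spike only to the marginal with the strictly larger maximal potential. Since $w,w'\in\W$ are partial ladders and remain partial ladders under maximal-neuron spikes, at each of these jumps: if the map is $\pi^{N,\max}$, the updated marginal has strictly larger maximal potential, hence by (ii) is not a ladder list and so has at least one excess value, so its number of excess values drops by exactly one; if the map is $\pi^{N,\min}$, then---unless both marginals are already ladder lists---at least one marginal has a positive number of excess values, which drops. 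Consequently, as long as the two marginals have not yet coupled, the sum of their numbers of excess values strictly decreases at every jump. By (i) this sum is at most $2(\lfloor N^{1/2}\rfloor-1)$ to begin with, so a coupling must occur within $2(\lfloor N^{1/2}\rfloor-1)<2\lceil N^{1/2}\rceil$ jumps---at the latest when the sum would drop to $0$, which forces both marginals into $\ladder$. Remark~\ref{remark0} then yields $\mathcal{N}^N_C(w,w')\le 2(\lfloor N^{1/2}\rfloor-1)\le 2\lceil N^{1/2}\rceil$.

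The step needing the most care is the bookkeeping in the middle paragraph: that a maximal-neuron spike decreases the number of top excess values by \emph{exactly} one---never more and never fewer---which hinges on the strict inequality $c_1>L+1$ being reproduced after the unit shift, and on reading off whether a partial ladder is a ladder list from the value of its maximal potential. The other ingredients (identifying which marginal moves under $\pi^{N,\max}$, and the permutation-invariance packaged into Remark~\ref{remark0}) are routine. The boundary case $N=2$, where $\W=\ladder$ and the claim is immediate, should be noted separately.
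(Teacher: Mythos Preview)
Your proof is correct and follows essentially the same approach as the paper: both arguments track how the ladder prefix of each marginal grows (equivalently, how your ``excess count'' shrinks) under repeated maximal-neuron spikes, and then invoke Remark~\ref{remark0} once both marginals lie in $\ladder$. Your potential-function packaging---showing that the sum of excess counts strictly decreases at every uncoupled step---is a bit cleaner than the paper's case-by-case iteration and even yields the slightly sharper bound $\mathcal{N}^N_C(w,w')\le 2(\lfloor N^{1/2}\rfloor-1)$.
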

\begin{proof}
The occurrence of the event
$$
E_1(w,w')=\bigcap_{n=1}^{2 \lceil N^{1/2} \rceil}\left\{J_n(w,w')=N, K_n(w,w') \neq \dagger\right\}
$$
implies that in the first $2 \lceil N^{1/2} \rceil$ steps of the coupling construction there are neurons spiking and at each step, the neuron that spikes is the neuron with greatest membrane potential.

For the first step, denoting $u_1=U_{T_1(w,w')}^{N,w}$ and $u'_1=U_{T_1(w,w')}^{N,w'}$, we have two possible cases:
\begin{itemize}
    \item If $J_1(w,w')=N$ and $K_1(w,w')=\min$, then
    $$
    \left\{1,\ldots, N - \lfloor N^{1/2} \rfloor +1 \right\} \subset \{u_1(a): a \in \N\}, \ \ \bigcap_{a \in \N} \bigcap_{b \neq a} \{u_1(a) \neq u_1(b)\}
    $$
    and
     $$
    \left\{1,\ldots, N - \lfloor N^{1/2} \rfloor +1 \right\} \subset \{u'_1(a): a \in \N\}, \ \ \bigcap_{a \in \N} \bigcap_{b \neq a} \{u'_1(a) \neq u'_1(b)\}.
    $$
    
     \item If $J_1(w,w')=N$ and $K_1(w,w')=\max$,  then either $u'_1=w'$ and
      $$
    \left\{1,\ldots, N - \lfloor N^{1/2} \rfloor +1 \right\} \subset \{u_1(a): a \in \N\}, \ \ \bigcap_{a \in \N} \bigcap_{b \neq a} \{u_1(a) \neq u_1(b)\}
    $$
    in the case $u_1(a_N^{u_1}) > u'_1(a_N^{u'_1})$, or $u_1=w$ and
     $$
    \left\{1,\ldots, N - \lfloor N^{1/2} \rfloor +1 \right\} \subset \{u'_1(a): a \in \N\}
, \ \ \bigcap_{a \in \N} \bigcap_{b \neq a} \{u'_1(a) \neq u'_1(b)\}
    $$
    in the case $u'_1(a_N^{u'_1}) > u_1(a_N^{u_1})$.
\end{itemize}

Iterating this, we conclude that if the event $E_1(w,w')$ occurs, then
\begin{equation} \label{ladderreach}
U_{T_{2\lceil N^{1/2} \rceil}(w,w')}^{N,w} \in \ladder \ \ \ \ \text{ and } \ \ \ \ U_{T_{2\lceil N^{1/2} \rceil}(w,w')}^{N,w'}\in \ladder.
\end{equation}
By Remark \ref{remark0},  \eqref{ladderreach} implies that 
$\mathcal{N}^N_C(w,w') \leq 2 \lceil N^{1/2} \rceil$.
\end{proof}

\begin{lemma}\label{lemmageombound} For any $n\geq 1$ and for any $w,w' \in \W$,
$$
\P\big(\mathcal{N}^N_C(w,w')\leq 2n \lceil N^{1/2} \rceil < \mathcal{N}_{\dagger}^N(w,w')\big) \geq
$$
$$
\P\left(\text{Geom}\left(\zeta^{2\lceil N^{1/2} \rceil}\right) \leq n\right)\left(\frac{e^{\lfloor N^{1/2} \rfloor}}{e^{\lfloor N^{1/2} \rfloor}+2(N-1)}\right)^{2n \lceil N^{1/2} \rceil},
$$
where
$
\zeta=1-e^{-1}
$
and $\text{Geom}\left(\zeta^{2\lceil N^{1/2} \rceil}\right)$ is a random variable with geometric distribution assuming values in $\{1, 2,\ldots\}$ and with mean $1/\zeta^{2\lceil N^{1/2} \rceil}$.
\end{lemma}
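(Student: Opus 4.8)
The plan is to examine the first $2n\lceil N^{1/2}\rceil$ jumps of the coupled chain, cut them into the $n$ consecutive blocks of length $2\lceil N^{1/2}\rceil$ appearing in the definition of the events $E_j=E_j(w,w')$, and show that it is enough that some block consists entirely of spikes of the currently maximal neuron while no leakage ever fires. Write $M=\lceil N^{1/2}\rceil$.

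The first point is deterministic. On $E_1(w,w')$, Lemma~\ref{lemmaE1} gives $\mathcal N^N_C(w,w')\le 2M$ and (Remark~\ref{remark0} and the proof of Lemma~\ref{lemmaE1}) both marginals then sit at a ladder pair; since a spike of the maximal neuron sends a ladder list to a ladder list, once one block has been entirely maximal-neuron spikes the coalesced pair stays in $\ladder$ under any further such spike and in particular undergoes no leakage there. Iterating, $E_1(w,w')\cap\cdots\cap E_n(w,w')$ forces $\mathcal N^N_C(w,w')\le 2M\le 2nM$ together with the absence of leakage among the first $2nM$ jumps, i.e. $\mathcal N_\dagger^N(w,w')>2nM$; more generally it suffices that $E_j(w,w')$ holds for a single $j\le n$ and that no leakage occurs in the whole window. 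So it is enough to lower bound $\P(\text{some block is all maximal-neuron spikes and no leakage occurs among the first }2nM\text{ jumps})$.

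For the probability I would treat, jump by jump on the embedded chain, the event ``this jump is not a leakage'' separately from ``given that, it is a spike of the maximal neuron''. For the former: along the trajectories that matter here the larger of the two current maximal potentials is always at least $\lfloor N^{1/2}\rfloor$ --- on the first block because $w,w'\in\W$ and a maximal-neuron spike preserves the property of having distinct coordinates containing $\{1,\dots,N-\lfloor N^{1/2}\rfloor\}$ (whence the maximum is $\ge N-\lfloor N^{1/2}\rfloor\ge\lfloor N^{1/2}\rfloor$), and afterwards because the pair is at a ladder pair with maximum $N-1$. Hence the rate of the maximal-neuron spike is $\ge e^{\lfloor N^{1/2}\rfloor}$ while the total rate of leakage transitions of the coupled chain never exceeds $2(N-1)$, so a jump fails to be a leakage with conditional probability at least $\rho:=e^{\lfloor N^{1/2}\rfloor}/(e^{\lfloor N^{1/2}\rfloor}+2(N-1))$; over $2nM$ jumps this contributes the factor $\rho^{2nM}$. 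For the latter: conditionally on a jump not being a leakage, a jump issued from a ladder list is a maximal-neuron spike with probability at least $e^{N-1}/(e+e^{2}+\cdots+e^{N-1})\ge 1-e^{-1}=\zeta$, because $\sum_{k\ge1}e^{-k}=(e-1)^{-1}$. Combining, a block that starts from a ladder pair is entirely maximal-neuron spikes with conditional probability at least $\zeta^{2M}$ once we condition on the no-leakage event; since (by the first paragraph) the first block starts from $\W\times\W$ and every later block following the first successful one starts from a ladder pair, a block-by-block application of the strong Markov property dominates these attempts by $n$ independent $\text{Bernoulli}(\zeta^{2M})$ trials, giving that at least one block succeeds with probability at least $1-(1-\zeta^{2M})^n=\P(\text{Geom}(\zeta^{2M})\le n)$. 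Multiplying the two factors gives the stated bound.

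The main obstacle, and where the proof has to be careful, is in making these two factors multiply legitimately --- running the leakage-avoidance and maximal-neuron-selection estimates \emph{simultaneously} along the embedded chain through successive conditioning --- and especially in handling the first block, whose configurations are not ladder lists: there one must check directly that from a $\W$-configuration (and from every configuration produced along a run of maximal-neuron spikes out of $\W\times\W$) the rate of the maximal-neuron spike still dominates the other non-leakage rates enough to force, conditionally on no leakage, a maximal-neuron spike with probability at least $\zeta$, and that the partial-ladder structure is preserved so that Lemma~\ref{lemmaE1}'s argument applies to close the block.
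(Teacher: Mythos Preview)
Your overall architecture matches the paper's: split the first $2n\lceil N^{1/2}\rceil$ jumps into $n$ blocks of length $2\lceil N^{1/2}\rceil$, wait for a block in which every jump is a spike of the currently maximal neuron, and invoke Lemma~\ref{lemmaE1}. But there is a real gap in how you control the state of the coupled chain between blocks.

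You justify the per-step bounds (the $\zeta$ bound for ``conditional on no leakage, the jump is of the maximal neuron'' and the $\rho$ bound for ``this jump is not a leakage'') only on two kinds of trajectories: inside block~1 starting from $\W\times\W$ under a run of maximal-neuron spikes, and after the first successful block where the pair sits in $\ladder\times\ladder$. What is missing is the state at the start of blocks $2,3,\dots$ \emph{before} the first success. If block~1 fails, some jump in it was a spike of a neuron with index $j<N$; such a spike can take one of the marginals out of $\W$ (indeed, a spike of a neuron with value $k\le N-\lfloor N^{1/2}\rfloor-1$ removes the value $k+1$ from the range). Once outside $\W$, neither the $\zeta$ bound nor the $\rho$ bound is available, so the block-by-block Bernoulli comparison breaks down precisely on the unsuccessful prefixes that the geometric estimate has to average over. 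Your final paragraph flags the first block as the delicate case, but the actual obstruction is the later unsuccessful blocks.

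The paper's fix is to intersect not with the bare event $\{K_m\neq\dagger\ \text{for all }m\le 2n\lceil N^{1/2}\rceil\}$ but with the stronger event
\[
\bigcap_{m=1}^{2n\lceil N^{1/2}\rceil}\bigl\{J_m\in\{N-\lfloor N^{1/2}\rfloor+1,\ldots,N\},\ K_m\neq\dagger\bigr\},
\]
i.e.\ every jump is a spike of one of the top $\lfloor N^{1/2}\rfloor$ neurons. A spike with $J_m$ in this range keeps both marginals in $\W$, so on this event the pair stays in $\W\times\W$ for all $2n\lceil N^{1/2}\rceil$ steps, and the $\zeta$ bound (which only needs $\W$, not $\ladder$) applies at every step of every block, successful or not. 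The factor $\bigl(e^{\lfloor N^{1/2}\rfloor}/(e^{\lfloor N^{1/2}\rfloor}+2(N-1))\bigr)^{2n\lceil N^{1/2}\rceil}$ is then a lower bound on the probability of this stronger event, not merely on ``no leakage''. Your argument can be repaired by making exactly this substitution.
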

\begin{proof} 
To simplify the presentation of the proof, for a fixed pair of lists $w,w' \in \W$ and for any $m\geq 1$, we will use the shorthand notation $J_m$, $K_m$ and $E_m$ instead
of $J_m(w,w')$, $K_m(w,w')$ and $E_m(w,w')$, respectively.

For any $n\geq 1$, the occurrence of the event
$$
\bigcap_{m=1}^{2n \lceil N^{1/2} \rceil}\left\{J_m \in \{N-\lfloor N^{1/2} \rfloor +1,\ldots, N\}, K_m\neq \dagger \right\}
$$
implies that $U_m^{N,w} \in \W$ and $U_m^{N,w'} \in \W$, for all $m=1,\ldots, 2n \lceil N^{1/2} \rceil$. This implies that
for any $w,w' \in \W$,
$$
\P\big(\mathcal{N}^N_C(w,w')\leq 2n \lceil N^{1/2} \rceil < \mathcal{N}_{\dagger}^N(w,w')\big) \geq
$$
$$
\P\left(\bigcup_{m=1}^{n}E_m,\bigcap_{m=1}^{2n \lceil N^{1/2} \rceil}\left\{J_m \in \{N-\lfloor N^{1/2} \rfloor +1,\ldots, N\}, K_m\neq \dagger \right\}\right).
$$

For any lists $u, v \in \W$ 
we have that
\begin{equation} \label{eqprop2lemma2}
\P(J_1(u,v)=N, K_1(u,v) \neq \dagger)= \frac{e^{ \max\{u(a_N^{u}),v(a_N^{v})\} }}{\displaystyle \sum_{j=2}^{N} e^{ \max\{u(a_j^{u}),v(a_j^{v})\} }}\P(K_1(u,v) \neq \dagger).
\end{equation}
The left term of the right-hand side in Equation \eqref{eqprop2lemma2} is bounded below by 
$$
\frac{e^{ (N-1)}}{\displaystyle\sum_{j=1}^{N-1} e^{ j}} \geq \zeta.
$$
Therefore,
$$
\P\left(E_1, \bigcap_{m=1}^{2\lceil N^{1/2} \rceil}\left\{J_m \in \{N-\lfloor N^{1/2} \rfloor +1,\ldots, N\}, K_m\neq \dagger\right\} \right) \geq 
$$
$$
\zeta^{2\lceil N^{1/2} \rceil}\P\left( \bigcap_{m=1}^{2\lceil N^{1/2} \rceil}\left\{J_m \in \{N-\lfloor N^{1/2} \rfloor +1,\ldots, N\}, K_m\neq \dagger\right\} \right) ,
$$
and more generally, for any $n=1,2,\ldots$,
$$
\P\left(\bigcup_{m=1}^nE_m, \bigcap_{m=1}^{2n \lceil N^{1/2} \rceil}\left\{J_m \in \{N-\lfloor N^{1/2} \rfloor +1,\ldots, N\}, K_m\neq \dagger\right\}\right) \geq
$$
$$
\left(1-\big(1-\zeta^{2\lceil N^{1/2} \rceil}\big)^n\right)\P\left(\bigcap_{m=1}^{2n \lceil N^{1/2} \rceil}\left\{J_m \in \{N-\lfloor N^{1/2} \rfloor +1,\ldots, N\}, K_m\neq \dagger\right\}\right).
$$

To conclude the proof, note that for any lists $u, v \in \W$, we have $\max\{u(a_N^{u}),v(a_N^{v})\} \geq N-1$ and
$u\big(a_{N-\lfloor N^{1/2} \rfloor}^{u}\big)=v\big(a_{N-\lfloor N^{1/2} \rfloor}^{v}\big)=N-\lfloor N^{1/2} \rfloor-1$.
This implies that
$$
\P\left(\bigcap_{m=1}^{2n \lceil N^{1/2} \rceil}\left\{J_m \in \{N-\lfloor N^{1/2} \rfloor +1,\ldots, N\}, K_m\neq \dagger\right\}\right) \geq 
$$
$$
\left(\frac{e^{\lfloor N^{1/2} \rfloor}}{e^{\lfloor N^{1/2} \rfloor}+2(N-1)}\right)^{2n \lceil N^{1/2} \rceil}.
$$
\end{proof}

\begin{lemma} \label{couplingtime} The following holds 
$$
\inf_{w,w' \in \W}\P\big(\mathcal{N}^N_C(w,w')< e^{N^{1/2}}N^{-2} < \mathcal{N}_{\dagger}^N(w,w')\big) \to 1, \text{ as } N \to +\infty.
$$
\end{lemma}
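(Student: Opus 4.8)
The plan is to combine Lemma~\ref{lemmageombound} --- which produces the coupling after only $O\big(N^{1/2}e^{cN^{1/2}}\big)$ jumps for the constant $c=-2\ln(1-e^{-1})<1$ --- with the observation that, while both coupled copies sit in $\W$, a leakage is doubly‑exponentially unlikely at each jump, so that no leakage occurs before jump $e^{N^{1/2}}N^{-2}$ even though the coupling has already happened by then. Concretely I would fix $n_N:=\lceil e^{N^{1/2}}N^{-3}\rceil$ and $L_N:=\lceil e^{N^{1/2}}N^{-2}\rceil$ (so that $2n_N\lceil N^{1/2}\rceil< e^{N^{1/2}}N^{-2}\le L_N$ for $N$ large), and for $w,w'\in\W$ introduce the event $F_N(w,w')$ that $\bigcup_{m=1}^{n_N}E_m(w,w')$ occurs and that the first $L_N$ jumps of the coupled process are \emph{high spikes}, meaning $J_j(w,w')\in\{N-\lfloor N^{1/2}\rfloor+1,\dots,N\}$ and $K_j(w,w')\neq\dagger$ for $j=1,\dots,L_N$. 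The two facts to establish are $F_N(w,w')\subseteq\{\mathcal{N}^N_C(w,w')< e^{N^{1/2}}N^{-2}<\mathcal{N}_{\dagger}^N(w,w')\}$ and $\inf_{w,w'\in\W}\P(F_N(w,w'))\to1$ as $N\to+\infty$.

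For the inclusion I would use three elementary facts. First, spiking any of the $\lfloor N^{1/2}\rfloor$ highest neurons of a configuration in $\W$ produces a configuration still containing $I_N$ with pairwise distinct coordinates, i.e. a high spike maps $\W$ into $\W$ in each coordinate; hence on $F_N(w,w')$ both components stay in $\W$ during the first $L_N$ jumps, in particular no leakage occurs among them, so $\mathcal{N}_{\dagger}^N(w,w')>L_N\ge e^{N^{1/2}}N^{-2}$. Second, by Lemma~\ref{lemmaE1} and the Markov property, $2\lceil N^{1/2}\rceil$ consecutive maximal‑index spikes starting from an arbitrary pair in $\W\times\W$ drive both copies into $\ladder$; applying this to the block of the first occurring $E_{m^\ast}$ (with $m^\ast\le n_N$) and then Remark~\ref{remark0}, the coupling time satisfies $\mathcal{N}^N_C(w,w')\le 2m^\ast\lceil N^{1/2}\rceil\le 2n_N\lceil N^{1/2}\rceil< e^{N^{1/2}}N^{-2}$.

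For the probability bound I would split at jump $2n_N\lceil N^{1/2}\rceil$. The event that $\bigcup_{m\le n_N}E_m(w,w')$ occurs and the first $2n_N\lceil N^{1/2}\rceil$ jumps are high spikes is exactly the one estimated from below inside the proof of Lemma~\ref{lemmageombound}, so its probability is at least the right‑hand side there; with $n_N\sim e^{N^{1/2}}N^{-3}$ one checks routinely (using $-1<\ln(1-e^{-1})<0$, so $n_N\,\zeta^{2\lceil N^{1/2}\rceil}\to\infty$, and $(1-x)^k\ge1-kx$ together with $2n_N\lceil N^{1/2}\rceil\,(N-1)e^{-\lfloor N^{1/2}\rfloor}\to0$) that this lower bound tends to $1$ uniformly in $w,w'$. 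On that event both components lie in a common $\W$‑configuration at jump $2n_N\lceil N^{1/2}\rceil$, so by the strong Markov property and the per‑jump estimate --- namely that for any $(u,v)\in\W\times\W$ the next jump of the coupled process is a high spike with probability at least $1-3e^{-\lfloor N^{1/2}\rfloor}$, which holds because $\max_a u(a),\max_a v(a)\ge N-1$ forces the total rate of $\mathcal{G}_C$ at $(u,v)$ to be at least $e^{N-1}$ while the leakage rate is $N$ and the low‑index spike rate is at most $\sum_{k=1}^{N-\lfloor N^{1/2}\rfloor-1}e^{k}$ --- the remaining jumps $2n_N\lceil N^{1/2}\rceil+1,\dots,L_N$ are all high spikes with conditional probability at least $(1-3e^{-\lfloor N^{1/2}\rfloor})^{L_N}\ge1-3L_Ne^{-\lfloor N^{1/2}\rfloor}\to1$. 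Multiplying the two lower bounds gives $\inf_{w,w'\in\W}\P(F_N(w,w'))\to1$, and the lemma follows.

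The step I expect to be most delicate is not any single estimate but the bookkeeping around Lemma~\ref{lemmageombound}: one must extract from its proof the slightly stronger statement that, on the event considered there, the two copies actually coincide \emph{and} lie in a $\W$‑configuration right after the $E_{m^\ast}$‑block, rather than merely reading off the probability bound --- this is what lets the strong Markov argument take over for the last $L_N-2n_N\lceil N^{1/2}\rceil$ jumps. One must also check that the per‑jump ``high spike'' estimate is genuinely uniform over $\W\times\W$ and remains valid for the two‑coordinate dynamics before the coupling occurs, for which it is enough to note that the lowest $N-\lfloor N^{1/2}\rfloor$ coordinates of any $\W$‑configuration form the ladder $0,1,\dots,N-\lfloor N^{1/2}\rfloor-1$, so that the relevant rates in $\mathcal{G}_C$ are controlled exactly as for a single copy.
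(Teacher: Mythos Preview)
Your argument is correct and follows essentially the same route as the paper: both rely on Lemma~\ref{lemmageombound} and the fact that a ``high spike'' keeps each coordinate in $\W$, then check that the geometric and high-spike factors tend to~$1$. The paper's proof is more direct, however: instead of splitting at $2n_N\lceil N^{1/2}\rceil$ and invoking the strong Markov property for the remaining jumps, it simply applies Lemma~\ref{lemmageombound} with $n$ chosen so that $2n\lceil N^{1/2}\rceil\approx e^{N^{1/2}}N^{-2}$, which already yields both $\mathcal{N}^N_C(w,w')\le e^{N^{1/2}}N^{-2}$ and $\mathcal{N}_{\dagger}^N(w,w')>e^{N^{1/2}}N^{-2}$ in one stroke (the two limiting checks are exactly the ones you perform).
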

\begin{proof}
For any $w,w' \in \W$, taking $n=\lfloor e^{N^{1/2}}N^{-2}\rfloor / (2\lceil N^{1/2} \rceil) $ 
in Lemma \ref{lemmageombound}, we have that
\begin{equation} \label{propbound2}
\P\big(\mathcal{N}^N_C(w,w')< \lfloor e^{N^{1/2}}N^{-2} \rfloor < \mathcal{N}_{\dagger}^N(w,w')\big) \geq
\end{equation}
$$
\left(1-\left(1-\zeta^{2\lceil N^{1/2} \rceil}\right)^{\lfloor e^{N^{1/2}}N^{-2}\rfloor / (2\lceil N^{1/2} \rceil) }\right)\left(\frac{e^{\lfloor N^{1/2} \rfloor}}{e^{\lfloor N^{1/2} \rfloor}+2N}\right)^{\lfloor e^{N^{1/2}}N^{-2}\rfloor} \to 1, 
$$
as $N \to +\infty.$
To finish the proof, just note that the bound of Equation  \ref{propbound2} does not depend on the initial lists $w,w' \in \W$ and take $n=\lceil e^{N^{1/2}}N^{-2}\rceil / (2\lceil N^{1/2} \rceil)$ in Lemma \ref{lemmageombound}.
\end{proof}

\begin{corollary} \label{coro}
The following holds
    $$
\sup_{w, w' \in \W}\P(T_{\mathcal{N}^N_C(w,w')}(w,w')>e^{-(N-N^{1/2})}) \to 0, \text{ as } N \to +\infty.
$$
\end{corollary}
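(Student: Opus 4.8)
The plan is to combine the bound on the \emph{number} of jumps before coupling from Lemma \ref{couplingtime} with a crude lower bound on the \emph{rate} of the coupled process, so that a bounded number of jumps takes a negligible amount of time. First I would observe that on the event $\{\mathcal{N}^N_C(w,w') < e^{N^{1/2}}N^{-2} < \mathcal{N}_{\dagger}^N(w,w')\}$, all of the first $\mathcal{N}^N_C(w,w')$ jumps of the coupled process are spikes (not leakages), and at each such jump both coordinates lie in $\W$; in particular the neuron with the largest membrane potential in each coordinate has potential at least $N-1$. Hence the total jump rate $\mathcal{G}_C$-wise at every one of those configurations is at least $e^{N-1}$ (just from the single term corresponding to the maximal neuron spiking). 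Consequently, conditionally on the sequence of jump \emph{types}, the holding times $T_m(w,w')-T_{m-1}(w,w')$ for $m=1,\ldots,\mathcal{N}^N_C(w,w')$ are stochastically dominated by independent exponential random variables of rate $e^{N-1}$, i.e.\ by $\xi_m^{\{e^{N-1}\}}$.

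Next I would bound
\[
T_{\mathcal{N}^N_C(w,w')}(w,w') \;\le\; \sum_{m=1}^{\lceil e^{N^{1/2}}N^{-2}\rceil} \xi_m^{\{e^{N-1}\}}
\]
on the event of Lemma \ref{couplingtime}. Writing $M_N=\lceil e^{N^{1/2}}N^{-2}\rceil$, the right-hand side has mean $M_N e^{-(N-1)}$, which is $o\!\left(e^{-(N-N^{1/2})}\right)$ since $M_N e^{-(N-1)} \le C e^{N^{1/2}}N^{-2}e^{-(N-1)} = o\!\left(e^{-(N-N^{1/2})}\right)$. A Markov inequality (or a Chernoff bound for a sum of i.i.d.\ exponentials, which is cleaner) then gives
\[
\P\!\left(\sum_{m=1}^{M_N}\xi_m^{\{e^{N-1}\}} > e^{-(N-N^{1/2})}\right)\longrightarrow 0
\]
uniformly in $w,w'$, because this probability does not depend on the initial lists. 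Combining this with
\[
\P\big(T_{\mathcal{N}^N_C(w,w')}(w,w')>e^{-(N-N^{1/2})}\big)
\le \P\!\left(\sum_{m=1}^{M_N}\xi_m^{\{e^{N-1}\}}>e^{-(N-N^{1/2})}\right)
+\P\big(\mathcal{N}^N_C(w,w')\ge e^{N^{1/2}}N^{-2}\text{ or }\mathcal{N}_{\dagger}^N(w,w')\le e^{N^{1/2}}N^{-2}\big),
\]
and noting that the last term tends to $0$ uniformly by Lemma \ref{couplingtime}, yields the claim after taking $\sup_{w,w'\in\W}$.

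The main point to be careful about — and the only genuine obstacle — is the stochastic domination of the holding times: one has to argue that conditioning on the event in Lemma \ref{couplingtime} (which restricts the future jump types) does not destroy the exponential upper bound on the individual holding times. This is handled by noting that the holding time at step $m$, given the current configuration, is exactly an exponential of rate equal to the total outgoing rate at that configuration, which is at least $e^{N-1}$ regardless of which jump eventually occurs; conditioning on the \emph{type} of that jump only re-weights but still leaves the holding time dominated by $\xi^{\{e^{N-1}\}}$ (the minimum of the competing exponentials, one per possible transition, is at most any one of them). Once this is in place the rest is the routine tail estimate sketched above.
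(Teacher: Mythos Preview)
Your approach is essentially identical to the paper's: split on the good event from Lemma~\ref{couplingtime}, dominate the pre-coupling holding times by i.i.d.\ exponentials of rate $e^{N-1}$, and control the resulting sum.

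One point to tighten: the event $\{\mathcal{N}^N_C(w,w') < e^{N^{1/2}}N^{-2} < \mathcal{N}_{\dagger}^N(w,w')\}$ does \emph{not} by itself force both coordinates to remain in $\W$ at every step. That event only rules out leakages; it does not exclude a spike of a neuron with index $j\le N-\lfloor N^{1/2}\rfloor$, and such a spike removes an element of $I_N$ from the range of the affected coordinate, so the coordinate can leave $\W$. What \emph{is} preserved by spikes is the mutual distinctness of the membrane potentials: if $u$ has all values distinct and neuron $a$ with $u(a)>0$ spikes, then $\pi^{a,*}(u)$ again has all values distinct. Since both $w,w'\in\W$ start with distinct values, every coordinate encountered before the first leakage is a list of $N$ distinct non-negative integers containing $0$, hence has maximum at least $N-1$. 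That is exactly what you need for the total coupled rate to be at least $e^{N-1}$ (via the $\pi^{N,\min}$ term), and from there your domination and tail estimate go through as written. The paper's proof uses the same rate bound without spelling this out either.
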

\begin{proof} For any $w, w' \in \W$ and for any $t>0$, 
$$
\P(T_{\mathcal{N}^N_C(w,w')}(w,w')>t)\leq
$$
$$
\P(T_{\mathcal{N}^N_C(w,w')}(w,w')>t, \mathcal{N}^N_C(w,w')< e^{N^{1/2}}N^{-2} < \mathcal{N}_{\dagger}^N(w,w'))+
$$
$$
\P(\{\mathcal{N}^N_C(w,w')< e^{N^{1/2}}N^{-2} < \mathcal{N}_{\dagger}^N(w,w')\}^c).
$$
Lemma \ref{couplingtime} implies that
$$
\sup_{w, w' \in \W}\P(\{\mathcal{N}^N_C(w,w')< e^{N^{1/2}}N^{-2} < \mathcal{N}_{\dagger}^N(w,w')\}^c)\to 0, \text{ as } N \to +\infty.
$$
Moreover,
$$
\P(T_{\mathcal{N}^N_C(w,w')}(w,w')>t, \mathcal{N}^N_C(w,w')< e^{N^{1/2}}N^{-2} < \mathcal{N}_{\dagger}^N(w,w'))\leq
$$
\begin{equation} \label{eqcoro}
\P(T_{\lfloor e^{N^{1/2}}N^{-2}\rfloor }(w,w')>t, \mathcal{N}^N_C(w,w')< e^{N^{1/2}}N^{-2} < \mathcal{N}_{\dagger}^N(w,w')).
\end{equation}
For any initial lists $w,w'$ and for any $s>0$, if the event $\mathcal{N}_{\dagger}^N(w,w') > e^{N^{1/2}}N^{-2}$ occurs, then
$$
\P(T_{j}(w,w')-T_{j-1}(w,w')>s)\leq \P(\xi^{e^{(N-1)}}>s), \text{ for any } j=1,\ldots, \lfloor e^{N^{1/2}}N^{-2} \rfloor.
$$
Therefore, taking $t=e^{-(N-N^{1/2})}$ the right-hand side of Equation \eqref{eqcoro} is bounded above by
\begin{equation} \label{eqcoro2}
\P\left(\sum_{j=1}^{\lfloor e^{N^{1/2}}N^{-2} \rfloor}\xi_j^{\{e^{ (N-1)}\}}>e^{-(N-N^{1/2})}\right) \to 0, \text{ as } N \to +\infty.
\end{equation}
We conclude the proof by putting Equations \eqref{eqcoro} and \eqref{eqcoro2} together and noting that the bound on \eqref{eqcoro2} 
does not depend on the lists $w, w' \in \W$.

\end{proof}

\begin{remark} \label{remarkladderreach} By Remark \ref{remark0} and Equation \eqref{ladderreach}, we can replace $\mathcal{N}^N_C(w,w')$ by 
$$
\inf\{s>0: \{U_s^{N,w} \in \ladder\} \cap \{U_s^{N,w'} \in \ladder\} \}
$$ 
in Lemmas \ref{lemmaE1}, \ref{lemmageombound} and \ref{couplingtime}.
This implies that
$$
\sup_{w \in \W}\P(\inf\{s>0: U_s^{N,w} \in \ladder \}>e^{-(N-N^{1/2})}) \to 0, \text{ as } N \to +\infty.
$$
\end{remark}

\begin{proof} We have now all the ingredients to prove Proposition \ref{unifconvergence}.

For any $t>0$ and for any $w,w' \in \W$,
$$
\P(\tau^{N,w}>t)\leq \P(\tau^{N,w}>t, \mathcal{N}^N_C(w,w') < \mathcal{N}_{\dagger}^N(w,w'))+\P(\mathcal{N}^N_C(w,w') > \mathcal{N}_{\dagger}^N(w,w'))
$$
Now, note that
$$
\P(\tau^{N,w}>t, \mathcal{N}^N_C(w,w') < \mathcal{N}_{\dagger}^N(w,w'))=
\P(\tau^{N,w'}>t, \mathcal{N}^N_C(w,w') < \mathcal{N}_{\dagger}^N(w,w')).
$$
This implies that
$$
\P(\tau^{N,w}>t)-\P(\tau^{N,w'}>t) \leq \P(\mathcal{N}^N_C(w,w') > \mathcal{N}_{\dagger}^N(w,w')).
$$
Analogously,
$$
\P(\tau^{N,w'}>t)-\P(\tau^{N,w}>t) \leq \P(\mathcal{N}^N_C(w,w') > \mathcal{N}_{\dagger}^N(w,w')),
$$
and therefore,
$$
\vert\P(\tau^{N,w}>t)-\P(\tau^{N,w'}>t)\vert \leq \P(\mathcal{N}^N_C(w,w') > \mathcal{N}_{\dagger}^N(w,w')).
$$

By Lemma \ref{couplingtime}, we conclude that
$$
\sup_{t\geq 0}\ \sup_{w,w'\in \W}\vert\P(\tau^{N,w}>t)-\P(\tau^{N,w'}>t)\vert \leq
$$
$$
\sup_{w,w'\in \W}\P(\mathcal{N}^N_C(w,w') < \mathcal{N}_{\dagger}^N(w,w')) \to 0, \text{ as } N \to +\infty,
$$
and with this we concluded the proof.
\end{proof}

\section{Proof of Theorem \ref{massconcentration}} \label{sec:teo1}

To prove Theorem \ref{massconcentration} we need to introduce the auxiliary process $(\tilde{U}_t^{N,u})_{t\in [0,+\infty)}$ that evolves as a Markov jump process taking values in the set
$
\auxst=\st\setminus\{\nlist\}
$
with initial list $u \in \auxst$ and with infinitesimal generator $\tilde{\mathcal{G}}$ defined as follows
$$
\tilde{\mathcal{G}}f(u)= \sum_{b\in \N}
e^{  u(b)}\mathbf{1}\{u(b)>0\}\left[f(\pi^{b,*}(u))-f(u)\right]+
$$
$$
\sum_{b\in \N}\mathbf{1}\{\pi^{b,\dagger}(u) \neq \nlist \}\left[f(\pi^{b,\dagger}(u))-f(u)\right],
$$
for any  bounded function $f:\auxst \to \mathbb{R}$.

\begin{remark} \label{remark1}
In general, the processes $(\tilde{U}_t^{N,u})_{t\in [0,+\infty)}$  and $(U_t^{N,u})_{t\in [0,+\infty)}$ have the same jump rates. The only exception is that $(\tilde{U}_t^{N,u})_{t\in [0,+\infty)}$ can not jump from a list in which only one neuron has non-null membrane potential to the null list.

This implies that the processes $(U_t^{N,u})_{t\in [0,+\infty)}$ and $(\tilde{U}_t^{N,u})_{t\in [0,+\infty)}$ can be coupled in such way that 
$$
\tilde{U}_s^{N,u}=U_s^{N,u} \text{, for all } s \in \big[0,\tau^{N,u}
\big).
$$
\end{remark}

For the auxiliary process $(\tilde{U}_t^{N,u})_{t\in [0,+\infty)}$, let us introduce some extra notation.

\vspace{0.2cm}

\noindent \textbf{Extra notation - auxiliary process}
\begin{itemize}
\item
Denote $\tilde{T}_0=0$ and for $n =1,2,\ldots$, denote $\tilde{T}_n$ the successive jumping times of the process $(\tilde{U}_t^{N,u})_{t\in [0,+\infty)}$, namely 
$$
\tilde{T}_n= \inf\left\{t>\tilde{T}_{n-1}:\tilde{U}_t^{N,u} \neq \tilde{U}^{N,u}_{\tilde{T}_{n-1}}\right\}.
$$

\item For $n =1,2,\ldots$, we define $\tilde{A}_n \in \N$ and $\tilde{O}_n \in \{*,\dagger\}$ as the pair such that
$$
\tilde{U}_{\tilde{T}_n}^{N,u}=\pi^{\tilde{A}_n,\tilde{O}_n}\left(\tilde{U}^{N,u}_{\tilde{T}_{n-1}}\right).
$$
The pair $(\tilde{A}_n, \tilde{O}_n)$ is exactly the index of the map used to transform the list of membrane potentials at time $\tilde{T}_n$ describing the change in the auxiliary process at this time.


\item The leakage times are defined as $\tilde{T}_0^{\dagger}=0$ and for $n\geq 1$,
    $$
    \tilde{T}_n^{\dagger}=\inf\{\tilde{T}_{m}>\tilde{T}_{n-1}^{\dagger}:\tilde{O}_{m}=\dagger\}.
    $$
    
\item The spiking times are defined as $\tilde{T}_0^{*}=0$ and for $n\geq 1$,
    $$
    \tilde{T}_n^{*}=\inf\{\tilde{T}_{m}>\tilde{T}_{n-1}^{*}:\tilde{O}_{m}=*\}.
    $$
    
\item For any time interval $I \subset [0,+\infty)$, the counting measures indicating the number of leakage
times and spiking times that occurred during the time interval $I$ are defined as
$$
\tilde{Z}^{\dagger}(I)= \sum_{j=1}^{+\infty}\mathbf{1}\{\tilde{T}_j^{\dagger} \in I\} \ \ \ \ \text{ and } \ \ \ \ 
\tilde{Z}^{*}(I)= \sum_{j=1}^{+\infty}\mathbf{1}\{\tilde{T}_j^{*} \in I\}.
$$
\item In the next proposition, we prove that $(\tilde{U}_t^{N,u})_{t\in [0,+\infty)}$ has an unique invariant probability measure. We use the symbol $\mu^{N}$ to denote this probability measure.
\end{itemize}

\begin{proposition} \label{ergodic}
The process $(\tilde{U}_t^{N,u})_{t\in [0,+\infty)}$ is ergodic. 
\end{proposition}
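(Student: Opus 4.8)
The plan is to show that $(\tilde U_t^{N,u})_{t\ge 0}$ is a non-explosive continuous-time Markov chain on the countable set $\auxst=\st\setminus\{\nlist\}$ that has exactly one recurrent communicating class, that this class is positive recurrent and is reached almost surely from every starting list. This yields a unique invariant probability measure, which we call $\mu^{N}$, together with $\tilde U_t^{N,u}\Rightarrow \mu^{N}$ as $t\to\infty$ for every $u\in\auxst$, i.e.\ ergodicity.

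\medskip

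\textbf{Step 1: a reachability statement.} First I would prove that the list $x_0:=(1,0,\ldots,0)\in\auxst$ is reachable, through a finite chain of positive-rate transitions, from every $u\in\auxst$. Starting from $u\neq\nlist$, pick one active neuron $b$ and successively apply the leakage maps $\pi^{b',\dagger}$ to every other active neuron $b'$; since $b$ stays active along the way, none of these jumps lands on $\nlist$, so each has positive rate, and the chain reaches the list equal to $u(b)$ on $b$ and $0$ elsewhere. A spike of $b$ then produces a list with $N-1$ neurons at potential $1$ (and $b$ at $0$); finally, leaking all active neurons but one — using one extra spike first when $b=1$ so that the survivor can be taken to be neuron $1$ — reaches $x_0$, again without ever passing through $\nlist$ because a neuron of potential $1$ is always kept active. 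Since recurrent communicating classes are closed, this forces $x_0$ to lie in \emph{every} recurrent class of the chain; hence there is at most one recurrent class, namely the communicating class $C(x_0)$ of $x_0$.

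\medskip

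\textbf{Step 2: a Foster--Lyapunov drift estimate.} Next I would apply the standard Foster--Lyapunov criterion with $V(u)=\sum_{a\in\N}u(a)$, which has finite sublevel sets on $\auxst$. A spike of a neuron at potential $k\ge 1$ changes $V$ by $N-1-k$, and an admissible leakage of such a neuron changes $V$ by $-k$; the only lists on which leakage is inadmissible are those with a single active neuron. Writing $M_N:=\max_{k\ge 1}\big(e^{k}(N-1-k)\big)\vee 0<\infty$, a direct computation of $\tilde{\mathcal{G}}V$ gives $\tilde{\mathcal{G}}V(u)\le M_N$ on lists with a single active neuron (the term $e^{k}(N-1-k)$ is negative as soon as $k\ge N$) and $\tilde{\mathcal{G}}V(u)\le NM_N-V(u)$ on lists with at least two active neurons. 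Thus $\tilde{\mathcal{G}}V\le NM_N$ everywhere, which gives non-explosion, and $\tilde{\mathcal{G}}V(u)\le -1$ for every $u$ outside the finite set $F:=\{u\in\auxst:V(u)\le NM_N+1\}$ — on the single-active-neuron part one checks that $V(u)>NM_N+1$ forces the potential to be at least $N$, so the drift is then at most $-e^{N}\le-1$. By the Foster criterion the chain is non-explosive, reaches $F$ in finite expected time from every state, and is positive recurrent; in particular it has at least one recurrent class, which by Step 1 must be $C(x_0)$, and it carries a unique stationary distribution $\mu^{N}$ supported on $C(x_0)$.

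\medskip

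\textbf{Step 3: conclusion.} Finally I would close the loop. Since $F$ is finite and visited infinitely often almost surely from every starting list, the chain almost surely eventually reaches a recurrent state, hence enters $C(x_0)$; combined with the automatic aperiodicity of continuous-time chains, this gives $\tilde U_t^{N,u}\Rightarrow\mu^{N}$ as $t\to\infty$ for every $u\in\auxst$, which is the assertion of ergodicity. The main obstacle I anticipate is Step 1: because the leakage transitions are suppressed exactly when a single neuron is active, every explicit path toward $x_0$ must be routed so as never to pass through $\nlist$; once this combinatorial bookkeeping is done carefully, the drift estimate of Step 2 and the conclusion in Step 3 are routine.
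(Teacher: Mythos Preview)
Your proof is correct, but it follows a genuinely different route from the paper's. The paper argues in the spirit of Lemma~\ref{ladderreaching} and Theorem~\ref{teo: extinction}: it fixes the specific ladder list $l(a)=a-1$, observes that from any $u'\in\auxst$ the embedded chain lands in $\ladder$ after $N-1$ steps with probability at least $[2(N-1)]^{-(N-1)}$, and from there reaches $l$ in $N$ further steps with probability at least some $\tilde\epsilon>0$; a geometric-domination argument then gives $\mathbb{E}[\tilde{\mathcal{N}}^{N,l}]<\infty$, so the embedded chain is positive recurrent, and the uniform lower bound $e$ on the jump rates upgrades this to ergodicity of the continuous-time chain. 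You instead run a Foster--Lyapunov argument with $V(u)=\sum_a u(a)$, extracting non-explosion from the bound $\tilde{\mathcal G}V\le NM_N$ and positive recurrence from the drift $\tilde{\mathcal G}V\le -1$ off a finite set, together with an explicit path construction to $x_0=(1,0,\dots,0)$ for uniqueness of the recurrent class.

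Both arguments are sound. The paper's approach is more elementary and self-contained, reuses machinery already in place, and yields quantitative tail bounds on the hitting time of $l$ (which is thematically aligned with the later estimates in Section~\ref{sec:teo1}). Your approach is more systematic and makes non-explosion explicit rather than implicit; it also does not rely on Lemma~\ref{ladderreaching}. Two small remarks on presentation: since the chain is \emph{not} irreducible on all of $\auxst$ (e.g.\ for $N=2$ the state $(2,0)$ is never reached from $(1,0)$), you should phrase ``positive recurrent'' as ``the unique recurrent class $C(x_0)$ is positive recurrent and is hit a.s.\ from every state,'' which is exactly what your Steps~1--2 give. Also note that your bound $\tilde{\mathcal G}V\le NM_N-V$ on multi-active lists uses only the leakage drift $-V$; you could sharpen it by keeping the negative spike contributions for $k\ge N$, but the cruder bound already suffices.
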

\begin{proof}
Let $l \in \ladder$ satisfies $l(a)=a-1$, for all $a \in \N$.
For any $u \in \auxst$, we have that
$$
l=\pi^{1,*} \circ \pi^{2,*} \circ \ldots \circ \pi^{N,*}(u),
$$
and then, if the event $\bigcap_{j=1}^{N}\{\tilde{A}_{j}=N-j+1, \tilde{O}_j=*\}$ occurs, then
$
\tilde{U}^{N,u}_N=l.
$

Let
$$
\tilde{\mathcal{N}}^{N,u}=\inf\{n \geq 1:\tilde{U}_{\tilde{T}_n}^{N,u}=l\}.
$$
As in Theorem \ref{teo: extinction}, for any $u' \in \auxst$, we have that
\begin{align*}
    &\P\Big(
    \tilde{\mathcal{N}}^{N,u}\leq n+2N-1\ \Big\vert\ \tilde{U}^{N,u}_{\tilde{T}_n}=u'\Big) \geq \\ 
    &\P\left(\tilde{U}_{\tilde{T}_{n+N-1}}^{N,u} \in \ladder \Big\vert \ \tilde{U}_{\tilde{T}_n}^{N,u}=u'\right)\P\left(\tilde{\mathcal{N}}^{N,u}\leq n+2N-1\ \Big\vert\ \tilde{U}_{\tilde{T}_{n+N-1}}^{N,u} \in \ladder\right).
\end{align*}
Moreover, the right-hand side of the equation above is bounded above by $[2(N-1)]^{-(N-1)}\tilde{\epsilon}$, where
$$
\tilde{\epsilon}=\min\left\{\P\left(\bigcap_{j=1}^{N}\{\tilde{A}_{n+j}=N-j+1, \tilde{O}_{n+j}=*\} \ \Big\vert\ \tilde{U}_{T_n}^{N,u}=v\right) : v \in \ladder \right\}.
$$

We conclude that for any $n\geq 1$,
$$
\P(\tilde{\mathcal{N}}^{N,l}\geq n) \leq \P\big((2N -1)\times \text{Geom}\big([2(N-1)]^{-(N-1)}\tilde{\epsilon}\big) \geq n\big),
$$
where $\text{Geom}(r)$ denotes a random variable with geometric distribution assuming values in $\{1,2,\ldots\}$ and with mean $1/r$. This implies that $\mathbb{E}(\tilde{\mathcal{N}}^{N,l})< +\infty$ and then,
$(\tilde{U}^{N,u}_{\tilde{T}_n})_{n\geq 0}$ is a positive-recurrent Markov chain. The jump rate of the process $(\tilde{U}_t^{N,u})_{t\in [0,+\infty)}$ satisfies
$$
\sum_{a \in \N}\mathbf{1}\{u'(a)>0\}\left(e^{ u'(a)}+\mathbf{1}\{\pi^{a,\dagger}(u') \neq \nlist\}\right) \geq e,
$$
for any $u' \in \auxst$. 
Putting all this together we conclude that $(\tilde{U}_t^{N,u})_{t\in [0,+\infty)}$ is ergodic.
\end{proof}

The proof of Theorem \ref{massconcentration} is based on two lemmas. In Lemma \ref{massconcentration1} we prove that the invariant measure of the auxiliary process gets concentrated in the set $\W$ as $N\to+\infty$. We show that there are events that occur with probability $1$ as $N\to +\infty$ that lead the process from any initial list to $\W$ by sequentially reaching sets that are approaching $\W$. As a consequence, in Corollary \ref{coro2} we obtain a bound for the first time in which the auxiliary process reaches $\ladder$ and in Corollary \ref{coro3} we show that the process $(U_t^{N,w})_{t\in [0,+\infty)}$ reaches $\ladder$ instantaneously as $N\to +\infty$ for any $w \in \W$. In Lemma \ref{propcoupling} we prove that starting from a ladder list, the auxiliary process is in $\W$ with probability $1$ as $N\to +\infty$. Putting all this together we are able to prove Theorem \ref{massconcentration}.

\begin{lemma} \label{massconcentration1}
The invariant probability measure $\mu^{N}$ of the auxiliary process satisfies
$$
\mu^{N}(\W) \to 1, \text{ as } N \to +\infty.
$$
\end{lemma}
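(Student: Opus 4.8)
Show $\mu^N(\W) \to 1$ as $N \to +\infty$, where $\mu^N$ is the unique invariant measure of the auxiliary process $(\tilde U^{N,u}_t)_{t \ge 0}$.

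The plan is to exploit the ergodicity from Proposition \ref{ergodic}, so that $\mu^N(\W)$ equals the long-run fraction of time the auxiliary process spends in $\W$, and then to produce a \emph{uniform-in-starting-point} lower bound on the probability that, after a bounded number of jumps, the process lands in (and stays for a while in) $\W$. More precisely, I would argue as follows. Starting from an arbitrary list $u' \in \auxst$, consider the event that the next several jumps are all spikes of the neuron with the current maximum potential; by the argument already used in Lemma \ref{ladderreaching} and in Proposition \ref{ergodic}, after at most $N-1$ such spikes the process is at a ladder list $l \in \ladder \subset \W$. The probability of this event is bounded below by $[2(N-1)]^{-(N-1)}$ — but that decays, so the naive ``each excursion hits $\W$'' bound is useless. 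Instead I would look at the \emph{structure} of $\W$: once the process is in $\W$, the only way to leave $\W$ in a single jump is (i) a leakage, which resets some neuron to $0$, or (ii) a spike of a neuron whose potential is \emph{not} near the top, i.e. one of the $\lfloor N^{1/2}\rfloor$ neurons with the largest potentials (a spike of the single neuron with potential $0$ is forbidden, and a spike of a neuron in the ``ladder part'' $\{1,\dots,N-\lfloor N^{1/2}\rfloor\}$ keeps the configuration in $\W$, as was checked inside Lemma \ref{lemmageombound}). Wait — actually a spike of a ladder-part neuron with potential $k$ creates a \emph{hole} at $k$, so it does leave $\W$; the key quantitative point is rather that from any configuration, a short burst of top-neuron spikes restores a ladder configuration, and I want to compare the rates.

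So the cleaner route: let $p_N$ be the probability that, starting from the worst-case list in $\auxst$, the process reaches $\ladder$ within its first $2N$ jumps. By the Lemma \ref{ladderreaching}-type bound and the permutation invariance used in Proposition \ref{ergodic}, $p_N \ge [2(N-1)]^{-(N-1)}\tilde\epsilon > 0$. By Proposition \ref{ergodic} and the renewal structure at returns to $\ladder$, $\mu^N$ can be written via the excursion from a fixed ladder list $l$: $\mu^N(\W) = \mathbb E_l\big[\int_0^{R} \mathbf 1\{\tilde U^{N,l}_s \in \W\}\,ds\big]\big/\mathbb E_l[R]$, where $R$ is the return time to $\ladder$. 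The complement bound then reads $1-\mu^N(\W) = \mathbb E_l\big[\int_0^{R}\mathbf 1\{\tilde U^{N,l}_s \notin \W\}\,ds\big]\big/\mathbb E_l[R]$, and since $\mathbb E_l[R] \ge \mathbb E_l[T_1] \asymp e^{-N}$ (the rate out of a ladder list is at most $\sum_{k=1}^{N-1}(e^k+1) \asymp e^{N}$, actually I want the reciprocal — the holding time at $l$ has mean of order $e^{-(N-1)}$), I need to show the numerator is $o(e^{-(N-1)})$, or rather $o(\mathbb E_l[R])$. The natural mechanism, already isolated in Remark \ref{remarkladderreach} and Lemma \ref{couplingtime}, is that from a ladder list the process returns to a ladder configuration after $O(N^{1/2})$ jumps with probability $\to 1$, and the whole time it spends outside $\W$ during such a return is the sum of $O(N^{1/2})$ holding times, each of mean at most $e^{-(N-1)}$ (the minimal jump rate from a non-trapped list is $\ge e$), hence of total mean $O(N^{1/2}e^{-(N-1)})$, which is $o(1)$ relative to... hmm, this needs $\mathbb E_l[R]$ to be at least of constant order, which it is not obviously.

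**The main obstacle, and how I would handle it.** The delicate point is that both numerator and denominator in the ratio representation of $\mu^N(\W)$ are exponentially small, so I cannot be cavalier. The right approach is to avoid the ratio altogether and instead prove a \emph{Doeblin-type / direct coupling} estimate: show that there is a time $t_N \to 0$ (e.g. $t_N = e^{-(N-N^{1/2})}$, as in Corollary \ref{coro} and Remark \ref{remarkladderreach}) and a deterministic bound such that $\inf_{u \in \auxst}\P(\tilde U^{N,u}_{t_N} \in \ladder) \to 1$, which is exactly the content of the upcoming Corollary \ref{coro2}/Remark-type statements, combined with Lemma \ref{propcoupling} which says that from a ladder list the auxiliary process is in $\W$ with probability $\to 1$ at any fixed positive time. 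Concatenating: for fixed $t>0$, $\inf_{u}\P(\tilde U^{N,u}_t \in \W) \ge \inf_u \P(\tilde U^{N,u}_{t_N}\in\ladder)\cdot \inf_{l\in\ladder}\P(\tilde U^{N,l}_{t-t_N}\in \W) \to 1$. Then by invariance, $\mu^N(\W) = \int \P(\tilde U^{N,u}_t \in \W)\,\mu^N(du) \ge \inf_u \P(\tilde U^{N,u}_t \in \W) \to 1$. So the real work is in Lemmas \ref{massconcentration1}'s two companion results — the instantaneous hitting of $\ladder$ and the mixing from $\ladder$ into $\W$ — and the proof of Lemma \ref{massconcentration1} itself should be the short assembly: invoke ergodicity to write $\mu^N(\W) = \int\P(\tilde U^{N,u}_t \in \W)\mu^N(du)$ for an arbitrary fixed $t$, bound the integrand uniformly from below using those two facts, and let $N \to \infty$. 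I expect the genuinely technical estimate to be the ``from a ladder list, within bounded-in-$N$ number of jumps and hence within $o(1)$ real time, the process is in $\W$ with probability $\to 1$'' step, since $\W$ requires \emph{all} potentials distinct and the ladder segment $\{1,\dots,N-\lfloor N^{1/2}\rfloor\}$ present — one must control the $\lfloor N^{1/2}\rfloor$ ``free'' neurons, showing a burst of top-neuron spikes spreads them out without collisions, exactly the case analysis already rehearsed in Lemma \ref{lemmaE1}.
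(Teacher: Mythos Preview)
Your high-level framework is exactly right, and it is what the paper does: use invariance to write $\mu^N(\W)=\sum_{u}\mu^N(u)\,\P(\tilde U^{N,u}_{t}\in\W)$ for some well-chosen $t=t(N)$, and then push through a uniform-in-$u$ lower bound on the integrand. The problem is where you locate the work.

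\textbf{Circularity.} You propose to obtain the uniform lower bound by invoking Corollary~\ref{coro2} and Lemma~\ref{propcoupling}. In the paper's logical order both of these sit \emph{downstream} of Lemma~\ref{massconcentration1}. Corollary~\ref{coro2} is deduced from the staged estimates that constitute the proof of Lemma~\ref{massconcentration1} (its last line reads ``putting all this together with Lemma~\ref{massconcentration1}''). More fatally, the bound of Lemma~\ref{propcoupling} is
\[
\P(\tilde U^{N,l}_s\in\W^c)\ \le\ \frac{\mu^N(\W^c)}{\mu^N(\W)}+\delta(N,s),
\]
which tells you nothing about $\P(\tilde U^{N,l}_s\in\W)$ unless you already know $\mu^N(\W^c)\to 0$, i.e.\ Lemma~\ref{massconcentration1} itself. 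So your assembly step is circular; the ``companion results'' you want to quote are corollaries, not inputs.

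\textbf{Wrong time scale from worst-case lists.} You also assert that one can take $t_N\to 0$ with $\inf_{u\in\auxst}\P(\tilde U^{N,u}_{t_N}\in\ladder)\to 1$. This fails: from a list with a single neuron at potential $1$, the total spike rate is only $e$, so in time $o(1)$ the process has not even fired once with appreciable probability. The paper's uniform-in-$u$ argument needs $t_N\asymp N^{1/2}$ for the first stage.

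\textbf{What the paper actually does.} The substance of Lemma~\ref{massconcentration1} is a direct four-stage construction, with nested sets $S_N^{(1)}\supset S_N^{(2)}\supset S_N^{(3)}\supset\W$ and explicit times $N^{1/2}$, $N^{-1/4}$, $N^{-2}$, $e^{-(N-N^{1/4})}$: from any $u\in\auxst$ the process enters $S_N^{(1)}$ (few zeros) in time $N^{1/2}$; from $S_N^{(1)}$ it builds a seed of $\lceil N^{1/2}\rceil$ distinct positive potentials ($S_N^{(2)}$) in time $N^{-1/4}$; from $S_N^{(2)}$ it dominates a ladder ($S_N^{(3)}$) in time $N^{-2}$; from $S_N^{(3)}$ it lands in $\W$ in time $e^{-(N-N^{1/4})}$. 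At each stage one writes $\mu^N(S_N^{(j)})=\sum_u\mu^N(u)\,\P(\tilde U^{N,u}_{t_j}\in S_N^{(j)})$ and uses the previous stage plus a uniform estimate on the events $E^{(j)}_{N,k}$ to conclude $\mu^N(S_N^{(j)})\to 1$. This staged argument \emph{is} the content of the lemma; Corollary~\ref{coro2} and Lemma~\ref{propcoupling} then harvest it, not the other way around.
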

\begin{proof} To prove Lemma \ref{massconcentration1}, we will first show that there exists sets $S_N^{(1)}, S_N^{(2)}$ and  $S_N^{(3)}$ such that
$$
S_N^{(1)} \supset S_N^{(2)} \supset S_N^{(3)} \supset \W
$$
and for any $j\in\{1,2,3\}$,
$$
\mu^{N}(S_N^{(j)}) \to 1, \text{ as } N \to +\infty.
$$

Let
$$
S_N^{(1)}=\{u \in \auxst: \vert\{a \in \N: u(a)=0\}\vert \leq N^{\frac{1}{2}}\}
$$
and consider the following events
$$
E^{(1)}_{N,1}=\{\tilde{Z}^{*}([0,N^{\frac{1}{2}}])\geq 1\},
$$
$$
E^{(1)}_{N,2}=\{\tilde{Z}^{\dagger}([0,N^{\frac{1}{2}}])\leq N^2\},
$$
$$ 
E^{(1)}_{N,3}=\bigcap_{j=1}^{N^2/\lfloor\frac{N^{1/2}}{2}\rfloor}\left\{\tilde{Z}^{*}\left(\left[\tilde{T}^{\dagger}_{(j-1)\lfloor\frac{N^{1/2}}{2}\rfloor +1},\tilde{T}^{\dagger}_{j\lfloor\frac{N^{1/2}}{2}\rfloor }\right]\right) \geq 1\right\}.
$$

For any $u \in \auxst$, the rate in which the process has a leakage is bounded above by $N$. Moreover, the rate in which the process has a spike is bounded bellow by $e$. This implies that
\begin{equation} \label{eqmc1}
\P(E^{(1)}_{N,1}) \geq \P\left(\xi^{\{e\}}\leq N^{\frac{1}{2}} \right) \to 1, \text{ as } N \to +\infty,
\end{equation}
\begin{equation} \label{eqmc2} 
\P(E^{(1)}_{N,2}) \geq \P\left(\sum_{j=1}^{N^2}\xi_j^{\{N\}}\geq N^{\frac{1}{2}} \right) \to 1, \text{ as } N \to +\infty.
\end{equation}
For any list $u\in \auxst$ and for any instant $n\geq 1$, we have that
$$
\P\left(\tilde{O}_n=* \big\vert \tilde{U}_{\tilde{T}_{n-1}}^{N,u'}=u\right) \geq \frac{1}{2}.
$$
This implies that for any  initial list $u \in \auxst$ and for any $j=1,\ldots, N^2/\lfloor\frac{N^{1/2}}{2}\rfloor$,
$$
\P\left(\tilde{Z}^{*}\left(\left[\tilde{T}^{\dagger}_{(j-1)\lfloor\frac{N^{1/2}}{2}\rfloor +1 },\tilde{T}^{\dagger}_{j\lfloor\frac{N^{1/2}}{2}\rfloor }\right]\right) \geq 1\right) \geq \P\left(\text{Geom}\left(\frac{1}{2}\right) \leq \Big\lfloor\frac{ N^{1/2}}{2} \Big\rfloor-1\right),
$$
where $\text{Geom}\left(\frac{1}{2}\right)$ is a random variable with geometric distribution assuming values in $\{1,2,\ldots\}$ and with mean $2$. Therefore,
\begin{equation} \label{eqmc3}
\P(E^{(1)}_{N,3}) \geq \left(1-2^{-\lfloor\frac{ N^{1/2}}{2} \rfloor+1}\right)^{N^2/\lfloor\frac{N^{1/2}}{2}\rfloor} 
\to 1, \text{ as } N \to +\infty.
\end{equation}

If the event $E^{(1)}_{N,1} \cap E^{(1)}_{N,2} \cap E^{(1)}_{N,3}$ occurs, then until time $N^{1/2}$ the process has at least one spiking time and at most $\lfloor N^{1/2} \rfloor-1$ successive leakage times (with no spiking times in between). This implies that $\tilde{U}^{N,u}_{N^{1/2}} \in S_N^{(1)}$.
Since the inequalities of Equations \eqref{eqmc1} , \eqref{eqmc2} and  \eqref{eqmc3} holds for any $u \in \auxst$ and they do not depend on $u$, it follows that
$$
\sup_{u \in \auxst}\P(\tilde{U}^{N,u}_{N^{1/2}} \notin S_N^{(1)}) \to 0, \text{ as } N \to +\infty,
$$
and as a consequence,
$$
\mu^N(S_N^{(1)})=\sum_{u \in \auxst}\mu^N(u)\P(\tilde{U}^{N,u}_{N^{1/2}} \in S_N^{(1)}) \to 1, \text{ as } N \to +\infty.
$$

Let
$$
S_N^{(2)}= \left\{u \in \auxst: \exists \  a_1, \ldots, a_{\lceil N^{1/2} \rceil} \in \N \text{ s.t. } 1 \leq u(a_1) < \ldots < u\left(a_{ \lceil N^{1/2}\rceil}  \right)\right\}
$$
and consider the following events
$$ 
E^{(2)}_{N,1}=\bigcap_{j=1}^{N^2/\lfloor\frac{N^{1/2}}{2}\rfloor}\left\{\tilde{Z}^{*}\left(\left[\tilde{T}^{\dagger}_{(j-1)\lfloor\frac{N^{1/2}}{2}\rfloor +1},\tilde{T}^{\dagger}_{j\lfloor\frac{N^{1/2}}{2}\rfloor }\right]\right) \geq 1\right\},
$$
$$
E^{(2)}_{N,2}=\{\tilde{Z}^{\dagger}([0,N^{-\frac{1}{4}}])\leq N^2\},
$$
$$
E^{(2)}_{N,3}=\{\tilde{Z}^{*}([0,N^{-\frac{1}{4}}])\geq \lceil N^{1/2}\rceil\}.
$$

As in Equation \eqref{eqmc2} and \eqref{eqmc3},
\begin{equation}  \label{eqmc21}
\P(E^{(2)}_{N,1}) \geq \left(1-2^{-\lfloor\frac{ N^{1/2}}{2} \rfloor+1}\right)^{N^2/\lfloor\frac{N^{1/2}}{2}\rfloor} 
\to 1, \text{ as } N \to +\infty,
\end{equation}
\begin{equation} \label{eqmc22} 
\P(E^{(2)}_{N,2}) \geq \P\left(\sum_{j=1}^{N^2 }\xi_j^{\{N\}}\geq N^{-\frac{1}{4}} \right) \to 1, \text{ as } N \to +\infty.
\end{equation}

For any initial list $u \in  S_N^{(1)}$, the occurrence of $E^{(2)}_{N,1}\cap E^{(2)}_{N,2}$ implies that until time $N^{-1/4}$ the rate in which the process has a spike is bounded bellow by $e(N-2\lceil N^{1/2} \rceil)$. This implies that
$$ 
\P(E^{(2)}_{N,3}) \geq \P\left(\sum_{j=1}^{\lceil N^{1/2}\rceil}\xi_j^{\{N-2\lceil N^{1/2} \rceil\}}\leq N^{-\frac{1}{4}} \right)\P\left(E^{(2)}_{N,1}\cap E^{(2)}_{N,2}\right) \to 1, \text{ as } N \to +\infty.
$$

For any initial list $u \in S_N^{(1)}$, if the event $E^{(2)}_{N,1} \cap E^{(2)}_{N,2} \cap E^{(2)}_{N,3}$ occurs, then until time $N^{-1/4}$ the process has at least $\lceil N^{1/2} \rceil $ spiking times and at most $\lfloor N^{1/2} \rfloor-1$ successive leakage times (with no spiking times in between). This implies that $\tilde{U}^{N,u}_{N^{-1/4}} \in S_N^{(2)}$.
Therefore,
$$
\sup_{u \in S_N^{(1)}}\P(\tilde{U}^{N,u}_{N^{-1/4}} \notin S_N^{(2)}) \to 0, \text{ as } N \to +\infty,
$$
and as a consequence,
$$
\mu^N(S_N^{(2)})=\sum_{u \in \auxst}\mu^N(u)\P(\tilde{U}^{N,u}_{N^{-1/4}} \in S_N^{(2)}) \to 1, \text{ as } N \to +\infty.
$$

Let
$$
S_N^{(3)}=\{u \in \auxst: u(a_j^u) \geq j-1, \text{ for all } j=1,\ldots, N\}
$$
and consider the following events
$$
E^{(3)}_{N,1}=\{\tilde{Z}^{\dagger}([0,N^{-2}])=0\},
$$
$$
E^{(3)}_{N,2}=\{\tilde{Z}^{*}([0,N^{-2}])\geq N\}.
$$
For any $u \in S_N^{(2)} $,
$$
\P(E^{(3)}_{N,1}) \geq \P\left(\xi^{\{N\}}\geq N^{-2} \right) \to 1, \text{ as } N \to +\infty,
$$
For any initial list $u \in  S_N^{(2)}$, the occurrence of the event $E^{(3)}_{N,1}$ implies that until time $N^{-2}$ the rate in which the process has a spike is bounded bellow by $e^{ \lfloor N^{1/2} \rfloor}$. This implies that
$$ 
\P(E^{(3)}_{N,2}) \geq \P\left(\sum_{j=1}^{N}\xi_j^{\{e^{ \lfloor N^{1/2} \rfloor}\}}\leq N^{-2} \right)\P\left(E^{(3)}_{N,1}\right) \to 1, \text{ as } N \to +\infty.
$$

For any initial list $u \in S_N^{(2)}$, if the event $E^{(3)}_{N,1} \cap E^{(3)}_{N,2}$ occurs, then until time $N^{-2}$ the process has at least $N$ spiking times and does not have any leakage. This implies that $\tilde{U}^{N,u}_{N^{-2}} \in S_N^{(3)}$.
Therefore,
$$
\sup_{u \in S_N^{(2)}}\P(\tilde{U}^{N,u}_{N^{-2}} \notin S_N^{(3)}) \to 0, \text{ as } N \to +\infty,
$$
and as a consequence,
$$
\mu^N(S_N^{(3)})=\sum_{u \in \auxst}\mu^N(u)\P(\tilde{U}^{N,u}_{N^{-2}} \in S_N^{(3)}) \to 1, \text{ as } N \to +\infty.
$$

Recall that
$$
\W=
\left\{u \in \st: I_N\subset \{u(a): a \in \N\}
,\ \bigcap_{a \in \N} \bigcap_{b \neq a} \{u(a) \neq u(b)\}
\right\},
$$
where $I_N=\left\{1,\ldots, N - \lfloor N^{1/2} \rfloor \right\}$,
and consider the following events
$$
E_{N,1}^{(4)}=\{\tilde{Z}^{\dagger}([0,e^{- (N- N^{1/4})}])=0\},
$$
$$
E_{N,2}^{(4)}=\left\{\sum_{j=1}^{+\infty}\mathbf{1}\{\tilde{T}_j \leq e^{- (N- N^{1/4})}, \tilde{U}^{N,u}_{\tilde{T}_{j-1}}(\tilde{A}_j) \leq N-\lfloor N^{1/2} \rfloor, O_j =* \} = 0 \right\},
$$
$$
E_{N,3}^{(4)}=\{\tilde{Z}^{*}([0,e^{- (N- N^{1/4}) }])\geq N+\lceil N^{1/2} \rceil \},
$$
$$
E_{N,4}^{(4)}=\bigcap_{j=1}^{N+ \lceil N^{1/2} \rceil }\left\{ 
\tilde{U}^{N,u}_{\tilde{T}_{j-1}}(\tilde{A}_j)\geq \max\left\{\tilde{U}^{N,u}_{\tilde{T}_{j-1}}(a): a \in \N\right\}- \lfloor N^{1/2} \rfloor \right\}.
$$
For any $u \in \auxst$,
$$
\P(E^{(4)}_{N,1}) \geq \P\left(\xi^{\{N\}}\geq e^{- (N- N^{1/4})} \right) \to 1, \text{ as } N \to +\infty.
$$
The rate in which the process has a spike of a neuron that in the moment of the spike have membrane potential smaller or equal $N-\lfloor N^{1/2} \rfloor$ is bounded above by $N e^{  N- \lfloor N^{1/2} \rfloor}$. Therefore,
$$ 
\P(E^{(4)}_{N,2}) \geq \P\left(\xi^{\{N e^{ N- \lfloor N^{1/2} \rfloor}\}} >e^{- (N- N^{1/4})} \right) \to 1, \text{ as } N \to +\infty.
$$

For any $u \in S_N^{(3)}$, the occurrence of the event $E^{(4)}_{N,1}$ implies that the rate in which the process has a spike is bounded bellow by $e^{ (N-1)}$. Therefore,
$$ 
\P(E^{(4)}_{N,3}) \geq \P\left(\sum_{j=1}^{ N+\lceil N^{1/2} \rceil }\xi_j^{\{e^{ (N-1)}\}}\leq e^{- (N- N^{1/4})} \right)\P\left(E^{(4)}_{N,1}\right) \to 1, \text{ as } N \to +\infty.
$$
Moreover, the probability
\begin{equation}\label{e4n2bound}
\P\left(\tilde{U}^{N,u}_{\tilde{T}_{0}}(\tilde{A}_1)\geq \max\left\{u(a): a \in \N\right\}- \lfloor N^{1/2} \rfloor  \right)
\end{equation}
is minimized when the difference between the membrane potential of the neuron with greatest potential and the membrane potential of the other neurons is $\lfloor N^{1/2} \rfloor+1$. This implies that \eqref{e4n2bound} is bounded bellow by
$$
\frac{e^{ \lfloor N^{1/2} \rfloor}}{e^{ \lfloor N^{1/2} \rfloor}+2(N-1)},
$$
and therefore, 
$$ 
\P(E^{(4)}_{N,4}) \geq \left(\frac{e^{ \lfloor N^{1/2} \rfloor}}{e^{ \lfloor N^{1/2} \rfloor}+2(N-1)}\right)^{ N+\lceil N^{1/2} \rceil}\P\left(E^{(4)}_{N,1}\cap E^{(4)}_{N,3}\right) \to 1, \text{ as } N \to +\infty.
$$

For any initial list $u \in  S_N^{(3)}$, if the event $E^{(4)}_{N,1} \cap \ldots \cap E^{(4)}_{N,4}$ occurs, then until time $e^{- (N-N^{1/4})}$ the process has at least $N+\lceil N^{1/2} \rceil$ spiking times, does not have any leakage of membrane potential and does not have any spike of a neuron with membrane potential smaller or equal $ N- \lfloor N^{1/2} \rfloor $. This implies that 
$$
\left\{1,\ldots, N - \lfloor N^{1/2} \rfloor \right\} \subset \left\{\tilde{U}^{N,u}_{e^{- (N-N^{1/4})}}(a): a \in \N\right\}.
$$
Moreover, the occurrence of the events $E^{(4)}_{N,1}\cap E^{(4)}_{N,3} \cap E^{(4)}_{N,4}$ implies that  all neurons spikes at least once in the first $ N+\lceil N^{1/2} \rceil$ steps of the process.  This implies that $\tilde{U}^{N,u}_{e^{- (N-N^{1/4})}}(a) \neq \tilde{U}^{N,u}_{e^{- (N-N^{1/4})}}(a')$, for all $a\neq a'$. We conclude that  $\tilde{U}^{N,u}_{e^{- (N-N^{1/4})}} \in \W$.

Therefore,
$$
\sup_{u \in S_N^{(3)}}\P\left(\tilde{U}^{N,u}_{e^{- (N-N^{1/4})}} \notin \W\right) \to 0, \text{ as } N \to +\infty,
$$
and as a consequence,
$$
\mu^N(\W)=\sum_{u \in \auxst}\mu^N(u)\P\left(\tilde{U}^{N,u}_{e^{- (N-N^{1/4})}} \in \W\right) \to 1, \text{ as } N \to +\infty.
$$
\end{proof}

From Lemma \ref{massconcentration1} it follows Corollaries \ref{coro2} and \ref{coro3}, that are used to prove Theorems \ref{massconcentration} and \ref{metastable}.

\begin{corollary} \label{coro2} The following holds
$$
\inf_{u \in \auxst}\P\left(\inf\left\{\tilde{T}_n: \tilde{U}_{\tilde{T}_n}^{N,u} \in \ladder\right\} \leq t_N+e^{-(N-N^{1/2})}\right)\to 1, \text{ as } N \to +\infty,
$$
where $t_N=N^{1/2}+N^{-1/4}+N^{-2}+e^{-(N-N^{1/4})}$.
\end{corollary}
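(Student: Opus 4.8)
The plan is to assemble this corollary from two ingredients already in hand: the four successive one‑step estimates that appear inside the proof of Lemma \ref{massconcentration1}, which push the auxiliary process from an arbitrary list all the way into $\W$ in total time $t_N$, and the ladder‑reaching estimate of Remark \ref{remarkladderreach}, which from any list in $\W$ brings the process into $\ladder$ within an extra time $e^{-(N-N^{1/2})}$. Chaining these via the Markov property gives the claim.

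First I would record that the proof of Lemma \ref{massconcentration1} in fact establishes the \emph{uniform} bounds
$$
\sup_{u \in \auxst}\P\big(\tilde U^{N,u}_{N^{1/2}} \notin S_N^{(1)}\big)\to 0,\quad
\sup_{u \in S_N^{(1)}}\P\big(\tilde U^{N,u}_{N^{-1/4}} \notin S_N^{(2)}\big)\to 0,
$$
$$
\sup_{u \in S_N^{(2)}}\P\big(\tilde U^{N,u}_{N^{-2}} \notin S_N^{(3)}\big)\to 0,\quad
\sup_{u \in S_N^{(3)}}\P\big(\tilde U^{N,u}_{e^{-(N-N^{1/4})}} \notin \W\big)\to 0 .
$$
Since $S_N^{(1)}\supset S_N^{(2)}\supset S_N^{(3)}\supset \W$, applying the Markov property successively at the times $N^{1/2}$, $N^{1/2}+N^{-1/4}$ and $N^{1/2}+N^{-1/4}+N^{-2}$ and summing the four error probabilities yields $\sup_{u \in \auxst}\P\big(\tilde U^{N,u}_{t_N}\notin\W\big)\to 0$ with $t_N$ as in the statement.

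Next I would transfer Remark \ref{remarkladderreach} from $(U^{N,w}_t)$ to $(\tilde U^{N,w}_t)$. Because $\ladder$ contains no null list and $\nlist$ is a trap, if $(U^{N,w}_t)$ hits $\ladder$ at some time $s^\ast$ then necessarily $s^\ast<\tau^{N,w}$; by the coupling of Remark \ref{remark1} the two processes agree on $[0,\tau^{N,w})$, so $\tilde U^{N,w}_{s^\ast}=U^{N,w}_{s^\ast}\in\ladder$. Hence $\P\big(\inf\{s>0:\tilde U^{N,w}_s\in\ladder\}>e^{-(N-N^{1/2})}\big)\le \P\big(\inf\{s>0:U^{N,w}_s\in\ladder\}>e^{-(N-N^{1/2})}\big)$, and Remark \ref{remarkladderreach} gives that the right‑hand side tends to $0$ uniformly over $w\in\W$.

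Finally, conditioning on $\tilde U^{N,u}_{t_N}$ and using the Markov property once more at time $t_N$: off an event of probability $o(1)$ uniform in $u\in\auxst$ we have $\tilde U^{N,u}_{t_N}\in\W$, and then off a further event of probability $o(1)$ uniform over the starting point in $\W$ the process is in $\ladder$ at some time $s\le t_N+e^{-(N-N^{1/2})}$; since $(\tilde U^{N,u}_t)$ is a pure jump process, $\tilde U^{N,u}_s=\tilde U^{N,u}_{\tilde T_n}$ for the last jump time $\tilde T_n\le s$, so $\inf\{\tilde T_n:\tilde U^{N,u}_{\tilde T_n}\in\ladder\}\le t_N+e^{-(N-N^{1/2})}$. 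The only delicate point is the bookkeeping: checking that the four estimates inside Lemma \ref{massconcentration1} are genuinely uniform over the relevant starting sets so that they chain cleanly, and that the transfer of Remark \ref{remarkladderreach} through the coupling is legitimate; beyond that the argument is a routine concatenation and I expect no real obstacle.
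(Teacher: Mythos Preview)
Your proposal is correct and follows essentially the same approach as the paper: chain the four uniform estimates from the proof of Lemma~\ref{massconcentration1} to reach $\W$ by time $t_N$, then invoke Remark~\ref{remarkladderreach} to reach $\ladder$ in the additional time $e^{-(N-N^{1/2})}$, and glue the two pieces with the Markov property. Your version is in fact slightly more explicit than the paper's, since you spell out how to transfer Remark~\ref{remarkladderreach} from $(U^{N,w}_t)$ to $(\tilde U^{N,w}_t)$ via the coupling of Remark~\ref{remark1}, a step the paper leaves implicit.
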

\begin{proof}
First, note that for any $u \in \auxst$, 
$$
\P\left(\inf\left\{\tilde{T}_n: \tilde{U}_{\tilde{T}_n}^{N,u} \in \W\right\} \leq t_N
\right) \geq 
$$
$$
\P\left(\tilde{U}_{N^{1/2}}^{N,u} \in S_N^{(1)},\ \tilde{U}_{N^{1/2}+N^{-1/4}}^{N,u} \in S_N^{(2)}, \ \tilde{U}_{N^{1/2}+N^{-1/4}+N^{-2}}^{N,u} \in S_N^{(3)}, \ \tilde{U}_{t_N}^{N,u} \in \W\right). 
$$
Remark \ref{remarkladderreach} implies that for any $w \in \W$,
$$
\inf_{w \in \W}\P\left(\inf\left\{\tilde{T}_n: \tilde{U}_{\tilde{T}_n}^{N,w} \in \ladder\right\} \leq e^{-(N-N^{1/2})} \right) \to 1, \text{ as } N \to +\infty.
$$
We conclude the proof by putting all this together with Lemma \ref{massconcentration1} and Markov property.
\end{proof}

\begin{corollary} \label{coro3} 
The following holds
$$
\inf_{u \in S_N^{(0)}}\P\left(\inf\left\{T_n: U_{T_n}^{N,u} \in \ladder\right\} \leq t'_N\right)\to 1, \text{ as } N \to +\infty,
$$
where $t'_N=N^{-1/4}+N^{-2}+e^{-(N-N^{1/4})}+e^{-(N-N^{1/2})}$.
\end{corollary}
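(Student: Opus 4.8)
The plan is to follow the proof of Corollary~\ref{coro2}, with two changes: we replace the auxiliary process $(\tilde U_t^{N,u})$ by the true process $(U_t^{N,u})$ at the very end, and we show that from a configuration in $S_N^{(0)}$ the process reaches the intermediate set $S_N^{(2)}$ already within time $N^{-1/4}$, thereby dispensing with the $N^{1/2}$ term that appears in $t_N$ but not in $t'_N$. That term was the time needed in Lemma~\ref{massconcentration1} to go from an arbitrary list in $\auxst$ to $S_N^{(1)}$, and it was large only because the spiking rate there could be $O(1)$. Starting instead from $S_N^{(0)}$ there are already $\lfloor N^{1/2}\rfloor$ neurons with potential at least $1$, so the total spiking rate is at least $e\lfloor N^{1/2}\rfloor$, and after a single spike there are $N-1$ neurons with positive potential, so the spiking rate is then at least $e(N-1)$. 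This lets us fold the step $\auxst\to S_N^{(1)}$ into the step $S_N^{(1)}\to S_N^{(2)}$ at no extra time cost.

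First I would prove $\inf_{u\in S_N^{(0)}}\P(\tilde U_{N^{-1/4}}^{N,u}\in S_N^{(2)})\to1$ as $N\to+\infty$. The argument is the one used for the step $S_N^{(1)}\to S_N^{(2)}$ in the proof of Lemma~\ref{massconcentration1} (events of the type $E^{(2)}_{N,1},E^{(2)}_{N,2},E^{(2)}_{N,3}$), except that we require $\lceil N^{1/2}\rceil+1$ spikes rather than $\lceil N^{1/2}\rceil$, the extra spike being the one that carries the process from $S_N^{(0)}$ into $S_N^{(1)}$. The time bound is immediate: the waiting time until the first spike is stochastically dominated by $\xi^{\{e\lfloor N^{1/2}\rfloor\}}$ and the waiting times for the following $\lceil N^{1/2}\rceil$ spikes by $\xi_j^{\{e(N-2\lceil N^{1/2}\rceil)\}}$, so $\lceil N^{1/2}\rceil+1$ spikes occur before time $N^{-1/4}$ with probability tending to $1$, uniformly in $u\in S_N^{(0)}$; a block estimate as for $E^{(2)}_{N,1}$ excludes any run of $\lfloor N^{1/2}\rfloor-1$ consecutive leakages, and together these force $\tilde U_{N^{-1/4}}^{N,u}\in S_N^{(2)}$.

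Next, exactly as in the proof of Corollary~\ref{coro2}, I would combine this with the steps $S_N^{(2)}\to S_N^{(3)}$ (time $N^{-2}$) and $S_N^{(3)}\to\W$ (time $e^{-(N-N^{1/4})}$) from Lemma~\ref{massconcentration1}, and with $\inf_{w\in\W}\P(\inf\{\tilde T_n:\tilde U_{\tilde T_n}^{N,w}\in\ladder\}\le e^{-(N-N^{1/2})})\to1$ from Remark~\ref{remarkladderreach}; the Markov property then yields $\inf_{u\in S_N^{(0)}}\P(\inf\{\tilde T_n:\tilde U_{\tilde T_n}^{N,u}\in\ladder\}\le t'_N)\to1$. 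To transfer this to the true process, recall from Remark~\ref{remark1} that $U_s^{N,u}=\tilde U_s^{N,u}$ for every $s<\tau^{N,u}$, and that no ladder list equals $\nlist$; hence on $\{\tau^{N,u}>t'_N\}$ the true process reaches $\ladder$ within $t'_N$ whenever $\tilde U^{N,u}$ does, so that $\P(\inf\{T_n:U_{T_n}^{N,u}\in\ladder\}\le t'_N)\ge\P(\inf\{\tilde T_n:\tilde U_{\tilde T_n}^{N,u}\in\ladder\}\le t'_N)-\P(\tau^{N,u}\le t'_N)$.

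It thus remains to prove $\sup_{u\in S_N^{(0)}}\P(\tau^{N,u}\le t'_N)\to0$, and this is the step I expect to be the main obstacle, because it must hold uniformly over all of $S_N^{(0)}$ — in particular over configurations with only $\lfloor N^{1/2}\rfloor$ neurons, all of potential $1$, where the spiking rate is merely of order $N^{1/2}$, and one must moreover rule out a trapping excursion over an a priori unbounded number of jumps. The key observation is that a state-changing leakage removes exactly one neuron from the set of neurons with positive potential, whereas every spike resets that set to size $N-1$; hence $\nlist$ can be reached only along a run of consecutive state-changing leakages whose length is at least the number of neurons with positive potential at the start of the run — at least $\lfloor N^{1/2}\rfloor$ before the first spike and at least $N-1$ afterwards. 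Since at every jump the probability that it is a leakage is at most $k/(k+ek)=(1+e)^{-1}$, where $k\ge1$ is the current number of neurons with positive potential (the total spiking rate being at least $ek$), the first leakage run has length at least $\lfloor N^{1/2}\rfloor$ with probability at most $(1+e)^{-\lfloor N^{1/2}\rfloor}$; a union bound over the leakage runs occurring before the process enters $S_N^{(2)}$, each of the later ones needing length $N-1$, together with the fact that once the process is in $S_N^{(2)}$ the maximal potential is at least $\lfloor N^{1/2}\rfloor$ — so a leakage then has probability at most $Ne^{-\lfloor N^{1/2}\rfloor}$ per jump and, with probability tending to $1$, no leakage at all occurs before $\ladder$ is reached — yields $\sup_{u\in S_N^{(0)}}\P(\tau^{N,u}\le t'_N)\to0$. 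This completes the proof.
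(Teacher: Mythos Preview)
Your proposal is correct and follows the same route as the paper: for the auxiliary process $\tilde U$, chain $S_N^{(0)}\to S_N^{(2)}\to S_N^{(3)}\to\W\to\ladder$ via the events of Lemma~\ref{massconcentration1} (skipping the $N^{1/2}$-long step $\auxst\to S_N^{(1)}$, which is unnecessary from $S_N^{(0)}$), then transfer to $U$. The only substantive difference is in this last transfer. You bound $\P(\tau^{N,u}\le t'_N)$ by a separate combinatorial argument on runs of consecutive leakages; the paper instead observes that the events $E_{N,1}^{(2)},E_{N,1}^{(3)},E_{N,1}^{(4)}$ already established force the auxiliary process to keep strictly more than one neuron with positive potential throughout $[0,t'_N]$, so by Remark~\ref{remark1} one gets $U_t=\tilde U_t$ on $[0,t'_N]$ immediately on those events. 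The paper's version is shorter because it recycles work already done; yours is self-contained but, as you note, needs an explicit bound on the number of leakage runs (available from $E_{N,2}^{(2)}$). One small technical slip: the first spike time is \emph{not} stochastically dominated by $\xi^{\{e\lfloor N^{1/2}\rfloor\}}$, since leakages before the first spike can lower the spike rate. A correct replacement is that with probability at least $1-(1+e)^{-\lfloor N^{1/2}/2\rfloor}$ the first spike occurs within the first $\lfloor N^{1/2}/2\rfloor$ jumps, during which the total jump rate stays at least $(e+1)\lfloor N^{1/2}\rfloor/2$; this gives $\tilde T_1^*\le N^{-1/4}/2$ with probability tending to $1$, which is all you need.
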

\begin{proof}
Note that starting from any list $u \in S_N^{(0)}$, as in the proof of Lemma \ref{massconcentration1} we have that
$$
\P\left(E_{N,1}^{(2)} \cap E_{N,2}^{(2)} \cap E_{N,3}^{(2)}\right)\to 1, \text{ as } N \to +\infty.
$$
Then, as in Corollary \ref{coro2} we have that
\begin{equation} \label{coro3eq}
\inf_{u \in S_N^{(0)}}\P\left(\inf\left\{\tilde{T}_n: \tilde{U}_{\tilde{T}_n}^{N,u} \in \ladder\right\} \leq t'_N 
\right)\to 1, \text{ as } N \to +\infty.
\end{equation}

By the definition of the events 
$
E_{N,1}^{(2)}, E_{N,1}^{(3)} \text{ and } E_{N,1}^{(4)},
$
we have that
$$
\inf_{u \in S_N^{(0)}}\P\left(\inf\left\{\tilde{T}_n: \left\lvert a \in \N:\tilde{U}_{\tilde{T}_n}^{N,u}(a)>0\right\rvert =1\right\} > t'_N\right)\to 1, \text{ as } N \to +\infty.
$$
By Remark \ref{remark1} and the coupling construction, it follows that
$$
\inf_{u \in S_N^{(0)}}\P\left(U_t^{N,u}=\tilde{U}_t^{N,u}, \text{ for all } t \in [0,t'_N
]\right) \to 1, \text{ as } N \to +\infty.
$$
Therefore, we can replace $\tilde{U}$ by $U$ and $\tilde{T}_n$ by $T_n$ on Equation \eqref{coro3eq} and with this we concluded the proof.

\end{proof}

\begin{lemma} \label{propcoupling}
For any $N \geq 2$, for any list $l \in \ladder$ and for any $s>0$,
$$
\P(\tilde{U}^{N,l}_s \in \W^c) \leq \frac{\mu^N(\W^c)}{\mu^N(\W)}+\delta(N,s),
$$
where $\lim_{N\to +\infty}\delta(N,s)=0$, for any $s>0$.
\end{lemma}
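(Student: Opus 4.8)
The plan is to combine the invariance of $\mu^N$ with a uniform coupling estimate over $\W$. Since $\mu^N$ is the invariant probability measure of the auxiliary process (Proposition \ref{ergodic}), for every $s>0$ we have
$$
\mu^N(\W)=\int_{\auxst}\mu^N(dv)\,\P\big(\tilde U^{N,v}_s\in\W\big)\le\int_{\W}\mu^N(dv)\,\P\big(\tilde U^{N,v}_s\in\W\big)+\mu^N(\W^c),
$$
so it is enough to show that $\P(\tilde U^{N,v}_s\in\W)$ is, up to an error $\delta(N,s)$ with $\lim_{N\to+\infty}\delta(N,s)=0$, the same for every starting point $v\in\W$. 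Replacing $\P(\tilde U^{N,v}_s\in\W)$ by $\P(\tilde U^{N,l}_s\in\W)+\delta(N,s)$ inside the integral and rearranging will then give the statement.

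To obtain the uniform estimate I would run the coupling construction of Section \ref{sec:auxiliary} for the auxiliary process, started from $v\in\W$ and from $l\in\ladder$. By Remark \ref{remark1} the auxiliary process has the same jump rates as $(U^{N,\cdot}_t)$ away from the null list, and the events driving the coupling — successive spikes of the neuron with maximal membrane potential, as in $E_1$ — never bring either copy close to $\nlist$ when started in $\W$; hence Lemmas \ref{lemmaE1}, \ref{lemmageombound}, \ref{couplingtime} and Corollary \ref{coro} hold with $U$ replaced by $\tilde U$. In particular, with probability at least $1-\delta(N,s)$ (here $\delta(N,s)\to0$ for fixed $s$, because the coupling time is at most $e^{-(N-N^{1/2})}$ with probability tending to $1$) both copies reach $\ladder$ simultaneously before time $s$, and from that moment on they are permutations of one another. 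Since $\W$ is invariant under permutations of $\N$, on this event $\tilde U^{N,v}_s\in\W$ if and only if $\tilde U^{N,l}_s\in\W$, so
$$
\big|\P(\tilde U^{N,v}_s\in\W)-\P(\tilde U^{N,l}_s\in\W)\big|\le\delta(N,s)\qquad\text{for all }v\in\W,\ l\in\ladder.
$$

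Integrating this bound over $v\in\W$ against $\mu^N$ and inserting it in the first display yields $\mu^N(\W)-\mu^N(\W^c)\le\mu^N(\W)\big(\P(\tilde U^{N,l}_s\in\W)+\delta(N,s)\big)$, hence (dividing by $\mu^N(\W)>0$) $\P(\tilde U^{N,l}_s\in\W)\ge1-\mu^N(\W^c)/\mu^N(\W)-\delta(N,s)$, which is exactly the claimed inequality for $\W^c$. The main obstacle I anticipate is the coupling step: one must check carefully that the construction of Section \ref{sec:auxiliary}, built for $(U^{N,\cdot}_t)$, genuinely transfers to the auxiliary process — that along the coupling events the auxiliary and original dynamics coincide (so the permutation produced by the coupling persists for all later times) and that the quantitative bounds of Lemmas \ref{lemmageombound}--\ref{couplingtime}, which are what force $\delta(N,s)\to0$, are unaffected by the modification of the leakage map near $\nlist$. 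The rest is bookkeeping with the invariance identity.
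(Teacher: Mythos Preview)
Your proposal is correct and follows essentially the same route as the paper: both start from the invariance identity $\mu^N(\W)=\sum_{u}\mu^N(u)\P(\tilde U^{N,u}_s\in\W)$, bound the contribution from $\W^c$ trivially by $\mu^N(\W^c)$, and then use the coupling of Section~\ref{sec:auxiliary} (transferred to $\tilde U$ via Remark~\ref{remark1}, Lemma~\ref{couplingtime} and Corollary~\ref{coro}) to replace $\P(\tilde U^{N,w}_s\in\W)$ by $\P(\tilde U^{N,l}_s\in\W)+\delta(N,s)$ uniformly over $w\in\W$, after which the algebra is identical. The only cosmetic difference is that the paper states the one-sided inequality $\P(\tilde U^{N,w}_s\in\W)\le\P(\tilde U^{N,l}_s\in\W)+\delta(N,s)$ directly, whereas you phrase it as a two-sided bound; your explicit discussion of why the coupling lemmas carry over to the auxiliary process (no leakage occurs along the coupling events, so $\tilde U$ and $U$ coincide there, and permutation-equivariance propagates the coupling afterward) is in fact more careful than the paper's terse ``By Remark~\ref{remark1}''.
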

\begin{proof} For any $s>0$,
$$
\mu^{N}(\W)=\sum_{u \in \W}\mu^{N}(u)\P(\tilde{U}^{N,u}_s \in \W)+\sum_{u \in \auxst \setminus \W}\mu^{N}(u)\P(\tilde{U}^{N,u}_s \in \W).
$$
By Remark \ref{remark1}, for any $l \in \ladder$ and $w \in \W$ we have
$$
\P(\tilde{U}^{N,w}_s \in \W)\leq
$$
$$
\P(\tilde{U}^{N,l}_s \in \W)+
\P\left(\{T_{\mathcal{N}^N_C(l,w)}(l,w)>s\} \cup \{\mathcal{N}^N_C(l,w) > \mathcal{N}^N_{\dagger}(l,w)\}\right).
$$
Considering
$$
\delta(N,s)=\sup_{l \in \ladder}\sup_{w \in \W}\P\left(\{T_{\mathcal{N}^N_C(l,w)}(l,w)>s\}\cup \{\mathcal{N}^N_C(l,w) > \mathcal{N}^N_{\dagger}(l,w)\}\right),
$$
by 
Lemma \ref{couplingtime} and Corollary \ref{coro} it follows that $\displaystyle\lim_{N\to +\infty}\delta(N,s)=0$, for any $s>0$.
Moreover,
$$
\sum_{u \in \auxst \setminus \W}\mu^{N}(u)\P(\tilde{U}^{N,u}_s \in \ladder) \leq 1-\mu^{N}(\W).
$$
This implies that
$$
\mu^{N}(\W) \leq \mu^{N}(\W)(\P(\tilde{U}^{N,l}_s \in \W)+\delta(N,s))+(1-\mu^{N}(\W)),
$$
and therefore, 
$$
\P(U^{N,l}_s \in \W)\geq \frac{\mu^{N}(\W)-(1-\mu^{N}(\W))}{\mu^{N}(\W)}-\delta(N,s).
$$
With this we concluded the proof of Lemma \ref{propcoupling}.

\end{proof}

\begin{proof} Now we will prove Theorem \ref{massconcentration}.

By Remark \ref{remark1} and the invariance by permutation of the process it follows that for any $u \in S_N^{(0)}$, for any $l \in \ladder$ and for any $t>0$,
$$
\P\left(U_{t }^{N,u}\in \st\setminus \W \ \Big\vert\ \tau^{N,u}>t \right)\leq
$$
$$
\P(\inf\{t>0:U_t^{N,u} \in \ladder\} > t/2)+\sup_{s \in [t/2,t]}\P\left(\tilde{U}_{s}^{N,l}\in \auxst\setminus\W\right).
$$
By Corollary \ref{coro3},
$$
\sup_{u \in S_N^{(0)}}\P(\inf\{t>0:U_t^{N,u} \in \ladder\} > t/2) \to 0, \text{ as } N \to +\infty.
$$
By Lemma \ref{propcoupling},
$$
\sup_{s \in [t/2,t]}\P\left(\tilde{U}_{s}^{N,l}\in \auxst\setminus\W\right) \leq \frac{\mu^N(\W^c)}{\mu^N(\W)}+\delta(N,t/2).
$$
By Lemmas \ref{massconcentration1} and \ref{propcoupling} it follows that
$$
\lim_{N\to +\infty} \delta(N,t/2) =\lim_{N\to +\infty}\mu^N(\W^c)=0
$$
and with this we concluded the proof.

\end{proof}

\begin{remark}\label{remark3}
For any $N \geq 2$, $\P(\inf\{t>0:U_t^{N,u} \in \ladder\} > t/2)$ and $\delta(N,t/2)$ decreases with $t$. This implies that 
for any $(t_N:N \geq 2)$ such that $\displaystyle\lim_{N\to +\infty}t_N=+\infty$, we have
$$
\inf_{u \in S_N^{(0)}}\P\left(U_{t_N }^{N,u}\in \W \ \vert \  \tau^{N,u}>t_N \right) \to 1, \text{ as } N \to +\infty.
$$

\end{remark}

\begin{remark}
The important feature used in the proof of Theorem \ref{massconcentration} is that the exponential growth of the spiking rate produces the mechanism that leads the process to the set $\W$.
This mechanism also allows the process to instantaneously reach the set of ladder lists when starting from $\W$ as $N\to +\infty$, which is important to prove Theorem \ref{metastable}. The choice of the base is not important. The only requirement of the base is to be greater than $1$.
\end{remark}

\section{Proof of Theorem \ref{metastable}}
\label{sec:teo2}

For any fixed $l \in \ladder$, let $c_{N,l}$ be the positive real number such that
\begin{equation}\label{cbeta}
\P(
\tau^{N,l}>c_{N,l})=e^{-1}.
\end{equation}
Due to the invariance by permutation of the process, it is clear that $c_{N,l}=c_{N,l'}$, for any pair of lists $l$ and $l'$ belonging to $\ladder$. Therefore, in what follows we will omit to indicate $l$ in the notation of $c_{N}.$

To prove Theorem \ref{metastable}, we prove in Proposition \ref{explim2} that for any sequence $(l_N \in \ladder: N\geq 2)$,
$$
\frac{\tau^{N,l_N}}{c_{N}}\to \text{Exp}(1), \text{ as } N \to +\infty,
$$
where $\text{Exp}(1)$ is a random variable exponentially distributed with mean $1$. We will prove that the limiting distribution satisfies the memoryless property, which characterizes the exponential distribution. For this, it is necessary to prove that $c_N \to +\infty$ as $N\to +\infty$. This is the content of Proposition \ref{cnbound}. Lemmas \ref{lemmasup} and \ref{prop7} give the necessary conditions to replace $c_N$ by $\E[\tau^{N,l_N}]$ in Proposition \ref{explim2}. Using the fact that the process starting from any list in $\W$ will quickly reach $\ladder$ as $N\to +\infty$ we finish the proof of Theorem \ref{metastable}.


\begin{proposition} \label{cnbound}
For any $N \geq 3$,
$$
c_{N} \geq \frac{N-1+e^{ (N-2)}}{(N-1)^3}.
$$
\end{proposition}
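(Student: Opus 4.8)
Since $t\mapsto\P(\tau^{N,l}>t)$ is non-increasing and equals $e^{-1}$ at $c_{N}$, it suffices to prove that $\P(\tau^{N,l}>t_0)>e^{-1}$, where $t_0:=\frac{N-1+e^{N-2}}{(N-1)^3}$; this forces $c_{N}>t_0$, hence $c_N\ge t_0$. By the invariance by permutation the particular $l\in\ladder$ is irrelevant, so I fix $l$ with $l(a)=a-1$ for all $a\in\N$.

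The plan is to control $C_t:=|\{a\in\N:U_t^{N,l}(a)>0\}|$, the number of neurons with strictly positive potential. From the dynamics one reads off four facts: $C_0=N-1$; $C$ changes only at a spike of a positive neuron or at a leakage of a positive neuron (a leakage of a neuron at $0$ is not a jump); a spike always resets $C$ to $N-1$, since the spiking neuron goes to $0$ while every neuron that was at $0$ becomes positive, so the new count is $(N-C)+(C-1)=N-1$; a leakage of a positive neuron sends $C$ to $C-1$; and the process reaches $\nlist$ exactly when a leakage occurs while $C=1$. Bounding the probability that such a leakage happens on $[0,t_0]$ by its expected number — the point process of leakages of the unique positive neuron has intensity $\mathbf 1\{C_s=1\}$ — gives
$$
\P(\tau^{N,l}\le t_0)\ \le\ \int_0^{t_0}\P(C_s=1)\,ds\ =:\ O_1 ,
$$
so it is enough to show $O_1\le 1-e^{-1}$.

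For this I set $O_c:=\int_0^{t_0}\P(C_s=c)\,ds$ and establish the recursion $O_c\le\frac{c+1}{c(e+1)}\,O_{c+1}$ for $1\le c\le N-2$. Indeed, whenever $C=c$ the total spiking rate is at least $ce$ (each of the $c$ positive neurons has potential $\ge 1$), so the rate of leaving $\{C=c\}$ is at least $ce+c=c(e+1)$, whence $\E[\#\text{exits from }\{C=c\}]\ge c(e+1)\,O_c$. On the other hand, since $c<N-1$ and the chain starts at $N-1$, level $\{C=c\}$ can be entered only from $\{C=c+1\}$, by a leakage of one of the $c+1$ positive neurons there, so $\E[\#\text{entries into }\{C=c\}]=(c+1)\,O_{c+1}$; and the number of exits does not exceed the number of entries because the chain starts outside $\{C=c\}$. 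Combining these yields the recursion, and iterating it together with the trivial bound $O_{N-1}\le t_0$ gives
$$
O_1\ \le\ \frac{N-1}{(e+1)^{N-2}}\,t_0\ =\ \frac{N-1+e^{N-2}}{(N-1)^2\,(e+1)^{N-2}} .
$$
Since $N-1\le e^{N-2}$ for $N\ge 3$, the right-hand side is at most $\frac{2}{(N-1)^2}\big(\frac{e}{e+1}\big)^{N-2}$, which is decreasing in $N$ and equals $\frac{e}{2(e+1)}<1-e^{-1}$ at $N=3$; hence $O_1<1-e^{-1}$ for every $N\ge 3$, completing the proof.

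\textbf{Main difficulty.} The delicate point is making the bound on $O_1$ strong enough. A naive union bound over the inter-spike intervals — within each of which $C$ only decreases, so that reaching $\{C=1\}$ requires $N-2$ consecutive leakages, each of probability at most $(1+e)^{-1}$ against the next spike — is hopeless, because the number of such intervals on $[0,t_0]$ is itself of order $e^{N-1}t_0$, far larger than $(e+1)^{N-2}$. One must instead bound the \emph{expected occupation times} $O_c$ directly, by the flow/balance argument above: the count descends one unit at a time but is thrown all the way back up to $N-1$ by every spike, and at every level $c$ spikes fire at least $e$ times faster than leakages, so the occupation of level $c$ decays geometrically as $c$ decreases, which beats the factor $t_0$.
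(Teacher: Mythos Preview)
Your proof is correct, and it takes a genuinely different route from the paper's.

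The paper introduces an auxiliary stopping time $\tau_-^N\le\tau^{N,l}$, namely the first leakage time $T_n$ such that one of the previous $N-1$ jumps was also a leakage, and argues via a renewal--type decomposition at the leakage times $T_j^{\dagger}$: before $\tau_-^N$ there are at least $N-1$ spikes between any two consecutive leakages, which keeps the configuration in $S_N^{(3)}$ at each $T_j^{\dagger}$; from there a union bound gives $\P(Z^{*}([T_{j-1}^{\dagger},T_j^{\dagger}])\le N-2)\le\lambda_N=(N-1)^2/(N-1+e^{N-2})$, whence $\tau_-^N$ stochastically dominates a $\text{Geom}(\lambda_N)$-sum of $\text{Exp}(N-1)$ variables, i.e.\ an $\text{Exp}(\lambda_N(N-1))$ variable, and $c_N\ge(\lambda_N(N-1))^{-1}$.

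You instead track $C_t=|\{a:U_t^{N,l}(a)>0\}|$ directly, write $\P(\tau^{N,l}\le t_0)$ as the expected number of $1\to 0$ transitions, and control the occupation times $O_c=\int_0^{t_0}\P(C_s=c)\,ds$ by an entry/exit (compensator) balance: for $c<N-1$ one has $c(e+1)O_c\le \E[\#\text{exits from }\{C=c\}]\le \E[\#\text{entries}]=(c+1)O_{c+1}$, and telescoping plus $O_{N-1}\le t_0$ gives the result. This is more elementary and self-contained---you need neither the intermediate stopping time $\tau_-^N$ nor the structure of $S_N^{(3)}$---whereas the paper's argument gives slightly more, namely an explicit exponential tail bound $\P(\tau^{N,l}>s)\ge e^{-\lambda_N(N-1)s}$ valid for every $s\ge 0$, not just the single quantile estimate.
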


\begin{proof}
For a initial list $l \in \ladder$, let
$$
\tau_-^N=\inf\left\{T_n: O_n=\dagger,\bigcup_{j=1}^{N-1} \{O_{n-j}=\dagger\}\right\}.
$$ 
We have
$$
\tau_-^N=\sum_{j=1}^{G}(T^{\dagger}_j-T^{\dagger}_{j-1}),
$$
where 
$
G=\inf\{j: Z^{*}([T^{\dagger}_{j-1},T^{\dagger}_j])\leq N-2\}.
$

The rate in which the process has a leakage is bounded above by $N-1$. Therefore, for any $j\geq 1$ and for any $s>0$,
$$
\P(T^{\dagger}_j-T^{\dagger}_{j-1}>s) \geq \P(\xi^{\{N-1\}}>s).
$$

Recall that
$$
S_N^{(3)}=\{u \in \st: u(a_j^u) \geq j-1, \text{ for any } j=1,\ldots, N\}.
$$
For any initial list $w \in \W$,
we have that
$$
U_t^{N,w} \in S_N^{(3)}, \text{ for any } t< T_1^{\dagger}.
$$
Moreover, for any initial list $u \in \st \setminus \{\nlist\}$, if $O_1=\ldots = O_{N-1}=*$, then $U_{T_{N-1}}^{N,u} \in S_N^{(3)}$.
Together with Markov property, this implies that for any $m\geq 1$ and for any $j\geq 1$,
$$
\P(Z^{*}([T_{j-1}^{\dagger},T^{\dagger}_j])\leq N-2 \ \vert \ T_{j-1}^{\dagger}=T_m, G\geq j )=
$$
$$
\P\left(\bigcup_{j=1}^{N-1}\{O_{j+m}=\dagger\} \ \Big\vert \ O_m=\dagger, U_{T_{m-1}}^{N,u} \in  S_N^{(3)} \right).
$$
The probability on the right-hand side of equation above is bounded above by
$$
\lambda_N=(N-1) \times \frac{N-1}{N-1+e^{ (N-2)}}.
$$
Therefore, for any $s>0$,
$$
\P(\tau_-^{N}>s ) \geq \P\left(\sum_{j=1}^{\text{Geom}(\lambda_N)}\xi_j^{\{N-1\}}> t\right),
$$
where $\text{Geom}(\lambda_N)$ is a random variable independent of $(\xi_j^{\{N-1\}})_{j\geq 1}$ with Geometric distribution assuming values in $\{1,2,...\}$ with mean $1/\lambda_N$.
This implies that
$$
\P(\tau_-^N>s) \geq \P\left(\xi^{\{\lambda_{N}(N-1)\}}>s\right).
$$

Therefore,
\begin{align*}
e^{-1}=\P(\tau^{N,l}>c_{N}) \geq \P(\tau_-^N>c_{N}) \geq e^{-c_N \lambda_{N}(N-1)} ,
\end{align*}
and then,
$$
c_{N}\geq \frac{1}{\lambda_{N}(N-1)}.
$$
\end{proof}

To prove Theorem \ref{metastable}, we prove Proposition \ref{explim2} which is interesting by itself.

\begin{proposition}\label{explim2} For any sequence $(l_N \in \ladder: N\geq 2)$,
$$
\frac{\tau^{N,l_N}}{c_{N}}\to \text{Exp}(1), \text{ as } N \to +\infty,
$$
where $\text{Exp}(1)$ is a random variable exponentially distributed with mean $1$.
\end{proposition}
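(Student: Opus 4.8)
The plan is to derive an asymptotic loss-of-memory property for the rescaled trapping time and then invoke the characterization of the exponential law as the unique memoryless distribution. Throughout, write
$$
G_N(s)=\P\big(\tau^{N,l_N}>s\,c_N\big),\qquad s\geq 0 .
$$
The function $G_N$ is non-increasing, $G_N(0)=1$, and since $l_N\in\ladder$ the defining relation \eqref{cbeta} gives $G_N(1)=e^{-1}$. Convergence in distribution of $\tau^{N,l_N}/c_N$ to $\text{Exp}(1)$ is exactly the statement that $G_N(s)\to e^{-s}$ for every $s>0$, which is what I will prove. I first record that $c_N\to+\infty$: this is immediate from Proposition \ref{cnbound}, since $(N-1+e^{N-2})/(N-1)^3\to+\infty$; in particular $s\,c_N\to+\infty$ for every fixed $s>0$.

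The core step is the approximate memoryless relation: for all fixed $s,t>0$,
$$
G_N(s+t)=G_N(s)\,G_N(t)+\eta_N(s,t),\qquad \lim_{N\to+\infty}\eta_N(s,t)=0 .
$$
To prove it, apply the Markov property at time $s\,c_N$:
$$
G_N(s+t)=\sum_{w}\P\big(\tau^{N,l_N}>s\,c_N,\ U^{N,l_N}_{s c_N}=w\big)\,\P\big(\tau^{N,w}>t\,c_N\big),
$$
and split the sum according to whether $w\in\W$. Since $\ladder\subseteq S_N^{(0)}$ and $s\,c_N\to+\infty$, Remark \ref{remark3} applied with $u=l_N$ and $t_N=s\,c_N$ yields $\P\big(U^{N,l_N}_{s c_N}\in\W\mid\tau^{N,l_N}>s\,c_N\big)=1-\rho_N(s)$ with $\rho_N(s)\to 0$; hence the terms with $w\notin\W$ contribute at most $G_N(s)\,\rho_N(s)$. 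For the terms with $w\in\W$, note that $\ladder\subseteq\W$, so $l_N\in\W$, and Proposition \ref{unifconvergence} gives $\big|\P(\tau^{N,w}>t\,c_N)-G_N(t)\big|\leq\varepsilon_N$ uniformly over $w\in\W$, with $\varepsilon_N\to 0$. Since $\P\big(\tau^{N,l_N}>s\,c_N,\ U^{N,l_N}_{s c_N}\in\W\big)=G_N(s)\big(1-\rho_N(s)\big)$ and $0\leq G_N\leq 1$, combining these bounds gives $\big|G_N(s+t)-G_N(s)G_N(t)\big|\leq\varepsilon_N+2\rho_N(s)=:\eta_N(s,t)\to 0$.

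Finally I extract the limit. Using $G_N(1)=e^{-1}$ and the relation at $s=t=2^{-1}$, $G_N(2^{-1})^2=e^{-1}-\eta_N(2^{-1},2^{-1})$, so $G_N(2^{-1})\to e^{-1/2}$ (taking the nonnegative root); iterating, $G_N(2^{-k})\to e^{-2^{-k}}$ for every $k\geq 1$. Then for each fixed $j,k\geq 1$, applying the relation a bounded number of times, $G_N\big(j\,2^{-k}\big)=G_N(2^{-k})^{\,j}+(\text{error}\to 0)\to e^{-j2^{-k}}$. Thus $G_N(q)\to e^{-q}$ for every dyadic rational $q>0$, and for arbitrary $s>0$ a squeeze using the monotonicity of $G_N$ between dyadic rationals $q_1\uparrow s$ and $q_2\downarrow s$ gives $G_N(s)\to e^{-s}$. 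This proves the proposition. The main obstacle is the core step: one must arrange the conditioning so that the three error sources — the non-concentration probability $\rho_N(s)$, the uniformity gap $\varepsilon_N$ from Proposition \ref{unifconvergence}, and their mixing — are all controlled simultaneously, which in turn rests on the (easily checked) inclusions $\ladder\subseteq S_N^{(0)}$ and $\ladder\subseteq\W$ that make Remark \ref{remark3} and Proposition \ref{unifconvergence} applicable with $l_N$ as starting configuration.
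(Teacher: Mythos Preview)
Your proof is correct and follows essentially the same route as the paper: establish the asymptotic memoryless relation via the Markov property at time $sc_N$, split according to $\W$, control the $\W^c$ part by Remark \ref{remark3} (using $c_N\to+\infty$ from Proposition \ref{cnbound} and $\ladder\subseteq S_N^{(0)}$) and the $\W$ part by Proposition \ref{unifconvergence} (using $\ladder\subseteq\W$), then bootstrap from $G_N(1)=e^{-1}$ to dyadic points and extend by monotonicity. The paper's argument is the same in structure, only phrased through the binary expansion rather than a dyadic squeeze.
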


\begin{proof}
First of all, we will prove that 
for any sequence $(l_N \in \ladder: N\geq 2)$ and
for any pair of positive real numbers $s,t\geq 0$, the following holds
\begin{equation}  \label{explim1}
 \lim_{N \to +\infty}\left\lvert\P\left(\frac{\tau^{N,l_N}}{c_{N}}>s+t \right)-\P\left(\frac{\tau^{N,l_N}}{c_{N}}>s \right)\P\left(\frac{\tau^{N,l_N}}{c_{N}}>t \right) \right\rvert=0.   
\end{equation}


Indeed, for any $N\geq 2$ and for any $l \in \ladder$,
$$
\left\lvert\P\left(\frac{\tau^{N,l}}{c_{N}}>s+t \right)-\P\left(\frac{\tau^{N,l}}{c_{N}}>s \right)\P\left(\frac{\tau^{N,l}}{c_{N}}>t \right) \right\rvert\leq
$$
\begin{equation} \label{eqmeta1}
\sum_{u \in \st\setminus\{\nlist\}}\P\left(U_{c_{N}s }^{N,l}=u,\frac{\tau^{N,l}}{c_{N}}>s\right)\left\lvert\P\left(\frac{\tau^{N,u}}{c_{N}}>t \right)-\P\left(\frac{\tau^{N,l}}{c_{N}}>t \right)\right\rvert. 
\end{equation}
The right-hand side of Equation \eqref{eqmeta1} is equal
$$
\sum_{u \in \W}\P\left(U_{c_{N}s }^{N,l}=u,\frac{\tau^{N,l}}{c_{N}}>s\right)\left\lvert\P\left(\frac{\tau^{N,u}}{c_{N}}>t \right)-\P\left(\frac{\tau^{N,l}}{c_{N}}>t \right)\right\rvert+
$$
$$
\sum_{u \in \st\setminus\{\W \ \cup \ \nlist\}}\P\left(U_{c_{N}s }^{N,l}=u,\frac{\tau^{N,l}}{c_{N}}>s\right)\left\lvert\P\left(\frac{\tau^{N,u}}{c_{N}}>t \right)-\P\left(\frac{\tau^{N,l}}{c_{N}}>t \right)\right\rvert \leq
$$
\begin{equation} \label{eqpropexp2}
\sup_{w \in \W}\left\lvert\P(\tau^{N,l}>c_N t)-\P(\tau^{N,w}>c_N t) \right\rvert+ \P\left(U_{c_{N}s }^{N,l}\in \st\setminus \W,\tau^{N,l}>c_{N}s \right).
\end{equation}
By Theorem \ref{massconcentration}, Remark \ref{remark3} and Propositions \ref{unifconvergence} and \ref{cnbound}, Equation \eqref{eqpropexp2} and the invariance by permutation of the process implies \eqref{explim1}.

By definition, for any $N\geq 2$ and for any $l \in \ladder$,
$$
\P\left(\frac{\tau^{N,l}}{c_{N}} > 1\right)=e^{-1}.
$$
Iterating \eqref{explim1} with $t=s=2^{-n}$, for $n=1,2,\ldots$, we have that for any sequence $(l_N \in \ladder: N\geq 2)$,
\begin{equation*} 
\P\left(\frac{\tau^{N,l_N}}{c_{N}} > 2^{-n}\right) \to e^{-2^{-n}}, \text{ as } N \to +\infty.
\end{equation*}
More generally, we have that for any
$$
t \in \left\{\sum_{n=1}^{m}b(n)2^{-n}: b(n) \in \{0,1\}, n=1,...,m, m \geq 1\right\}
$$
is valid that
\begin{equation}
\P\left(\frac{\tau^{N,l_N}}{c_{N}} > t\right) \to e^{-t}, \text{ as } N \to +\infty.
\label{convergence}
\end{equation}
Any real number $r \in (0,1)$ has a binary representation
$$
r= \sum_{n=1}^{+\infty}b(n)2^{-n},
$$
where for any $n\geq 1$, $b(n) \in \{0,1\}$. Therefore, 
the monotonicity of 
$$
t \to \P\left(\frac{\tau^{N,l_N}}{c_{N}} > t\right)
$$
implies that the convergence in \eqref{convergence} is valid for any $t \in (0,1)$. Moreover, for any positive integer $n\geq 1$,
Equation \eqref{explim1} implies that
\begin{equation*}
    \P\left(\frac{\tau^{N,l_N}}{c_{N}} > n\right) \to e^{-n}, \text{ as } N \to +\infty.
\end{equation*}
We conclude that \eqref{convergence} is valid for any $t >0$.
\end{proof}

\begin{remark} \label{remark2} For any $N \geq 2$ and for any $l_N \in \ladder$, the function $f_{N}:[0,+\infty) \to [0,1]$ given by
$$
f_{N}(t)=\P\left(\frac{\tau^{N,l_N}}{c_{N}} > t \right)
$$
is monotonic. Also, by Proposition \ref{explim2}, it converges pointwise as $N \to +\infty$ to a continuous function. Therefore, for any $(\epsilon_N:N \geq 2)$ such that $\displaystyle\lim_{N\to +\infty}\epsilon_N=0$ , for any $t>0$ and for any sequence $(l_N \in \ladder: N\geq 2)$, we have
$$
\lim_{N \to +\infty}\P\left(\frac{\tau^{N,l_N}}{c_{N}} >t+\epsilon_{N} \right)=\lim_{N \to +\infty}\P\left(\frac{\tau^{N,l_N}}{c_{N}} >t-\epsilon_{N} \right)=e^{-t}.
$$
\end{remark}

To prove Theorem \ref{metastable}, we need the two following lemmas.

\begin{lemma} \label{lemmasup}
For any $t>0$,
$$
\lim_{N\to +\infty}\sup_{u \in \st \setminus \{\nlist\}}\P\left(\frac{\tau^{N,u}}{c_{N}} > t\right)\leq e^{-t}.
$$
\end{lemma}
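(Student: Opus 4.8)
The plan is to reduce the supremum over all non-null lists to the extinction time started from a ladder list, which is governed by Proposition \ref{explim2}: I will show that from an arbitrary $u\in\st\setminus\{\nlist\}$ the process reaches $S_N^{(0)}$ (or gets trapped) in a time negligible with respect to $c_{N}$, and that from $S_N^{(0)}$ it reaches $\ladder$ in a further negligible time. The hard part is this first reduction, which must be uniform over all non-null lists — in particular over lists with very few active neurons, which are far from $S_N^{(0)}$ — so Theorem \ref{massconcentration} and Corollary \ref{coro3} cannot be invoked directly.

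That first step rests on two elementary facts. If $u\neq\nlist$, then applying $\pi^{b,*}$ to an active neuron $b$ yields a configuration in which the other $N-1$ neurons all have positive membrane potential; since $N-1\geq\lfloor N^{1/2}\rfloor$, this configuration lies in $S_N^{(0)}$. Also, from any configuration other than $\nlist$ the total jump rate is at least $e+1$, because at least one active neuron contributes a spike term $\geq e$ and a leakage term $1$. Setting $W^{N,u}=\inf\{t\geq 0:U_t^{N,u}\in S_N^{(0)}\cup\{\nlist\}\}$, before this time the process visits only configurations with between $1$ and $\lfloor N^{1/2}\rfloor-1$ active neurons; each leakage of an active neuron decreases that number by one and the first spike sends the process to $S_N^{(0)}$, so $W^{N,u}$ is reached after at most $N$ jumps and is stochastically dominated by $\sum_{i=1}^{N}\xi_i^{\{e+1\}}$, uniformly in $u$. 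By Proposition \ref{cnbound}, $c_{N}$ grows faster than any polynomial, so I may fix a sequence $\epsilon_N\to 0$ with $c_{N}\epsilon_N/N\to+\infty$; Markov's inequality then gives $\sup_{u\neq\nlist}\P(W^{N,u}>c_{N}\epsilon_N)\to 0$.

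Now fix $t>0$. By the strong Markov property at $W^{N,u}$, for all $N$ large enough that $\epsilon_N<t$,
$$
\P\left(\tau^{N,u}>c_{N}t\right)\leq\P\left(W^{N,u}>c_{N}\epsilon_N\right)+\sup_{v\in S_N^{(0)}}\P\left(\tau^{N,v}>c_{N}(t-\epsilon_N)\right),
$$
since on $\{W^{N,u}\leq c_{N}\epsilon_N\}$ the configuration at time $W^{N,u}$ is either $\nlist$, in which case $\tau^{N,u}=W^{N,u}\leq c_{N}\epsilon_N<c_{N}t$, or a list $v\in S_N^{(0)}$, in which case $\tau^{N,u}=W^{N,u}+\tau^{N,v}$. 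For the second term, put $R^{N,v}=\inf\{T_n:U_{T_n}^{N,v}\in\ladder\}$; by Corollary \ref{coro3} we have $\sup_{v\in S_N^{(0)}}\P(R^{N,v}>t'_N)\to 0$ with $t'_N\to 0$, and on $\{R^{N,v}\leq t'_N\}$ the configuration at time $R^{N,v}$ is a ladder list (hence $R^{N,v}<\tau^{N,v}$); applying the strong Markov property again,
$$
\sup_{v\in S_N^{(0)}}\P\left(\tau^{N,v}>c_{N}(t-\epsilon_N)\right)\leq\sup_{v\in S_N^{(0)}}\P\left(R^{N,v}>t'_N\right)+\sup_{l\in\ladder}\P\left(\tau^{N,l}>c_{N}(t-\epsilon_N)-t'_N\right).
$$

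Finally, by invariance under permutations the last supremum equals $\P(\tau^{N,l_N}>c_{N}(t-\epsilon_N)-t'_N)$ for any choice $l_N\in\ladder$, that is $\P\big(\tau^{N,l_N}/c_{N}>t-\epsilon_N-t'_N/c_{N}\big)$; since $\epsilon_N+t'_N/c_{N}\to 0$, Proposition \ref{explim2} and Remark \ref{remark2} imply that this tends to $e^{-t}$. Combining the three displays yields $\limsup_{N\to+\infty}\sup_{u\in\st\setminus\{\nlist\}}\P(\tau^{N,u}/c_{N}>t)\leq e^{-t}$, which is the assertion. As noted, the delicate point is the uniformity in the first step; everything after it is bookkeeping with the strong Markov property together with Corollary \ref{coro3} and Proposition \ref{explim2}.
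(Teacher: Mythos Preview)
Your proof is correct. The paper takes a somewhat different route to handle the uniformity over all non-null lists, which you rightly identify as the crux. Instead of your two-stage reduction (first an elementary argument that from any $u\neq\nlist$ one spike lands the process in $S_N^{(0)}$, then Corollary~\ref{coro3} to reach $\ladder$), the paper goes in one step from an arbitrary $u$ to $\ladder$ by invoking Corollary~\ref{coro2} together with Remark~\ref{remark1}: Corollary~\ref{coro2} gives a uniform-in-$u$ bound for the auxiliary process $\tilde U$ to hit $\ladder$ by time $t'_N=N^{1/2}+N^{-1/4}+N^{-2}+e^{-(N-N^{1/4})}+e^{-(N-N^{1/2})}$, and Remark~\ref{remark1} transfers this to the event $\{\min(\tau^{N,u},\inf\{T_n:U_{T_n}^{N,u}\in\ladder\})\leq t'_N\}$. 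The rest of the paper's argument is then the same strong-Markov bookkeeping you do, using Proposition~\ref{cnbound} to ensure $t'_N/c_N\to 0$ and Remark~\ref{remark2} to pass to the limit.

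What your approach buys is that the first reduction is entirely self-contained and does not route through the auxiliary process: the observation that a single spike produces $N-1\geq\lfloor N^{1/2}\rfloor$ active neurons, together with the crude bound of at most $N$ jumps each at rate $\geq e+1$, already gives the needed uniform control. What the paper's approach buys is brevity, since Corollary~\ref{coro2} was already established and covers all of $\st\setminus\{\nlist\}$ at once. Both arguments ultimately rely on $c_N$ growing faster than the polynomial time needed to reach $\ladder$, which is Proposition~\ref{cnbound}.
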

\begin{proof}
For any $u \in \st \setminus \ladder$ and for any $N \geq 2$, consider the event
$$
E_{N,u}=\big\{\min\{\tau^{N,u}, \inf\{T_n: U_{T_n}^{N,u} \in \ladder\}\} \leq t'_N\big\},
$$
where $t'_N=N^{1/2}+N^{-1/4}+N^{-2}+e^{-(N-N^{1/4})}+e^{-(N-N^{1/2})}$.
We have that 
\begin{equation}\label{eqsup}
\P\left(\frac{\tau^{N,u}}{c_{N}} > t\right) = \P\left(\frac{\tau^{N,u}}{c_{N}} > t,E_{N,u}, \inf\{T_n: U_{T_n}^{N,u} \in 
\ladder\}<\tau^{N,u}\right)+
\end{equation}
$$
\P\left(\frac{\tau^{N,u}}{c_{N}} > t,E_{N,u}, \tau^{N,u}<\inf\{T_n: U_{T_n}^{N,u} \in 
\ladder\}\right)+\P\left(\frac{\tau^{N,u}}{c_{N}} > t,E_{N,u}^c\right).
$$
By Proposition \ref{cnbound}, there exists $N_t>0$ such that for any $N > N_t$, we have that $c_{N}t> t'_N$. This implies that, for any $ N> N_t$ and for any $u \in \st \setminus \ladder$,
$$
\P\left(\frac{\tau^{N,u}}{c_{N}} > t,E_{N,u}, \tau^{N,u}< \inf\{T_n: U_{T_n}^{N,u} \in \W\}\right)=0.
$$
Considering $l \in \ladder$, for any $u \in \st \setminus\{\ladder \cup \nlist\}$ and for any $N>N_t$, the left-hand side of Equation \eqref{eqsup} is bounded
above by
\begin{equation}\label{equb}
\P\left(\frac{\tau^{N,l}}{c_{N}} > t-\frac{t'_N}{c_{N}} \right)
+\P(E_{N,u}^c). 
\end{equation}
By Remark \ref{remark1} and Corollary \ref{coro2}, it follows that
$$
\lim_{N \to +\infty}\sup_{u \in \st \setminus\{\nlist\}}\P\left(E_{N,u}^c\right)=0.
$$
By Proposition \ref{cnbound}, it follows that 
$$
\lim_{N\to +\infty}\frac{t'_N}{c_{N}}=0.
$$
Therefore, by Remark \ref{remark2} we have that
$$
\lim_{ N\to +\infty}\sup_{l \in \ladder}\P\left(\frac{\tau^{N,l}}{c_{N}} > t-\frac{t'_N}{c_{N}} \right)
=e^{-t}.
$$
We conclude the proof by noting that the limits in the last equation do not depend on $u$.

\end{proof}

\begin{lemma} \label{prop7}
There exists $\alpha\in (0,1)$ and $N_{\alpha}>0$ such that for any $N > N_{\alpha}$ and any $l \in \ladder$, the following upperbound holds
$$
\P\left(\frac{\tau^{N,l}}{c_{N}} > n\right) \leq \alpha^n,
$$
 for any positive integer $n\geq 1$.
\end{lemma}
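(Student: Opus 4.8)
The plan is to derive a sub-multiplicativity property of the tail of $\tau^{N,\cdot}$ from the Markov property and then feed it the uniform bound of Lemma \ref{lemmasup}. Define, for $N \geq 2$ and $n \geq 1$,
$$
q_n(N) := \sup_{u \in \st \setminus \{\nlist\}} \P\!\left(\tau^{N,u} > n\,c_N\right).
$$
First I would establish $q_{n+1}(N) \leq q_1(N)\,q_n(N)$. Fix $u \in \st\setminus\{\nlist\}$ and apply the Markov property at time $c_N$: on $\{\tau^{N,u} > c_N\}$ the process sits at some non-null list $U_{c_N}^{N,u}=v$, and conditionally the probability that $\nlist$ is not reached during $(c_N,(n+1)c_N]$ is $\P(\tau^{N,v}>n c_N)$ (note $\P(\tau^{N,\nlist}>s)=0$ for $s>0$, so only non-null $v$ contribute). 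Hence
$$
\P\!\left(\tau^{N,u} > (n+1)c_N\right) = \sum_{v \neq \nlist} \P\!\left(U_{c_N}^{N,u}=v,\ \tau^{N,u}>c_N\right)\P\!\left(\tau^{N,v}>n c_N\right) \leq q_n(N)\,\P\!\left(\tau^{N,u}>c_N\right) \leq q_n(N)\,q_1(N).
$$
Taking the supremum over $u$ gives the claimed inequality, and iterating yields $q_n(N) \leq q_1(N)^n$ for every $n \geq 1$.

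Next I would invoke Lemma \ref{lemmasup} with $t=1$, which says $\displaystyle\lim_{N\to+\infty}\sup_{u\in\st\setminus\{\nlist\}}\P(\tau^{N,u}/c_N>1)\le e^{-1}$, i.e. $\limsup_{N\to+\infty} q_1(N) \leq e^{-1}$. Choose any $\alpha \in (e^{-1},1)$. By definition of $\limsup$ there is $N_\alpha>0$ such that $q_1(N) \leq \alpha$ for all $N > N_\alpha$. Combining this with $q_n(N) \leq q_1(N)^n$, for every $N > N_\alpha$, every $l \in \ladder$ and every integer $n \geq 1$,
$$
\P\!\left(\frac{\tau^{N,l}}{c_N} > n\right) \leq q_n(N) \leq q_1(N)^n \leq \alpha^n,
$$
which is exactly the asserted bound. (For $n=1$ this is also immediate from $\P(\tau^{N,l}/c_N>1)=e^{-1}\le\alpha$, the first equality being the definition of $c_N$ together with permutation invariance.)

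This is a soft argument, so I do not expect a genuine obstacle; the only points that need care are the bookkeeping in the Markov-property identity — that the sum runs over non-null lists because $\nlist$ is absorbing — and the observation that the supremum defining $q_1(N)$ is taken over \emph{all} non-null initial lists, which is precisely the quantity Lemma \ref{lemmasup} controls uniformly in $N$. All the analytic content (the uniform tail estimate and the lower bound $c_N\to+\infty$) has already been isolated in Lemma \ref{lemmasup} and Proposition \ref{cnbound}.
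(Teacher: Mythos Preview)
Your proof is correct and follows essentially the same route as the paper: both invoke Lemma~\ref{lemmasup} at $t=1$ to obtain $\sup_{u\neq\nlist}\P(\tau^{N,u}/c_N>1)\le\alpha$ for large $N$, and then iterate a one-step Markov decomposition to turn this into the geometric bound $\alpha^n$. The only cosmetic difference is that you split at time $c_N$ and work with the supremum $q_n(N)$ over all non-null initial lists, whereas the paper splits at time $(n-1)c_N$ and tracks only the ladder initial condition; both yield the same recursion.
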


\begin{proof}
By Lemma \ref{lemmasup}, for any fixed $\alpha \in (e^{-1},1)$,  there exists $N_{\alpha}$ such that for all $N>N_{\alpha}$, 
\begin{equation} \label{p7eq1}
\sup_{u \in \st \setminus \{\nlist\}}\P\left(\frac{\tau^{N,u}}{c_{N}}>1 \right)\leq \alpha <1.
\end{equation}
 For any $l \in \ladder$ and for any $n \in \{2,3,\ldots\}$,
$$
\P\left(\frac{\tau^{N,l}}{c_{N}}>n \right)=\sum_{u \in \st \setminus\{\nlist\}} \P\left(\frac{\tau^{N,l}}{c_{N}}>n-1,U_{c_{N}(n-1)}^{N,l}=u \right)\P\left(\frac{\tau^{N,u}}{c_{N}}>1\right).
$$
Equation \eqref{p7eq1} implies that for any $N >N_{\alpha}$,
\begin{equation}\label{indu}
\P\left(\frac{\tau^{N,l}}{c_{N}}>n\right)\leq 
\alpha\P\left(\frac{\tau^{N,l}}{c_{N}}>n-1\right).
\end{equation}
We finish the proof by iterating \eqref{indu}.
\end{proof}

\begin{proof} We will now prove Theorem \ref{metastable}.

First of all, we will prove that for any sequence $(l_N \in \ladder: N\geq 2)$, the following holds
\begin{equation} \label{p1t3}
\frac{\tau^{N, l_N}}{\mathbb{E}[\tau^{N, l_N}]}\to \text{Exp}(1) \text{ in distribution, as } N \to +\infty.
\end{equation}
Considering Proposition  \ref{explim2}, we only need to show that
$$
\lim_{N \to +\infty}\frac{\mathbb{E}[\tau^{N,l_N}]}{c_{N}}=1.
$$
Actually, 
$$
\lim_{N \to +\infty}\frac{\mathbb{E}[\tau^{N,l_N}]}{c_{N}}=\lim_{ N\to +\infty}\int_0^{+\infty}\P(\tau^{N,l_N}>c_{N}s)ds.
$$
Lemma \ref{prop7} and the Dominated Convergence Theorem, allow us to put the limit inside the integral in the last term
$$
\lim_{N \to +\infty}\int_0^{+\infty}\P(\tau^{N,l_N}>c_{N}s)ds=\int_0^{+\infty}\lim_{N \to +\infty}\P(\tau^{N,l_N}>c_{N}s)ds = \int_0^{+\infty}e^{-s}ds.
$$
This and Proposition \ref{explim2} imply \eqref{p1t3}.

For any $N \geq 2$, for any $u \in S_N^{(0)}$ and for any $s>0$,
$$
\P(\tau^{N,u}> c_{N}s)=
\P(\tau^{N,u}> c_{N}s , E_{N,u})+\P(\tau^{N,u}> c_{N}s, E_{N,u}^c),
$$
where
$$
E_{N,u}=
\{\inf\{t:U_t^{N,u} \in \ladder\}\leq 1\}.
$$
For any $l \in \ladder$, by Markov property and the invariance by permutation of the process we have
\begin{equation} \label{eqt3}
\P\left(\frac{\tau^{N,l}}{c_N}> s
\right)\P(E_{N,u})\leq \P(\tau^{N,u}> c_{N}s , E_{N,u}) \leq \P\left(\frac{\tau^{N,l}}{c_N}> s-\frac{1}{c_N}\right)\P(E_{N,u}).
\end{equation}
By Corollary \ref{coro3},
$$
\lim_{N \to +\infty}\inf_{u \in S_N^{(0)}}\P(E_{N,u})=1,
$$
and then, for any sequence $(u_N \in S_N^{(0)}: N\geq 2)$,
$$
\lim_{N \to +\infty}\P(\tau^{N, u_N}> c_Ns , E_{N,u_N})=\lim_{N \to +\infty}\P(\tau^{N, u_N}> c_Ns).
$$

Proposition \ref{cnbound} and Remark \ref{remark2} implies that for any sequence $(l_N \in \ladder: N\geq 2)$,
$$
\lim_{N \to +\infty} \P\left(\frac{\tau^{N, l_N}}{c_N}> s\right)=\lim_{N \to +\infty}\P\left(\frac{\tau^{N, l_N}}{c_N}> s-\frac{1}{c_N}\right)=e^{-s}.
$$
The conclusion follows from Equation \eqref{eqt3} and by observing that the Dominated Convergence Theorem allow us to replace $c_N$ by $\mathbb{E}[\tau^{N, u_N}]$ as we did to prove that Equation \eqref{p1t3} holds.
\end{proof}

\section{Proof of Theorems \ref{teo: extinction2}, \ref{massconcentration2} and \ref{metastable2}}\label{sec:hatprocess}

In this section we show how the proofs presented before can be modified in order to prove Theorems \ref{teo: extinction2}, \ref{massconcentration2} and \ref{metastable2} concerning the  process $(\hat{U}_t^{N,u})_{t\in [0,+\infty)}$.



We will use the following notation.
Let $\hat{T}_0=0$ and for $n =1,\ldots, \hat{\mathcal{N}}^{N,u}$, let $\hat{T}_n$ denote  the successive jumping times of the process $(\hat{U}_t^{N,u})_{t\in [0,+\infty)}$, namely 
$$
\hat{T}_n= \inf\left\{t>\hat{T}_{n-1}:\hat{U}_t^{N,u} \neq \hat{U}^{N,u}_{\hat{T}_{n-1}}\right\}.
$$


We will now prove Theorem \ref{teo: extinction2}. 
\begin{proof}[Proof of Theorem \ref{teo: extinction2}]
For any $N\geq 2$ and for any $u,u' \in \st \setminus \{\nlist\}$, we have that
\begin{align*}
    &\P\Big(
    \hat{\mathcal{N}}^{N,u}\leq n+N-1+\frac{N(N-1)}{2}\ \Big\vert\ \hat{U}^{N,u}_{\hat{T}_n}=u'\Big) \geq \\ 
    &\P\left(\hat{U}_{\hat{T}_{n+N-1}}^{N,u} \in \ladder \ \Big\vert \ \hat{U}_{\hat{T}_n}^{N,u}=u'\right)\P\left(\hat{\mathcal{N}}^{N,u}\leq n+
    \frac{N^2+N-2}{2}\ \Big\vert\ \hat{U}_{\hat{T}_{n+N-1}}^{N,u} \in \ladder\right).
\end{align*}
Lemma \ref{ladderreaching} still holds for $(\hat{U}_t^{N,u})_{t\in [0,+\infty)}$. Therefore,
$$
\P\left(\hat{U}_{\hat{T}_{n+N-1}}^{N,u} \in \ladder \ \Big\vert \ \hat{U}_{\hat{T}_n}^{N,u}=u'\right) \geq [2(N-1)]^{-(N-1)}.
$$
Note that starting from a ladder list $l \in \ladder$, the process can reach the null list after $N(N-1)/2$ steps. For this, it is sufficient that the process has $N(N-1)/2$ consecutive leakage times in which 
the neuron 
with membrane potential $j$ on the list $l$
is associated to $j$ of these leakage times, for $j=1,\ldots,N-1$. 
Calling
$$
\hat{\epsilon}=\P\left(\hat{\mathcal{N}}^{N,u}\leq n+N-1+\frac{N(N-1)}{2} \ \Big\vert\ \hat{U}_{\hat{T}_{n+N-1}}^{N,u}=l\right),
$$
we conclude that $\hat{\epsilon}>0$.

Using the invariance by permutation of the process and the Markov property as in the proof of Theorem \ref{teo: extinction}, we conclude that for any $u' \in \st$,
$$
\P\Big(\hat{\mathcal{N}}^{N,u}\leq n+N-1+\frac{N(N-1)}{2}\ \Big\vert\ \hat{U}^{N,u}_{\hat{T}_n}=u'\Big) \geq [2(N-1)]^{-(N-1)}\hat{\epsilon}.
$$
The last inequality implies that for any $n\geq 1$,
$$
\P(\hat{\mathcal{N}}^{N,u} \geq n) \leq \P\left(\left(N-1+\frac{N(N-1)}{2}\right) \times \text{Geom}\big([2(N-1)]^{-(N-1)}\hat{\epsilon}\big) \geq n\right),
$$
where $\text{Geom}(r)$ denotes a random variable with geometric distribution assuming values in $\{1,2,\ldots\}$ and with mean $1/r$. 
This implies that $\P(\hat{\mathcal{N}}^{N,u}< +\infty)=1$. We prove that $\P(\hat{\tau}^{N,u}< +\infty)=1$ exactly as we did in the proof of Theorem~\ref{teo: extinction}.
\end{proof}




\begin{proposition} \label{unifconvergence2}
The following holds
$$
\lim_{N \to +\infty}\sup_{t\geq 0}\sup_{w,w' \in \W}\left\lvert\P(\hat{\tau}^{N,w}>t)-\P(\hat{\tau}^{N,w'}>t) \right\rvert=0.
$$
\end{proposition}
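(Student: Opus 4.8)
The plan is to repeat, almost verbatim, the argument of Proposition~\ref{unifconvergence}, replacing the coupling of Section~\ref{sec:auxiliary} by its obvious analogue for the second leakage mechanism. First I would introduce the coupled process $(\hat{U}_t^{N,u'},\hat{U}_t^{N,v'})_{t\geq 0}$ taking values in $\st^2$, with the same spiking maps $\pi^{j,\min}$ and $\pi^{j,\max}$ as in Section~\ref{sec:auxiliary} and with the leakage map replaced by $\hat{\pi}^{j,\dagger}(u,v)=(\hat{\pi}^{a_j^u,\dagger}(u),\hat{\pi}^{a_j^v,\dagger}(v))$; the corresponding generator $\hat{\mathcal{G}}_C$ is obtained from $\mathcal{G}_C$ simply by substituting $\hat{\pi}^{j,\dagger}$ for $\pi^{j,\dagger}$, so it has exactly the same jump rates. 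I would then keep all the notation of Section~\ref{sec:auxiliary}, writing $\hat{\mathcal{N}}^N_C(w,w')$, $\hat{\mathcal{N}}_{\dagger}^N(w,w')$, $\hat{E}_j(w,w')$, $\hat{T}_n(w,w')$ and so on, and record that Remark~\ref{remark0} still holds for $(\hat{U}_t^{N,u})$: indeed $\hat{\mathcal{G}}$ is invariant under permutations of $\N$, and any two ladder lists in $\ladder$ are permutations of one another.

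The key point is that, up to its first leakage time, the coupled $\hat{U}$ process performs only spiking moves, using the maps $\pi^{j,\min}$ and $\pi^{j,\max}$ at precisely the rates prescribed by $\mathcal{G}_C$; hence its pre-leakage evolution has exactly the same law as that of the coupled $U$ process of Section~\ref{sec:auxiliary}. Consequently Lemma~\ref{lemmaE1}, Lemma~\ref{lemmageombound}, Lemma~\ref{couplingtime} and Corollary~\ref{coro} hold verbatim with $\hat{U}$, $\hat{\mathcal{N}}^N_C$, $\hat{\mathcal{N}}_{\dagger}^N$, $\hat{T}_n$ in place of $U$, $\mathcal{N}^N_C$, $\mathcal{N}_{\dagger}^N$, $T_n$: each of these statements concerns only the behaviour of the coupling strictly before the first leakage time, the events $E_j$ involve only $K_n\neq\dagger$, and the lower bound $e^{N-1}$ on the spiking rate of the neuron of maximal potential --- valid because $\max\{w(a):a\in\N\}\geq N-1$ for every $w\in\W$ --- is untouched by the change of leakage mechanism. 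In particular,
$$
\inf_{w,w'\in\W}\P\big(\hat{\mathcal{N}}^N_C(w,w')<e^{N^{1/2}}N^{-2}<\hat{\mathcal{N}}_{\dagger}^N(w,w')\big)\to 1,\qquad N\to+\infty.
$$

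Finally I would run the endgame of the proof of Proposition~\ref{unifconvergence}: on the event $\{\hat{\mathcal{N}}^N_C(w,w')<\hat{\mathcal{N}}_{\dagger}^N(w,w')\}$ the two copies agree up to a permutation of $\N$ before any leakage occurs and, by the permutation-invariant version of Remark~\ref{remark0}, remain so forever, whence $\hat{\tau}^{N,w}=\hat{\tau}^{N,w'}$ on that event. Therefore, for every $t\geq 0$,
$$
\big|\P(\hat{\tau}^{N,w}>t)-\P(\hat{\tau}^{N,w'}>t)\big|\leq\P\big(\hat{\mathcal{N}}^N_C(w,w')\geq\hat{\mathcal{N}}_{\dagger}^N(w,w')\big),
$$
and taking the supremum over $t\geq 0$ and over $w,w'\in\W$ and letting $N\to+\infty$ finishes the proof. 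I do not expect a genuine obstacle here: the step requiring the most care is purely a verification, namely checking that every ingredient of Section~\ref{sec:auxiliary} invoked above depends solely on the dynamics prior to the first leakage --- where the two mechanisms coincide --- and on the permutation invariance of the generator; everything else is a transcription. The same remarks show that Corollary~\ref{coro} and Remark~\ref{remarkladderreach} transfer as well, which is what is needed afterwards for Theorems~\ref{massconcentration2} and \ref{metastable2}.
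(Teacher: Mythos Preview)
Your proposal is correct and follows exactly the approach the paper takes: the paper's own proof simply states that one defines the coupling for $(\hat{U}_t^{N,u},\hat{U}_t^{N,v})$ ``exactly as we did in Section~\ref{sec:auxiliary}'' and then proves the proposition ``exactly as we did with Proposition~\ref{unifconvergence}''. Your write-up is in fact more explicit than the paper's, spelling out why the transfer is legitimate (the pre-leakage dynamics of the two couplings coincide and all the lemmas of Section~\ref{sec:auxiliary} depend only on that regime together with permutation invariance), which is precisely the verification the paper leaves implicit.
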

\begin{proof}

We can define a coupling between processes $(\hat{U}_t^{N,u})_{t\in [0,+\infty)}$ and $(\hat{U}_t^{N,v})_{t\in [0,+\infty)}$ exactly as we did in Section \ref{sec:auxiliary} for the processes $(U_t^{N,u})_{t\in [0,+\infty)}$ and $(U_t^{N,v})_{t\in [0,+\infty)}$. 
Using this coupling, we can prove Proposition \ref{unifconvergence2} 
exactly as we did with  Proposition \ref{unifconvergence}.

\end{proof}

To prove Theorem \ref{massconcentration2},
we define an auxiliary process for $(\hat{U}_t^{N,u})_{t\in [0,+\infty)}$ exactly as we did in Section \ref{sec:teo1} for the processes $(U_t^{N,u})_{t\in [0,+\infty)}$.
Using this auxiliary process, we can prove Theorem \ref{massconcentration2} 
exactly as we did with Theorem \ref{massconcentration}.

Finally, the proof of Theorem \ref{metastable2} can be done exactly as we did with Theorem~\ref{metastable}.



\section*{Acknowledgments}
This work is part of USP project {\em Mathematics, computation, language and the brain} and FAPESP project {\em Research, Innovation and Dissemination Center for Neuromathematics} (grant 2013/07699-0). The author was successively supported  by CAPES fellowship (grant \\ 88887.340896/2019-00), CNPq  fellowship (grant 164080/2021-0) and FAPESP fellowship (grant 2022/07386-0).

I thank Antonio Galves for introducing me to the topic of neuronal networks and metastability.
Finally, I thank two anonymous reviewers for their comments and suggestions.


\bibliographystyle{plainnat}
\bibliography{bibliografia}

\begin{thebibliography}{28}
\providecommand{\natexlab}[1]{#1}
\providecommand{\url}[1]{\texttt{#1}}
\expandafter\ifx\csname urlstyle\endcsname\relax
  \providecommand{\doi}[1]{doi: #1}\else
  \providecommand{\doi}{doi: \begingroup \urlstyle{rm}\Url}\fi

\bibitem[André(2019)]{mandre1}
M.~André.
\newblock A result of metastability for an infinite system of spiking neurons.
\newblock \emph{Journal of Statistical Physics}, 177\penalty0 (5):\penalty0
  984–1008, 10 2019.
\newblock ISSN 1572-9613.
\newblock \doi{10.1007/s10955-019-02402-4}.

\bibitem[André(2022)]{mandre3}
M.~André.
\newblock Convergence of the temporal averages of a metastable system of
  spiking neurons.
\newblock 2022.
\newblock URL \url{https://arxiv.org/abs/2201.12147}.
\newblock Preprint.

\bibitem[André and Planche(2021)]{mandre2}
M.~André and L.~Planche.
\newblock The effect of graph connectivity on metastability in a stochastic
  system of spiking neurons.
\newblock \emph{Stochastic Processes and their Applications}, 131:\penalty0
  292--310, 2021.
\newblock ISSN 0304-4149.
\newblock \doi{https://doi.org/10.1016/j.spa.2020.09.013}.

\bibitem[Baccelli and Taillefumier(2019)]{baccelli1}
F.~Baccelli and T.~Taillefumier.
\newblock Replica-mean-field limits for intensity-based neural networks.
\newblock \emph{SIAM Journal on Applied Dynamical Systems}, 18\penalty0
  (4):\penalty0 1756--1797, 2019.
\newblock \doi{10.1137/19M1243877}.

\bibitem[Baccelli and Taillefumier(2021)]{baccelli2}
F.~Baccelli and T.~Taillefumier.
\newblock The pair-replica-mean-field limit for intensity-based neural
  networks.
\newblock \emph{SIAM Journal on Applied Dynamical Systems}, 20\penalty0
  (1):\penalty0 165--207, 2021.
\newblock \doi{10.1137/20M1331664}.

\bibitem[Baccelli et~al.(2022)Baccelli, Davydov, and Taillefumier]{baccelli3}
F.~Baccelli, M.~Davydov, and T.~Taillefumier.
\newblock Replica-mean-field limits of fragmentation-interaction-aggregation
  processes.
\newblock \emph{Journal of Applied Probability}, page 1–22, 2022.
\newblock \doi{10.1017/jpr.2021.31}.

\bibitem[Brochini et~al.(2016)Brochini, Costa, Abadi, Roque, Stolfi, and
  Kinouchi]{lud}
L.~Brochini, A.~A. Costa, M.~Abadi, A.~C. Roque, J.~Stolfi, and O.~Kinouchi.
\newblock Phase transitions and self-organized criticality in networks of
  stochastic spiking neurons.
\newblock \emph{Scientific Reports}, 6:\penalty0 35831, 2016.
\newblock \doi{10.1038/srep35831}.

\bibitem[Cassandro et~al.(1984)Cassandro, Galves, Olivieri, and
  Vares]{cassandro}
M.~Cassandro, A.~Galves, E.~Olivieri, and M.~E. Vares.
\newblock Metastable behavior of stochastic dynamics: A pathwise approach.
\newblock \emph{Journal of Statistical Physics}, 35:\penalty0 603--634, 01
  1984.
\newblock \doi{10.1007/BF01010826}.

\bibitem[Chariker and Lebowitz(2022)]{lebow}
L.~Chariker and J.~L. Lebowitz.
\newblock Time evolution of a mean-field generalized contact process.
\newblock \emph{Journal of Statistical Mechanics: Theory and Experiment},
  2022\penalty0 (2):\penalty0 023502, 02 2022.
\newblock \doi{10.1088/1742-5468/ac4985}.

\bibitem[De~Masi et~al.(2014)De~Masi, Galves, \text{L{\"o}cherbach}, and
  Presutti]{gl4}
A.~De~Masi, A.~Galves, E.~\text{L{\"o}cherbach}, and E.~Presutti.
\newblock Hydrodynamic limit for interacting neurons.
\newblock \emph{Journal of Statistical Physics}, 158\penalty0 (4):\penalty0
  866–902, 11 2014.
\newblock ISSN 1572-9613.
\newblock \doi{10.1007/s10955-014-1145-1}.

\bibitem[{De Santis} et~al.(2022){De Santis}, Galves, Nappo, and
  Piccioni]{desantis}
E.~{De Santis}, A.~Galves, G.~Nappo, and M.~Piccioni.
\newblock Estimating the interaction graph of stochastic neuronal dynamics by
  observing only pairs of neurons.
\newblock \emph{Stochastic Processes and their Applications}, 2022.
\newblock ISSN 0304-4149.
\newblock \doi{https://doi.org/10.1016/j.spa.2022.03.016}.

\bibitem[Duarte and Ost(2016)]{ostduarte1}
A.~Duarte and G.~Ost.
\newblock A model for neural activity in the absence of external stimuli.
\newblock \emph{Markov Processes and Related Fields}, 22:\penalty0 37--52, 10
  2016.
\newblock URL \url{http://math-mprf.org/journal/articles/id1412/}.

\bibitem[Duarte et~al.(2015)Duarte, Ost, and Rodríguez]{ostduarte2}
A.~Duarte, G.~Ost, and A.~Rodríguez.
\newblock Hydrodynamic limit for spatially structured interacting neurons.
\newblock \emph{Journal of Statistical Physics}, 161:\penalty0 1163--1202, 12
  2015.
\newblock \doi{10.1007/s10955-015-1366-y}.

\bibitem[Duarte et~al.(2019)Duarte, Galves, \text{L{\"o}cherbach}, and
  Ost]{ostduarte3}
A.~Duarte, A.~Galves, E.~\text{L{\"o}cherbach}, and G.~Ost.
\newblock Estimating the interaction graph of stochastic neural dynamics.
\newblock \emph{Bernoulli}, 25\penalty0 (1):\penalty0 771--792, 2019.
\newblock \doi{10.3150/17-BEJ1006}.

\bibitem[Fernández et~al.(2015)Fernández, Manzo, Nardi, and
  Scoppola]{metaest1}
R.~Fernández, F.~Manzo, F.~Nardi, and E.~Scoppola.
\newblock Asymptotically exponential hitting times and metastability: a
  pathwise approach without reversibility.
\newblock \emph{Electronic Journal of Probability}, 20:\penalty0 1--37, 2015.
\newblock \doi{10.1214/EJP.v20-3656}.

\bibitem[Ferrari et~al.(2018)Ferrari, Galves, Grigorescu, and
  \text{L{\"o}cherbach}]{gl2}
P.~A. Ferrari, A.~Galves, I.~Grigorescu, and E.~\text{L{\"o}cherbach}.
\newblock Phase transition for infinite systems of spiking neurons.
\newblock \emph{Journal of Statistical Physics}, 172\penalty0 (6):\penalty0
  1564–1575, 07 2018.
\newblock ISSN 1572-9613.
\newblock \doi{10.1007/s10955-018-2118-6}.

\bibitem[Fournier and L\"ocherbach(2016)]{evafour}
N.~Fournier and E.~L\"ocherbach.
\newblock On a toy model of interacting neurons.
\newblock \emph{Annales de l'Institut Henri Poincaré, Probabilités et
  Statistiques}, 52\penalty0 (4):\penalty0 1844--1876, 2016.
\newblock \doi{10.1214/15-AIHP701}.

\bibitem[Galves and Laxa(2022)]{galveslaxa}
A.~Galves and K.~S. Laxa.
\newblock Fast consensus and metastability in a highly polarized social
  network.
\newblock 2022.
\newblock URL \url{https://arxiv.org/abs/2202.12871}.
\newblock Preprint.

\bibitem[Galves and L\"ocherbach(2016)]{glarticle}
A.~Galves and E.~L\"ocherbach.
\newblock Modeling networks of spiking neurons as interacting processes with
  memory of variable length.
\newblock \emph{Journal de la soci\'et\'e fran\c{c}aise de statistique},
  157\penalty0 (1):\penalty0 17--32, 2016.
\newblock URL \url{http://www.numdam.org/item/JSFS\_2016\_\_157\_1\_17\_0/}.

\bibitem[Galves and \text{L{\"o}cherbach}(2013)]{glmodel}
A.~Galves and E~\text{L{\"o}cherbach}.
\newblock Infinite systems of interacting chains with memory of variable
  length.
\newblock \emph{Journal of Statistical Physics}, 151\penalty0 (6):\penalty0
  896--921, 2013.
\newblock \doi{10.1007/s10955-013-0733-9}.

\bibitem[Galves et~al.(2019)Galves, L\"ocherbach, Pouzat, and Presutti]{gl3}
A.~Galves, E.~L\"ocherbach, C.~Pouzat, and E.~Presutti.
\newblock A system of interacting neurons with short term synaptic
  facilitation.
\newblock \emph{Journal of Statistical Physics}, 178\penalty0 (4):\penalty0
  869–892, 12 2019.
\newblock ISSN 1572-9613.
\newblock \doi{10.1007/s10955-019-02467-1}.

\bibitem[Galves et~al.(2021)Galves, L\"ocherbach, and Pouzat]{gb}
A.~Galves, E.~L\"ocherbach, and C.~Pouzat.
\newblock \emph{{Probabilistic spiking neuronal nets}}.
\newblock Preprint, 2021.
\newblock URL \url{https://hal.archives-ouvertes.fr/hal-03196369}.

\bibitem[Hollander(2009)]{metaest3}
F.~d. Hollander.
\newblock \emph{Three Lectures on Metastability Under Stochastic Dynamics},
  pages 223--246.
\newblock Springer, Berlin, Heidelberg, 2009.
\newblock ISBN 978-3-540-92796-9.
\newblock \doi{10.1007/978-3-540-92796-9\_5}.

\bibitem[L\"ocherbach and Monmarché(2022)]{evamonm}
E.~L\"ocherbach and P.~Monmarché.
\newblock Metastability for systems of interacting neurons.
\newblock \emph{Annales de l'Institut Henri Poincaré, Probabilités et
  Statistiques}, 58\penalty0 (1):\penalty0 343--378, 2022.
\newblock \doi{10.1214/21-AIHP1164}.

\bibitem[Nascimento(2022)]{amarcos}
A.~M.~B. Nascimento.
\newblock Multiple phase transitions for an infinite system of spiking neurons.
\newblock \emph{Journal of Statistical Physics}, 188\penalty0 (4), 2022.
\newblock \doi{10.1007/s10955-022-02931-5}.

\bibitem[Olivieri and Vares(2005)]{metaest2}
E.~Olivieri and M.~E. Vares.
\newblock \emph{Large Deviations and Metastability}.
\newblock Encyclopedia of Mathematics and its Applications. Cambridge
  University Press, Cambridge, 2005.
\newblock \doi{10.1017/CBO9780511543272}.

\bibitem[Yaginuma(2016)]{karina}
K.~Yaginuma.
\newblock A stochastic system with infinite interacting components to model the
  time evolution of the membrane potentials of a population of neurons.
\newblock \emph{Journal of Statistical Physics}, 163:\penalty0 642--658, 05
  2016.
\newblock \doi{10.1007/s10955-016-1490-3}.

\bibitem[Yu and Taillefumier(2022)]{taille}
Luyan Yu and Thibaud~O. Taillefumier.
\newblock Metastable spiking networks in the replica-mean-field limit.
\newblock \emph{PLOS Computational Biology}, 18\penalty0 (6), 06 2022.
\newblock \doi{10.1371/journal.pcbi.1010215}.

\end{thebibliography}

\vspace{1cm}

\noindent
Kádmo de Souza Laxa \\ Faculdade de Filosofia, Ciências e Letras de Ribeirão Preto \\ Universidade de São Paulo \\ Av. Bandeirantes, 3900 \\
Ribeirão Preto-SP, 14040-901 \\ Brazil \\
e-mail address: \texttt{kadmolaxa@hotmail.com}

\end{document}